\documentclass[a4paper]{amsart}

\usepackage{amsmath,amssymb,amsthm,stmaryrd}

\usepackage{color,graphicx}
\usepackage[dvipsnames]{xcolor}
\usepackage{accents}
\usepackage{enumerate}

\usepackage[textsize=footnotesize,color=yellow!40, bordercolor=white]{todonotes}

\newtheorem{theorem}{Theorem}[section]

\newtheorem{lemma}[theorem]{Lemma}
\newtheorem{fact}[theorem]{Fact}

\newcounter{cl}[theorem]
\newtheorem{claim}[cl]{Claim}
\newcounter{scl}[cl]
\newtheorem{subclaim}[scl]{Subclaim}
\newtheorem*{claim*}{Claim}
\newtheorem*{subclaim*}{Subclaim}
\newtheorem{corollary}[theorem]{Corollary}

\theoremstyle{definition}
\newtheorem{definition}[theorem]{Definition}
\newtheorem*{definition*}{Definition}
\newcounter{ca}[cl]
\newtheorem{case}[ca]{Case}
\newcounter{sca}[ca]
\newtheorem{subcase}[sca]{Subcase}

\newcounter{ssca}[sca]

\theoremstyle{remark}
\newtheorem{remark}[theorem]{Remark}
\newtheorem*{remark*}{Remark}

\usepackage{tikz}
\usetikzlibrary{decorations.pathmorphing,shapes}

\usepackage[
    colorlinks=true, 
    citecolor=blue,
    linkcolor=blue,
    urlcolor=blue]{hyperref}
    
\newcommand{\ZFC}{\ensuremath{\operatorname{ZFC}} }
\newcommand{\ZF}{\ensuremath{\operatorname{ZF}} }

\newcommand{\Ord}{\ensuremath{\operatorname{Ord}} }
\newcommand{\HOD}{\ensuremath{\operatorname{HOD}} }
\newcommand{\HC}{\ensuremath{\operatorname{HC}} }
\newcommand{\OD}{\ensuremath{\operatorname{OD}} }

\newcommand{\rng}{\ensuremath{\operatorname{rng}} }

\newcommand{\AD}{\ensuremath{\operatorname{AD}} }
\newcommand{\crit}{\ensuremath{\operatorname{crit}} }

\newcommand{\Col}{\ensuremath{\operatorname{Col}} }
\newcommand{\lh}{\ensuremath{\operatorname{lh}} }
\newcommand{\Ult}{\ensuremath{\operatorname{Ult}} }
\newcommand{\ran}{\ensuremath{\operatorname{ran}} }
\newcommand{\dom}{\ensuremath{\operatorname{dom}} }
\newcommand{\pred}{\ensuremath{\operatorname{pred}} }

\newcommand{\id}{\ensuremath{\operatorname{id}} }

\newcommand{\dirlim}{\ensuremath{\operatorname{dirlim}} }
\newcommand{\str}{\ensuremath{\operatorname{str}} }

\newcommand{\BS}{{}^\omega\omega}

\newcommand{\cQ}{\mathcal{Q}}
\newcommand{\cP}{\mathcal{P}}
\newcommand{\cM}{\mathcal{M}}
\newcommand{\cN}{\mathcal{N}}
\newcommand{\cT}{\mathcal{T}}
\newcommand{\cU}{\mathcal{U}}

\newcommand{\cF}{\mathcal{F}}
\newcommand{\cI}{\mathcal{I}}
\newcommand{\cR}{\mathcal{R}}
\newcommand{\cS}{\mathcal{S}}

\newcommand{\cC}{\mathcal{C}}
\newcommand{\cW}{\mathcal{W}}

\newcommand{\cO}{\mathcal{O}}
\newcommand{\cL}{\mathcal{L}}

\newcommand{\bR}{\mathbb{R}}
\newcommand{\bP}{\mathbb{P}}

\newcommand{\bC}{\mathbb{C}}

\newcommand{\ind}{\ensuremath{\operatorname{index}}}

\newcommand{\Pot}{\mathcal{P}}

\newcommand{\pwimg}{\, "}
\newcommand{\dbar}[1]{\overline{\overline{#1}}}

\begin{document}
\title[The strength of determinacy when all sets are universally Baire]{The consistency strength of determinacy when all sets are universally Baire}

%\subjclass[2010]{03E45, 03E60, 03E25, 03E15} 

%\keywords{Infinite Game, Determinacy, Universally Baire, Inner Model
% Theory, Mouse}

\author{Sandra M\"uller} \address{Sandra M\"uller, Institut f\"ur Diskrete Mathematik und Geometrie, TU Wien, Wiedner Hauptstra{\ss}e 8-10/104, 1040 Wien, Austria.}
\email{sandra.mueller@tuwien.ac.at}
\thanks{The author gratefully acknowledges funding from L'OR\'{E}AL Austria, in collaboration with the Austrian UNESCO Commission and in cooperation with the Austrian Academy of Sciences - Fellowship \emph{Determinacy and Large Cardinals}. In addition, this research was funded in part by the Austrian Science Fund (FWF) [10.55776/V844, 10.55776/Y1498, 10.55776/I6087]. For open access purposes, the author has applied a CC BY public copyright license to any author accepted manuscript version arising from this submission.
   Moreover, the author would like to thank Grigor Sargsyan for helpful discussions at the beginning of this project and the support through his NSF Career Award DMS-1352034 during a research visit at Rutgers University in the fall of 2019. In addition, the author would like to thank Farmer Schlutzenberg, Benjamin Siskind, and Lena Wallner for helpful comments on an earlier version of this article. Finally, the author is grateful for the very helpful comments and suggestions provided by the anonymous referee.}

\subjclass[2010]{03E45, 03E60, 03E55, 03E15} 

\keywords{Determinacy, Inner Model Theory, Universally Baire, Woodin
  Cardinal, Strong Cardinal, Hod Mouse} 

%\date{\today}

\begin{abstract}
  It is known that the large cardinal strength of the Axiom of Determinacy when enhanced with the hypothesis that all sets of reals are universally Baire is much stronger than the Axiom of Determinacy itself. Sargsyan conjectured it to be as strong as the existence of a cardinal that is both a limit of Woodin cardinals and a limit of strong cardinals. Larson, Sargsyan and Wilson used a generalization of Woodin's derived model construction to show that this conjectured result would be optimal. In this paper we introduce a new translation procedure for hybrid mice extending work of Steel, Zhu and Sargsyan and apply it to prove Sargsyan's conjecture.
\end{abstract}
\maketitle
\setcounter{tocdepth}{1}
% \tableofcontents

\section{Introduction}

A central theme in inner model theory is the deep connection between determinacy principles and inner models with large cardinals. One hierarchy of determinacy principles is given by imposing additional structural properties on a model of the Axiom of Determinacy ($\AD$). Examples of such structural properties are ``$\theta_0 < \Theta$'', ``all sets of reals are Suslin'', ``$\Theta$ is regular'', or the Largest Suslin Axiom, see for example \cite{St09, St_ADR, Sa15, SaTr}. Woodin used his famous derived model construction to show that if there is a cardinal $\lambda$ that is a limit of Woodin cardinals and a limit of ${<}\lambda$-strong cardinals, then there is a model of $\text{``}\AD^+ + \text{ all sets of reals are Suslin''},$ see \cite{St09}. Approximations to the converse were proved by Woodin,
and the full converse was proved by Steel \cite{St_ADR}, using Woodin's work together with precursors of the translation procedures we use in this paper. By results of Martin and Woodin, see \cite[Theorems 9.1 and 9.2]{St09}, assuming $\AD$, the statement ``all sets of reals are Suslin'' is equivalent to the Axiom of Determinacy for games on reals ($\AD_\bR$).

Being universally Baire as defined by Feng, Magidor and Woodin \cite{FMW92} is a non-trivial strengthening of being Suslin. Larson, Sargsyan and Wilson showed in \cite{LSW} via an extension of Woodin's derived model construction that if there is a cardinal $\lambda$ that is a limit of Woodin cardinals and a limit of strong cardinals, then there is a model of $\text{``}\AD +$ all sets of reals are universally Baire''. One challenge in their argument is that no model of the form $L(\Pot(\bR))$ is a model of $\text{``}\AD + $ all sets of reals are universally Baire''. So the resulting model of determinacy needs to have non-trivial structure above $\Pot(\bR)$. Sargsyan conjectured that a cardinal $\lambda$ that is a limit of Woodin cardinals and a limit of strong cardinals is the optimal hypothesis to construct a model of $\text{``}\AD + $ all sets of reals are universally Baire''. We prove here that this conjecture is true.
More precisely, we extend Sargsyan's translation procedure from \cite{Sa17} to prove the following theorem. 

\begin{theorem}\label{thm:main}
  Suppose there is a proper class model of the theory ``$\AD +$ all sets of reals are universally Baire''. Then there is a transitive model $\cM$ of $\ZFC$ containing all ordinals such that $\cM$ has a cardinal $\lambda$ that is a limit of Woodin cardinals and a limit of strong cardinals.
\end{theorem}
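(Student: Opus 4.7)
Let $V$ be a proper class model of $\text{``}\AD + \text{all sets of reals are universally Baire''}$. The overall strategy is to extract from $V$ a hybrid hod premouse whose structure reflects the universal Baireness hypothesis, and then apply an extended Steel--Zhu--Sargsyan translation to convert it into a pure extender model $\cM$ of $\ZFC$ exhibiting the desired large cardinals. Because universal Baireness implies Suslinness, $\AD_\bR$ already holds in $V$ by the Martin--Woodin theorem recalled in the introduction. Using Sargsyan's core model induction one organises $\HOD^V$ into a direct limit $\cM_\infty$ of a directed system of iterable hybrid hod mice; under the stronger UB hypothesis the induction pushes past the minimal model of $\AD_\bR + \text{``}\Theta\text{ is regular''}$ through layers where iteration strategies of initial segments are themselves universally Baire and so admit extender-type representations.

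The next step is to read the relevant large cardinals off $\cM_\infty$. The Woodin cardinals of $\cM_\infty$ encode the Suslin cardinals of $V$, giving a proper class of Woodins. The decisive new input is to extract strong cardinals from the universal Baireness of the strategies: the branch predicates attached to layers of $\cM_\infty$, combined with the UB assumption, produce cofinally many ``strategy-strong'' cardinals. One then applies the extended translation procedure (the technical heart of the paper, generalising \cite{Sa17}): each iteration strategy predicate in $\cM_\infty$ is replaced by a coherent sequence of background-certified extenders derived from branches of the strategy, yielding a pure extender premouse $\cM$. One verifies that $\cM$ is fully iterable, that Woodinness is preserved at the appropriate points, and that each strategy-strong cardinal becomes a genuine strong cardinal in $\cM$; taking $\lambda$ to be the supremum of these points yields the desired cardinal.

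The main obstacle is the translation itself. In \cite{Sa17} the translation handles hybrid mice below a short-tree-strategy threshold, and the resulting pure extender models only reach the level of a Woodin limit of Woodins. To produce a cardinal that is simultaneously a limit of Woodins \emph{and} a limit of strongs, the translation must be performed on hod mice whose strategies act across Woodin cardinals, so that the extenders produced from branch condensation genuinely overlap the lower Woodins and witness strongness. The technical heart of the argument is therefore to background-certify these strategy-derived extenders coherently, and to transfer iterability from the hybrid hod premouse to the pure extender model $\cM$ across these overlapping extenders without breaking coherence of the extender sequence.
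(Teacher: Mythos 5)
Your overall architecture does match the paper's: pass from $\AD$ + uB to $\AD_\bR$ via Martin--Woodin, build a direct limit $\cM_\infty$ of hod pairs, form a hybrid strategy structure over it, and translate the strategy information into extenders witnessing strongness while keeping the Woodin cardinals. However, two of your intermediate claims are inaccurate, and the plan defers exactly the content that constitutes the proof.

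First, the assertion that the Woodin cardinals of $\cM_\infty$ ``encode the Suslin cardinals of $V$, giving a proper class of Woodins'' is wrong and also not what is needed. The paper invokes Sargsyan's result to reduce to $\Theta = \theta_\omega$, so $\cM_\infty$ has exactly $\omega$ Woodin cardinals $\delta_i$, one for each $\theta_i$, and the target cardinal is $\lambda = \delta_\omega = \sup_{i<\omega}\delta_i$. There is likewise no core model induction ``pushing past'' the minimal model of $\AD_\bR + \text{``}\Theta$ is regular'': one works directly in the determinacy model, and universal Baireness enters only to (a) yield $\AD_\bR$ and (b) extend the join $\Lambda$ of the component strategies to an $(\Ord,\Ord)$-strategy under which $\cM_\infty$ is generically closed, which is what makes $\cW = L^\Lambda[\cM_\infty]$ a proper class translatable structure. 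It is not used to give the strategies ``extender-type representations'' directly.

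Second, and more seriously, the step ``replace each strategy predicate by a coherent sequence of background-certified extenders derived from branches of the strategy'' is precisely the part of the theorem for which your plan offers no argument, and the paper's solution is not a background certification in the usual sense. The extenders witnessing that $\kappa_i$ (the least ${<}\delta_{i+1}$-strong cardinal of $\cW$ above $\delta_i$) is fully strong have index above $\delta_{i+1}$ and therefore cannot be backgrounded; they are instead required to be \emph{generically countably complete} in $\cW$ and are \emph{certified} by demanding that $\pi_F$ restricted to the local direct limit $\cM_\infty^{i,-}$ (built from $\kappa_i$-suitable premice below $\kappa_i$) agree with the iteration map of the tail strategy $(\Sigma_i)_{\cM_\infty^{i,-}}$. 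Making this work requires: an operator characterization ($S_i$) showing the certified extenders are definable enough to be found inside the model; an iterability proof for countable hulls of $\cM$ that exploits almost linearity of trees on the non-backgrounded extenders together with realization maps supplied by countable completeness; and a comparison of hulls of $\cM$ against the construction showing that the construction converges and that each $\kappa_i$ is fully strong. None of this is present or even sketched in your proposal, so as written it is an accurate table of contents for the proof rather than a proof.
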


%Note: Here we mean $\AD^+$ without $\DC$.

Translation procedures can be used to transform a hybrid strategy mouse (a \emph{hod mouse}) into an ordinary mouse while generating additional large cardinals from the strategy predicate. They originate from constructions in the \emph{core model induction} method due to Woodin. The idea behind this method is to construct new mice with iteration strategies of high desciptive set theoretic complexity. Once this method reaches infinitely many Woodin cardinals, a key step in every core model induction is to pass from a hybrid strategy mouse to an ordinary mouse without loosing its descriptive set theoretic complexity. Here is where translation procedures start coming into play: They translate hybrid strategy mice into ordinary mice while ensuring that the derived models of the original strategy mouse and the resulting ordinary mouse are the same. This is what we mean by \emph{preserving the descriptive set theoretic complexity} here.
In \cite{St08dm}, Steel develops a translation procedure that can be used to prove Woodin's result that \[ \ZF + \AD^+ + \; \theta_0 < \Theta \] is equiconsistent with \begin{eqnarray*} & \ZFC \text{ + } \text{ there is a pair of cardinals } \kappa, \lambda \text{ such that } \\ & \lambda \text{ is a limit of Woodin cardinals and } \kappa \text{ is }{<}\lambda\text{-strong.} \end{eqnarray*}
Steel's translation procedure in \cite{St08dm} originates from work of Closson, Neeman, and Steel and was later extended by Zhu in \cite{Zh15}. Steel's and Zhu's constructions use $\AD^+$ theory and work in a specific hod mouse, more specifially, they use a real parameter for the translation. Therefore, their constructions have the disadvantage that the translation cannot be done in $\HOD$ itself but only in $\HOD$ of a real.

This was improved by Sargsyan in \cite{Sa17}, where he developed a translation procedure that works locally and constructs a model with infinitely many Woodin cardinals and one strong cardinal. Sargsyan's construction does not depend on the choice of a specific hod mouse or a fixed real parameter and can therefore be done in $\HOD$ itself. However, it was open how to continue Sargsyan's translation after the first level reaching a strong cardinal. We extend Sargsyan's work and push it through infinitely many levels by developing a new translation procedure that allows us to construct a model with a cardinal that is both a limit of Woodin cardinals and a limit of strong cardinals, cf.~Definition \ref{def:translationprocedure}. The model $\cM$ we construct will in fact be a countably iterable premouse. Note that the premouse obtained from Sargsyan's translation in \cite{Sa17} was not known to be iterable.

\begin{theorem}\label{thm:maintranslation}
  Let $\cW$ be a translatable structure as in Definition \ref{def:translatablestructure} and let $\cM$ be the result of an $\cM$-construction in $\cW$ as in Definition \ref{def:translationprocedure}. Then $\cM$ is a countably iterable proper class premouse with a cardinal $\lambda$ that is a limit of Woodin cardinals and a limit of strong cardinals.
\end{theorem}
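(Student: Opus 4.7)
The argument has three main parts: verify that the $\cM$-construction succeeds and yields a proper class premouse; locate the Woodin and strong cardinals of $\cM$ cofinally below the target $\lambda$; and establish countable iterability. I would treat these in turn.

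First, I would check that the $\cM$-construction of Definition~\ref{def:translationprocedure} never breaks down. Each stage either extends by an extender certified by a background extender of $\cW$, performs a rudimentary closure step, or translates a coherent block of strategy information into an extender on the $\cM$-sequence in the style of Steel, Zhu and Sargsyan. The standard fine-structural checks, namely the initial segment condition, coherence, and absence of premature projectum collapse, proceed essentially as in~\cite{Sa17}, with one new case handling the strong-cardinal-generating step built into our procedure. That the construction reaches $\Ord$ follows from the class-many background extenders and strategy blocks available in $\cW$ under Definition~\ref{def:translatablestructure}, together with the fact that the translation never packs so much information below a single ordinal as to stall the construction.

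Next, I would identify the large cardinals of $\cM$. The Woodin cardinals of $\cM$ are inherited from those of $\cW$ via background certification: the extenders on the $\cM$-sequence agree with restrictions of background extenders enough to reflect Woodinness, as in Steel's argument that the $K^c$-construction preserves Woodin cardinals. The strong cardinals of $\cM$ come from the strategy predicate itself: each strategy-derived block, once translated, contributes a coherent family of $\cM$-extenders witnessing that some $\cM$-cardinal below $\lambda$ is strong. The defining properties of a translatable structure guarantee that both kinds of blocks appear cofinally below $\lambda^\cW$, so their images under the translation appear cofinally below $\lambda$ in $\cM$; hence $\lambda$ is simultaneously a limit of Woodin cardinals and of strong cardinals in $\cM$.

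Finally, for countable iterability, let $\pi\colon \bar\cM \to \cM$ be the uncollapse of a countable elementary substructure and let $\cT$ be a countable putative iteration tree on $\bar\cM$. I would lift $\cT$ to a tree $\cT^*$ on (a countable hull of) $\cW$. Each extender used on $\cT$ is either extender-type, in which case the standard background lifting applies, or strategy-type, in which case its use must be absorbed through the strategy predicate of $\cW$ via branch condensation, extending the arguments of~\cite{Sa17, Zh15}. The heart of the proof is precisely this mixed lifting: one must verify that the copied tree remains an iteration tree for $\cW$'s strategy even when $\cT$ crosses strong-cardinal-generating levels, and that cofinal wellfounded branches for $\cT$ descend from those supplied by iterability of $\cW$. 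Controlling this interaction is what the new translation procedure is engineered to permit, and it is the main improvement over~\cite{Sa17}, where iterability was not known.
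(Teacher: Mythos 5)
There is a genuine gap: your outline names the right three components, but the mechanisms you propose for each are not the ones that make the construction work, and the two hardest steps are missing entirely. First, the extenders witnessing strongness of the $\kappa_i$ are not ``translated strategy blocks''; by Definition \ref{def:translationprocedure} they are \emph{generically countably complete} extenders of $\cW$ (Definition \ref{def:genctblecompleteness}) that happen to cohere the current stage of the construction, and the strategy enters only through the \emph{certification} of such extenders: their restriction to the direct limit $\cM_\infty^{i,-}$ of $\Sigma_i$-iterates must agree with the iteration map of the tail strategy, equivalently they must be characterized by the operators $S_i$ (Lemmas \ref{lem:genctblycompleteextenderscertified} and \ref{lem:certifiedextendersaregivenbyoperators}). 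Consequently your iterability argument is aimed at the wrong target: a countable putative tree on $\bar\cM$ cannot be handled by ``absorbing strategy-type extenders through the strategy predicate via branch condensation''. The paper's Lemma \ref{lem:iterability} instead shows that every normal tree on $\cM$ is \emph{almost linear on strongness witnesses} (Lemma \ref{lem:iterationtreealmostlinear}, following Schindler), splits the tree into blocks, lifts the backgrounded blocks to $\cW$, and uses generic countable completeness to build \emph{realization maps} $\tau \colon \Ult(\bar\cM^1,E) \rightarrow \cM^1$ for each non-backgrounded extender (Fact \ref{fact:realization}); without countable completeness there is no way to continue the copying past such an extender.

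Second, both the convergence of the construction and the full strongness of the $\kappa_i$ rest on a comparison-against-a-hull argument (Lemma \ref{lem:CompAgainstHull}) that your proposal does not contain. Convergence does not follow from ``class-many background extenders'': one must show $\bC_\omega(\cN_\xi)$ is defined at stages where a non-backgrounded extender is added, and this is done by iterating a hull $\bar\cM$ into an initial segment of $\cM$ with critical point above $\kappa_i$, using certification to see that no disagreement can arise (Lemma \ref{lem:constructionconverges}). Likewise, $\kappa_i$ is only ${<}\delta_{i+1}$-strong in $\cW$, so the cofinal supply of witnesses you invoke does not exist a priori; Lemma \ref{lem:kappaiarestrong} manufactures them by taking a hull $\bar\cW$ of size ${<}\delta_{i+1}$, pulling back an extender $F$ on $\cM$ indexed between $i(\bar\gamma)$ and $\delta_{i+1}$ along the comparison map to an extender $E$ over $\bar\cM$, and then proving---via the operator characterization, $\cP$-constructions, and genericity iterations---that $E$ coheres $\bar\cM$, is generically countably complete, and lies in $\bar\cW$, so that it must appear on the $\bar\cM$-sequence, contradicting the failure of strongness. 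These coherence and definability arguments are the core of the theorem and are absent from your plan.
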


The amount of iterability we obtain for the translated premouse $\cM$ is in fact a bit stronger than what is stated in Theorem \ref{thm:maintranslation}, see Lemma \ref{lem:iterability}.

% Note that ``$\AD^+ +$ all sets of reals are universally Baire'' implies $\AD_\bR$.
By combining our work here with the results in \cite{LSW} we obtain the following corollary.

\begin{corollary}
  The following theories are equiconsistent.
  \begin{description}
  \item[$T_1$] $\ZF + \AD +$ all sets of reals are universally Baire.
  \item[$T_2$] $\ZFC +$ there is a limit of Woodin cardinals that is a limit of strong cardinals.
  \end{description}
\end{corollary}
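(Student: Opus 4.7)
The plan is to prove the corollary by combining two inputs: the forward direction comes from \cite{LSW}, and the reverse direction comes from Theorem \ref{thm:main} proved in this paper. Since both implications are (once the two sources are available) direct, the corollary itself is a one-line combination, but I will spell out both halves.

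For the direction $\operatorname{Con}(T_2) \Rightarrow \operatorname{Con}(T_1)$, I would appeal to the main result of \cite{LSW}. Assuming a ground universe in which there is a cardinal $\lambda$ that is simultaneously a limit of Woodin cardinals and a limit of strong cardinals, Larson, Sargsyan and Wilson perform an extension of Woodin's derived model construction to produce a model of $\ZF + \AD +$ ``all sets of reals are universally Baire''. The subtlety that no model of the form $L(\cP(\bR))$ can satisfy this combined theory is handled in their argument by a refined choice of the class of sets of reals to close under, and I would simply cite their theorem.

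For the reverse direction $\operatorname{Con}(T_1) \Rightarrow \operatorname{Con}(T_2)$, I would invoke Theorem \ref{thm:main}. From the consistency of $T_1$ one passes, in the usual metamathematical way, to a universe in which there is a proper class model of $T_1$; Theorem \ref{thm:main} then produces a transitive model $\cM \models \ZFC$ containing all ordinals with a cardinal $\lambda$ that is both a limit of Woodin cardinals and a limit of strong cardinals. This $\cM$ witnesses $T_2$, so $\operatorname{Con}(T_2)$ follows.

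The main obstacle is not located in this corollary but in its second input: constructing the iterable proper class premouse $\cM$ with the required large cardinals from a $T_1$-model requires the new translation procedure of Definition \ref{def:translationprocedure}, which extends the work of Steel, Zhu and Sargsyan and is the principal technical contribution of the paper. Once Theorem \ref{thm:main} and \cite{LSW} are in hand, nothing further is needed for the corollary.
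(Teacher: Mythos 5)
Your proposal is correct and matches the paper exactly: the paper derives the corollary by the same one-line combination, citing \cite{LSW} for $\operatorname{Con}(T_2)\Rightarrow\operatorname{Con}(T_1)$ and Theorem \ref{thm:main} for the converse, with all the real work residing in the translation procedure behind Theorem \ref{thm:main}. The only point worth noting is that passing from $\operatorname{Con}(T_1)$ to a proper class model of $T_1$ deserves a word (the paper's remarks on set-sized translatable structures are what licenses this), but your parenthetical acknowledgment of this is in line with how the paper treats it.
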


As a corollary to our proof we obtain a proof of Steel's unpublished result \cite{St_ADR} on the exact consistency strength lower bound for $\AD_\bR$ mentioned at the beginning of this introduction. We thank John Steel for his permission to include it here.

\begin{corollary}[Steel]
    Suppose there is a proper class model of $\AD_\bR$. Then the $\AD_\bR$-hypothesis is consistent, i.e., there is a transitive model $\cM$ of $\ZFC$ containing all ordinals such that $\cM$ has a cardinal $\lambda$ that is a limit of Woodin cardinals and a limit of ${<}\lambda$-strong cardinals.
\end{corollary}

The corollary follows from our argument by observing that Sargsyan's \cite{Sa09, Sa15} yields a hod mouse with a limit of Woodin cardinals $\lambda$ and strategies up to $\lambda$ from $\AD_\bR$. Restricting the translation procedure of this article to strategies below $\lambda$ results in a transitive model $\cM$ of $\ZFC$ containing all ordinals such that $\cM$ has a cardinal $\lambda$ that is a limit of Woodin cardinals and a limit of ${<}\lambda$-strong cardinals. Steel's original unpublished proof was different from ours. He, assuming $\AD_\bR$, Prikry-forced a hod mouse (in the sense of \cite{Sa09}) $H$ whose derived model in $V$ and then translated it into a model of the $\AD_\bR$-hypothesis whose derived model is still $V$. His proof that the translation works used that certain mouse operators are equivalent on a cone, and that the bases of those cones have been made generic over $H$ at the appropriate places. 

\section{Preliminaries and notation}

We start with the definition of universally Baire sets of reals; see, for example, \cite[Definition 32.21]{Je03} or \cite[Definition 8.6]{Sch14}.

  \begin{definition}
    Let $(S,T)$ be trees on $\omega \times \kappa$ for some ordinal $\kappa$ and let $Z$ be any set. We say \emph{$(S,T)$ is $Z$-absolutely complementing} iff \[ p[S] = \BS \setminus p[T] \] in every $\Col(\omega,Z)$-generic extension of $V$.
  \end{definition}
  \begin{definition}[Feng-Magidor-Woodin, \cite{FMW92}]\label{def:uB}
    A set of reals $A$ is \emph{universally Baire (uB)} if for every $Z$, there are $Z$-absolutely complementing trees $(S,T)$ with $p[S] = A$.
  \end{definition}

  We assume that the reader is familiar with the basic concepts of inner model theory and refer to \cite{St10} for an overview. But following \cite{Sa17} we use a slight variant of Mitchell-Steel indexing for the extenders on the sequence of our premice. 
  We say an ordinal $\xi$ is a \emph{$0$-weak cutpoint} in a premouse $\cM$ iff $\xi$ is a cutpoint in $\cM$, i.e., there is no extender $E_\alpha^\cM$ on the $\cM$-sequence such that $\crit(E_\alpha^\cM) < \xi < \alpha$. Furthermore, for $i<\omega$, we recursively say $\xi$ is an \emph{$i+1$-weak cutpoint} in $\cM$ iff the only extenders on the $\cM$-sequence overlapping $\xi$ have critical points $\kappa_j$, $j \leq i$, that are $j$-weak cutpoints and limits of $j$-weak cutpoints in $\cM$. More precisely, $\xi$ is an $i+1$-weak cutpoint iff for every extender $E_\alpha^\cM$ on the sequence of $\cM$, if $\crit(E_\alpha^\cM) < \xi < \alpha$, then $\crit(E_\alpha^\cM)$ is a $j$-weak cutpoint that is a limit of $j$-weak cutpoints in $\cM$ for some $j \leq i$.
  
  Analogous to the indexing in \cite{Sa17}, the following extenders are indexed differently than in Mitchell-Steel indexing: Suppose $E$ is an extender on the sequence of a premouse $\cM$ with critical point $\kappa$ that is a $j$-weak cutpoint and a limit of $j$-weak cutpoints for some $j<\omega$. Then $E$ is indexed at the successor of the least $j$-weak cutpoint above $\kappa$ of its own ultrapower, for the minimal such $j$. More precisely, a fine extender sequence in this article is defined as follows: We use the notation in \cite{St10}. In addition, if $E$ is an extender on the sequence of $\cM = J_\beta^{\vec E}$ such that $\crit(E)$ is a $j$-weak cutpoint and a limit of $j$-weak cutpoints in $\cM$ for some $j<\omega$, we say \emph{$E$ is of type S}\footnote{Here ``S'' stands for Sargsyan as this is a direct generalization of the indexing suggested by Sargsyan in \cite{Sa17}.}. Another difference is that, similar as in \cite{Schl15}, we do not require coherence above $\lambda$, the limit of Woodin and strong cardinals we aim to obtain. The reason for this is that in the model we construct there will be extenders indexed throughout the ordinals but no extenders on the sequence have critical point $\geq \lambda$. So coherence cannot hold above $\lambda$.

  \begin{definition}
      For an ordinal $\lambda$, a \emph{fine extender sequence up to $\lambda$} in this article is a sequence $\vec E$ such that for each $\alpha \in \dom(\vec E)$, $\vec E$ is acceptable at $\alpha$, and either $(\vec E)_\alpha = \emptyset$, or $E_\alpha = (\vec E)_\alpha$ is a $(\kappa, \alpha)$-pre-extender over $J_\alpha^{\vec E}$ for some $\kappa$ such that $J_\alpha^{\vec E} \models$ ``$\kappa^+$ exists'', and
      \begin{enumerate}
          \item \begin{enumerate}
              \item if $E_\alpha$ is not of type S, $E_\alpha$ is the trivial completion of $E_\alpha\upharpoonright\nu(E_\alpha)$, hence $\alpha = (\nu(E_\alpha)^+)^{\Ult(J_\alpha^{\vec E}, E_\alpha)}$, and $E_\alpha$ is not of type Z, and we let $\nu^*(E_\alpha) = \nu(E_\alpha)$,
              \item if $E_\alpha$ is of type S, $\alpha = (\xi^+)^{\Ult(J_\alpha^{\vec E},E_\alpha)}$, where $\xi$ is the least $j$-weak cutpoint of $\Ult(J_\alpha^{\vec E}, E_\alpha)$ above $\kappa$ for the smallest $j<\omega$ such that $\kappa$ is a $j$-weak cutpoint and a limit of $j$-weak cutpoints, and we let $\nu^*(E_\alpha) = \xi$,
          \end{enumerate}
          \item (Coherence) for $\alpha < \lambda$, $i(\vec E \upharpoonright \kappa) \upharpoonright \alpha = \vec E \upharpoonright \alpha$ and $i(\vec E \upharpoonright \kappa)_\alpha = \emptyset$, where $i \colon J_\alpha^{\vec E} \rightarrow \Ult(J_\alpha^{\vec E}, E_\alpha)$ is the canonical embedding, and
          \item (Closure under initial segment) for any $\eta$ such that $(\kappa^+)^{J_\alpha^{\vec E}} \leq \eta < \nu^*(E_\alpha)$, $\eta = \nu^*(E_\alpha \upharpoonright \eta)$, and $E_\alpha \upharpoonright \eta$ is not of type Z, one of the following holds:
          \begin{enumerate}
              \item there is a $\gamma < \alpha$ such that $E_\gamma$ is the $(\kappa, \gamma)$-extender derived from $E_\alpha \upharpoonright \eta$, or
              \item $E_\eta \neq \emptyset$ and letting $j \colon J_\eta^{\vec E} \rightarrow \Ult(J_\eta^{\vec E}, E_\eta)$ be the canonical embedding with $\mu = \crit(j)$, there is a $\gamma < \alpha$ such that $j(\vec E \upharpoonright \mu)_\gamma$ is the $(\kappa, \gamma)$-extender derived from $E_\alpha \upharpoonright \eta$.
          \end{enumerate}
      \end{enumerate}
  \end{definition} 
  
  %More precisely, if $E_\alpha^\cM$ is an extender on the sequence of $\cM$ such that $\crit(E_\alpha^\cM)$ is a limit of Woodin cardinals in $\cM$ and a $j$-weak cutpoint for some $j<\omega$, then $\alpha$, the index of $E_\alpha^\cM$ on the $\cM$-sequence, is equal to $(\xi^+)^{\Ult(\cM,E_\alpha^\cM)}$, where $\xi$ is the least $j$-weak cutpoint of $\Ult(\cM, E_\alpha^\cM)$ above $\crit(E_\alpha^\cM)$ for the smallest $j<\omega$ such that $\crit(E_\alpha^\cM)$ is a $j$-weak cutpoint. 
  As the points where this indexing differs from the usual Mitchell-Steel indexing are determined by the critical points of the extenders in question, the usual fine structural properties can be shown via straightforward case distinctions. The reason why this indexing is more convenient for our construction than the usual Mitchell-Steel indexing becomes apparent in the definition of the operators, cf. Section \ref{subsec:operators}.
  We will work below a model of ``$\AD_\bR + \; \Theta$ is regular'' and refer to hybrid strategy mice as in \cite{Sa15}. %but we will also use the comparison theorem from \cite{St22, St23}, cf. for example the proof of Lemma \ref{lem:MinftyIteratesToConstructioni>1} or the proof of Lemma \ref{lem:CompAgainstHull}. Note that in our context this comparison theorem can be applied to hybrid strategy mice in the sense of \cite{Sa15} as well.
  For similar results concering strategy mice and comparison of iteration strategies but for a different notion of hod pair, the interested reader may also consult \cite{St22,St23}.

\section{Getting a translatable structure}

In this section, we work in a model of ``$\AD_\bR + \text{ all sets of reals are uB}$'' in order to define a hybrid structure that we are going to translate into a model with a cardinal $\lambda$ that is both a limit of Woodin cardinals and a limit of strong cardinals in the rest of this paper. Recall that by a result of Martin and Woodin, cf. \cite[Theorem 9.1]{St09}, $\AD_\bR$ holds in every model of ``$\AD + \text{ all sets of reals are uB}$'' as clearly every universally Baire set of reals is Suslin. The arguments in this section will come from descriptive inner model theory and will look familiar to readers who have seen \emph{$\HOD$ computations} (cf., e.g. \cite{StW16, MS21, Sa15, Tr14}).
%\later{Check if we really need $\AD^+$ or if we can just argue from $\AD$.} \smalltodo{By \cite[Theorems 9.1 and 9.2]{St09}, $\AD$ should be enough.}

All definitions in Subsections \ref{subsec:hodpm}, \ref{subsec:suitablepm}, and \ref{subsec:propitstr} will make sense in models of $\ZFC$ as well as in models of $\AD^+$.\footnote{The ``$+$'' in $\AD^+$ will not be essential to follow the arguments in this article. So we just note that $\AD$ plus ``all sets of reals are Suslin'' (and hence obviously our hypothesis $\AD$ plus ``all sets of reals are universally Baire'') implies $\AD^+$ and refer the interested reader to \cite{La23, La20} for details on $\AD^+$.} The reason is that we will not put any restrictions on the iterability of the mice we include in lower part models, they will all be fully iterable throughout the ordinals. The notions introduced in the next subsections will be used later in this article in models of $\ZF + \AD^+$ and in strategy mice. At that point we will specify the model in which we are working, so we do not fix a model at this point.

\subsection{Hod premice}\label{subsec:hodpm}

%We use the notion of $\vec A$-iterability as in \cite[Chapter 7]{SchSt} \prpl{(really? maybe we want to use \cite{Sa15})} and work in derived models (as there are no indiscernibles available here).

We consider \emph{hod premice} as in \cite[Definition 1.34]{Sa15} and just briefly recall some important notions here. Let $\cP$ be a hod premouse. Then, as in \cite{Sa15} we let $\lambda^\cP$ be the order type of the Woodin cardinals of $\cP$ and their limits\footnote{In the notation of \cite{Sa15}, $\lambda^\cP = o.t.(Y^\cP)$.} and $(\delta_\alpha^\cP \mid \alpha \leq \lambda^\cP)$ be the sequence of Woodin cardinals of $\cP$ and their limits enumerated in increasing order. All $\delta_\alpha^\cP$ will be strong cutpoints\footnote{An ordinal $\gamma$ is called a strong cutpoint of a (hod) premouse $\cP$ if there is no extender on the sequence of $\cP$ such that $\crit(E) \leq \gamma < \lh(E)$.} of $\cP$ and our hod premice will always satisfy \[\lambda^\cP \leq \omega,\] so they will for example look like \cite[Figure 1.7.2]{Sa15} (an ordinary premouse, $\lambda^\cP = 0$), \cite[Figure 1.7.3]{Sa15} ($\lambda^\cP = 1$), or \cite[Figure 1.7.4]{Sa15} ($\lambda^\cP = \omega$). As in \cite[Definition 1.34]{Sa15}, we use
\begin{enumerate}
\item $\cP(i)$ to denote $\cP$ up to its $i$'s level\footnote{So, in our situation, $\cP(i) = \cP | (\delta_i^\cP)^{+\omega}$.} for $i \leq \lambda^\cP$ and
\item let $\cP(\lambda^\cP) = \cP$.
\end{enumerate}
Moreover, we let
\begin{enumerate}
\item $\Sigma^\cP_i$ denote the internal strategy of $\cP(i)$ for $i<\lambda^\cP$ and
\item if $\lambda^\cP = \omega$, we let $\Sigma^\cP = \Sigma^\cP_{{<}\omega} = \bigoplus_{i<\omega} \Sigma_i^\cP.$ % I.e., $\Sigma^\cP$ is the internal strategy of $\cP|\delta_\omega^\cP$.
\end{enumerate}
Whenever it does not lead to confusion, we drop the superscript $\cP$ and just write $\Sigma_i$ and $\Sigma$.
For a hod premouse $\cP$ such that $\lambda^\cP = i < \omega$, we write \[ \cP^- = \cP | (\delta_{i-1}^{+\omega})^\cP. \] If $\lambda^\cP = \omega$, we write \[ \cP^- = \cP|\delta^\cP_\omega. \]

We will consider strategies for hod premice that satisfy hull condensation (cf., \cite[Definition 1.31]{Sa15}).

\begin{definition}\label{def:hullcondensation}
  Let $\cP$ be a hod premouse and let $\Sigma$ be an $(\omega_1, \omega_1+1)$-iteration strategy for $\cP$. Then we say $\Sigma$ has \emph{hull condensation} if for any two stacks $\vec\cT$ and $\vec\cU$ on $\cP$ such that $\vec\cT$ is according to $\Sigma$ and $(\cP,\vec\cU)$ is a hull of $(\cP,\vec\cT)$ in the sense of \cite[Definition 1.30]{Sa15}, $\vec\cU$ is according to $\Sigma$ as well.
\end{definition}

Recall the definition of a hod pair (cf., \cite[Definition 1.36]{Sa15}).

\begin{definition}
  We say $(\cP, \Sigma)$ is a \emph{hod pair} iff
  \begin{enumerate}
  \item $(\cP, \Sigma)$ is an lsm pair (cf., \cite[Definition 1.23]{Sa15}),
  \item $\cP$ is a hod premouse, and
  \item $\Sigma$ is an $(\omega_1, \omega_1+1)$-iteration strategy for $\cP$ with hull condensation.
  \end{enumerate}
\end{definition}

As in \cite[Definition 2.26]{Sa15}, except that we require full iterability, we define the lower part closure relative to $\Sigma$ as follows. 
\iffalse
\begin{definition}
  Let $\Sigma$ be an iteration strategy with hull condensation for some premouse $\cM_\Sigma$ and $a$ a transitive self well-ordered\footnote{Following \cite{SchlToperators}, we say a transitive set $a$ is \emph{self well-ordered} iff $a = b \cup \{b,<\}$ for some transitive set $b$ and a well-order $<$ of $b$. This induces a natural well-ordering $<_a \in L_1(a)$ on $a$ extending $<$.} set such that $\cM_\Sigma \in a$. Then we let
  \begin{enumerate}
  \item $Lp_0^\Sigma(a) = a$, 
  \item $Lp^\Sigma(a) = Lp_1^\Sigma(a) = \bigcup \{ \cM \mid \cM$ is a countably iterable\footnote{That means, every countable substructure of $\cM$ is $(\omega_1,\omega_1+1)$-iterable.} sound $\Sigma$-mouse over $a$ such that $\rho_\omega(\cM) = a \}$,
  \item $Lp_{\alpha+1}^\Sigma(a) = Lp^\Sigma(Lp_\alpha^\Sigma(a))$, and
  \item $Lp_\lambda^\Sigma(a) = \bigcup_{\alpha<\lambda} Lp_\alpha^\Sigma(a)$, for limit ordinals $\lambda$.
  \end{enumerate}
\end{definition}

A similar closure but when requiring full iterability is defined as follows.\fi
Note that this is denoted by the $\mathcal{W}$-operator in \cite{Sa17} and other articles.

\begin{definition}
  Let $\Sigma$ be an iteration strategy with hull condensation for some premouse $\cM_\Sigma$ and $a$ a transitive self well-ordered\footnote{Following \cite{SchlToperators}, we say a transitive set $a$ is \emph{self well-ordered} iff $a = b \cup \{b,<\}$ for some transitive set $b$ and a well-order $<$ of $b$. This induces a natural well-ordering $<_a \in L_1(a)$ on $a$ extending $<$.} set such that $\cM_\Sigma \in a$. Then we let
  \begin{enumerate}
  \item $Lp_0^{\Sigma,\infty}(a) = a$, 
  \item $Lp^{\Sigma,\infty}(a) = Lp_1^{\Sigma,\infty}(a) = \bigcup \{ \cM \mid \cM$ is an $\Ord$-iterable sound $\Sigma$-mouse over $a$ such that $\rho_\omega(\cM) = a \}$,
  \item $Lp_{\alpha+1}^{\Sigma,\infty}(a) = Lp^{\Sigma,\infty}(Lp_\alpha^{\Sigma,\infty}(a))$, and
  \item $Lp_\lambda^{\Sigma,\infty}(a) = \bigcup_{\alpha<\lambda} Lp_\alpha^{\Sigma,\infty}(a)$, for limit ordinals $\lambda$.
  \end{enumerate}
\end{definition}

The following is a special case of $Lp^{\Sigma,\infty}$ for ordinary mice. 

\begin{definition}
  Let $a$ be a transitive self well-ordered set. Then we let
  \begin{enumerate}
  \item $Lp^\infty_0(a) = a$,
  \item $Lp^\infty(a) = Lp^\infty_1(a) = \bigcup \{ \cM \mid \cM$ is an $\Ord$-iterable sound mouse over $a$ such that $\rho_\omega(\cM) = a \}$,
  \item $Lp^\infty_{\alpha+1}(a) = Lp^\infty(Lp^\infty_\alpha(a))$, and
  \item $Lp^\infty_\lambda(a) = \bigcup_{\alpha<\lambda} Lp^\infty_\alpha(a)$, for limit ordinals $\lambda$.
  \end{enumerate}
\end{definition}

We define fullness preservation analogous to \cite[Definition 2.27]{Sa15}.

\begin{definition}
  Let $(\cP,\Sigma)$ be a hod pair. Then $\Sigma$ is \emph{fullness preserving} if whenever $\vec\cT$ is a stack on $\cP$ via $\Sigma$ with last model $\cQ$ such that $\pi^{\vec\cT}$ exists, $i +1 \leq \lambda^\cQ$ and $\eta \geq \delta_i^\cQ$ is a strong cutpoint of $\cQ(i+1)$, then
  \[ \cQ | (\eta^+)^{\cQ(i+1)} = Lp^{\Sigma_{\cQ(i),\vec\cT}, \infty}(\cQ|\eta). \]
\end{definition}

Following \cite[Section 4.3]{Sa15}, we can now define direct limits of hod pairs as follows. For a hod pair $(\cP, \Sigma)$ such that $\Sigma$ has branch condensation (cf., \cite[Definition 2.14]{Sa15}) and is fullness preserving, let % stacks can be of arbitrary length here
\begin{align*}
  pB(\cP,\Sigma) = \{ \cQ & \mid \exists \, \vec\cT \text{ stack on } \cP \text{ via } \Sigma \text{ with last model $\cR$ such that} \\ & \text{$\cP$-to-$\cR$ does not drop and $\cQ$ is a hod initial segment of $\cR$,} \\ & \text{i.e., $\cQ = \cR(\alpha)$ for some $\alpha < \lambda^\cR$} \}
\end{align*}
and
\begin{align*}
  pI(\cP, \Sigma) = \{ \cQ & \mid \exists \, \vec\cT \text{ stack on } \cP \text{ via } \Sigma \text{ with last model $\cQ$} \\ & \text{such that $\cP$-to-$\cQ$ does not drop} \}.
\end{align*}

We let $\cM_\infty(\cP,\Sigma)$ be the direct limit of all $\Sigma$-iterates of $\cP$ under the iteration maps. That means
\begin{align*}
   \cM_\infty(\cP, \Sigma) = \dirlim((\cQ, &\Sigma_{\cQ}), \pi^\Sigma_{\cQ,\cR} \mid \cQ, \cR \in pB(\cP, \Sigma) \; \wedge \\ & \exists \alpha \leq \lambda^\cR (\cR(\alpha) \in pI(\cQ,\Sigma_{\cQ})). 
\end{align*}

\subsection{Suitable premice}\label{subsec:suitablepm}

We define suitable premice and related concepts as in \cite[Section 4.1]{Sa15} but we will recall most notions for the reader's convenience. The reader can consult \cite{SaTr14,Tr14} for similar definitions. 

We first define suitability at $\kappa_0$. The role of $\kappa_0$ will become clear later in the definition of the translation procedure. For now we could also call this notion $0$-suitability\footnote{This is different from $0$-suitability as defined in \cite{MSW} or \cite{MS21}, so to avoid confusion we decided to call it $\kappa_0$-suitability here.} and it suffices to notice that these mice are relevant for the first level of our construction. Note that $\kappa_0$-suitable mice are ordinary and not hybrid.

\begin{definition}\label{def:kappa0suitable}
  We say a premouse $\cP$ is \emph{suitable at $\kappa_0$} (or $\kappa_0$-suitable) iff the following holds:
  \begin{enumerate}
  \item There is some ordinal $\delta$ such that \[ \cP \vDash \text{``}\delta \text{ is a Woodin cardinal''} \] and $\cP \cap \Ord = \sup_{n<\omega}(\delta^{+n})^\cP$.
  \item $\cP$ is a hod premouse with $\lambda^\cP = 0$, so $\cP$ is an ordinary premouse.
  \item For any $\cP$-cardinal $\eta$, if $\eta$ is a strong cutpoint, then \[ \cP | (\eta^+)^\cP = Lp^\infty(\cP|\eta). \]
  \end{enumerate}
\end{definition}

In general, our suitable premice are hybrid and defined as follows. Again, the role of $\kappa_i$ will become clear later and we could have called these premice $i$-suitable here.

\begin{definition}\label{def:kappaisuitable}
  Let $i > 0$. We say a hod premouse $\cP$ is \emph{$\Sigma$-suitable at $\kappa_i$} (or $(\kappa_i,\Sigma)$-suitable) iff the following holds:
  \begin{enumerate}
  \item There is some ordinal $\delta$ such that \[ \cP \vDash \text{``}\delta \text{ is a Woodin cardinal''} \] and $\cP \cap \Ord = \sup_{n<\omega}(\delta^{+n})^\cP$.
  \item $\cP$ is a hod premouse with $\lambda^\cP = i$.
  \item $(\cP^-,\Sigma)$ is a hod pair such that $\Sigma$ has branch condensation and is fullness preserving.
  \item $\cP$ is a $\Sigma$-mouse above $\cP(i-1)$.
  \item For any $\cP$-cardinal $\eta > \delta^\cP_{i-1}$, if $\eta$ is a strong cutpoint, then \[ \cP | (\eta^+)^\cP = Lp^{\Sigma,\infty}(\cP|\eta). \]
  \end{enumerate}
  If it is clear which strategy $\Sigma$ we consider, we will also just call $\cP$ suitable at $\kappa_i$ or $\kappa_i$-suitable. We denote the unique $\delta$ in Definitions \ref{def:kappa0suitable} and \ref{def:kappaisuitable} by $\delta^\cP$.
% In \cite[Definition 4.1]{Sa15}, Grigor allows also mice of height $\delta^{+k}$ for $k<\omega$. Check if this would be useful for us as well
% \prpl{Cf. the definition of suitable pair \cite[Definition 4.1]{Sa15}, this allows more than one Woodin cardinal in a suitable mouse. This for $\Gamma = \Pot(\bR)$ looks close to what we want. This is different from \cite[Definition 1.3]{SaTr14}. The following allows $n$ Woodin cardinals as well, check if this applies to our context as well: \cite[Definition 4.1]{Tr14}.
%It would be great to have a notion of suitability that makes sense in $\AD$ and $\ZFC$ contexts.}
\end{definition}

To unify the notation we sometimes call a premouse $\Sigma$-suitable at $\kappa_0$ as well and mean suitable at $\kappa_0$. In this case $\Sigma$ could be empty or undefined.

%\begin{remark*}
%  $\cP = Lp_\omega(\cM | \delta_0)$ is suitable at $\kappa_0$. 
%\end{remark*}

We define $(\kappa_i, \Sigma)$-short tree, $(\kappa_i, \Sigma)$-maximal tree and $(\kappa_i, \Sigma)$-correctly guided finite (mixed) stack , the last model of a $(\kappa_i, \Sigma)$-correctly guided finite mixed stack, $S(\kappa_i, \Sigma)$, $F(\kappa_i, \Sigma)$, and $f$-iterability  as in \cite[Definitions 4.3 - 4.7]{Sa15}, adapted to our definition of suitability. We recall the concepts here for the reader's convenience. In what follows, let $\cP$ be a hod premouse that is $\Sigma$-suitable at some $\kappa_i$, $i<\omega$, for some fullness preserving iteration strategy $\Sigma$. Write 
\begin{eqnarray*}
    \mathcal{O}^{\cP}_\eta = \bigcup\{ \cN \unlhd \cP \mid \cP | \eta \unlhd \cN, \rho_\omega(\cN) \leq \eta, Y^\cN \subseteq \eta, \text{ and }
    \text{there is no} \\ \text{ extender } E \text{ on the sequence of } \cN \text{ with }\crit(E) \leq \eta < \lh(E) \}. 
\end{eqnarray*} 

Following the notation in \cite{Sa15}, we say an iteration tree $\cT$ on $\cP$ above $\cP^-$ is \emph{nice} if $\cT$ has no fatal drops, i.e., there is no $\alpha < \lh(\cT)$ and $\eta < \cM_\alpha^\cT \cap \Ord$ such that $\cT_{\geq \cM_\alpha^\cT}$ is a normal iteration tree on $\mathcal{O}_\eta^{\cM_\alpha^\cT}$ that is above $\eta$. Here $\cT_{\geq \cM_\alpha^\cT}$ denotes the part of $\cT$ starting with $\cM_\alpha^\cT$.

\begin{definition}
    Let $\cT$ be a nice iteration tree on the $(\kappa_i, \Sigma)$-suitable hod premouse $\cP$. Then we say
    \begin{enumerate}
        \item $\cT$ is $(\kappa_i, \Sigma)$-\emph{correctly guided} if for every limit ordinal $\lambda < \lh(\cT)$, $\cQ(\cT \upharpoonright \lambda)$ exists and $\cQ(\cT \upharpoonright \lambda) \unlhd Lp^{\Sigma, \infty}(\cM(\cT \upharpoonright \lambda)),$
        \item $\cT$ is $(\kappa_i, \Sigma)$-\emph{short} if $\cT$ is $(\kappa_i, \Sigma)$-correctly guided and either $\cT$ has a last model, or $\cT$ has limit length and $\delta(\cT)$ is not Woodin in $Lp^{\Sigma, \infty}(\cM(\cT)),$ and
        \item $\cT$ is $(\kappa_i, \Sigma)$-\emph{maximal} if $\cT$ is a nice $(\kappa_i, \Sigma)$-correctly guided tree and not $(\kappa_i, \Sigma)$-short.
    \end{enumerate}
\end{definition}

\begin{definition}
    A sequence $(\cT_j, \cP_j \mid j \leq m)$ for $m < \omega$ is called a $(\kappa_i, \Sigma)$-\emph{correctly guided finite stack on $\cP$} if $\cP_0 = \cP$ and
    \begin{enumerate}
        \item for every $j \leq m$, $\cP_i$ is $(\kappa_i, \Sigma)$-suitable and $\cT_j$ is a nice $(\kappa_i, \Sigma)$-correctly guided tree on $\cP_j$,
        \item for every $j < m$, either 
        \begin{enumerate}
            \item $\cT_j$ has last model $\cP_{j+1}$ and the iteration embedding $\pi^{\cT_j}$ exists, and if $\lh(\cT_j) = \lambda + 1$ for some limit ordinal $\lambda$, then $\cQ(\cT_j^-)$ exists and $\cQ(\cT_j^-) \unlhd \cM^{\cT_j}_\lambda$, or
            \item $\cT_j$ is $(\kappa_i, \Sigma)$-maximal and $\cP_{j+1} = Lp^{\Sigma, \infty}_{\omega}(\cM(\cT_j))$,
        \end{enumerate}
        where $\cT_j^-$ denotes the iteration tree $\cT_j$ without its last branch.
    \end{enumerate}
\end{definition}

\begin{definition}A sequence $(\cT_j^k, \cP_j^k, \vec\cT^k \mid k \leq m, j \leq n_k)$ for $m < \omega$ and $n_k<\omega$ for all $k \leq m$ is called a $(\kappa_i, \Sigma)$-\emph{correctly guided finite mixed stack on $\cP$} if $\cP_0^0 = \cP$ and
\begin{enumerate}
    \item for every $k \leq m$, $\cP_0^k$ is $\Sigma_{(\cP_0^k)^-}$-suitable at $\kappa_i$ and $(\cT_j^k, \cP_j^k \mid j \leq n_k)$ is a $(\kappa_i, \Sigma_{(\cP_0^k)^-})$-correctly guided finite stack on $\cP_0^k$,
    \item for every $k < m$, $\vec\cT^k$ is a stack on $(\cP_{n_k}^k)^-$ via $\Sigma_{(\cP_{n_k}^k)^-}$ such that $\cP_0^{k+1}$ is the last model of $\vec\cT^k$ when it is regarded as a stack on $\cP_{n_k}^k$ and $(\cP_{n_k}^k)^-$-to-$(\cP_0^{k+1})^-$ does not drop, and
    \item $\vec\cT_m$ is a stack on $(\cP_{n_m}^m)^-$ via $\Sigma_{(\cP_{n_m}^m)^-}$.
\end{enumerate}
\end{definition}

\begin{definition}
Suppose $(\cT_j^k, \cP_j^k, \vec\cT^k \mid k \leq m, j \leq n_k)$ for $m < \omega$ and $n_k<\omega$ for all $k \leq m$ is a $(\kappa_i, \Sigma)$-correctly guided finite mixed stack on $\cP$. Then we say $\cR$ is the \emph{last model of $(\cT_j^k, \cP_j^k, \vec\cT^k \mid k \leq m, j \leq n_k)$} if 
\begin{enumerate}
    \item $\vec\cT_m$ is non-empty and has a last model, which is $\cR$, when $\vec\cT_m$ is regarded as a stack on $\cP_{n_m}^m$,
    \item $\vec\cT_m$ is empty and $\cT^m_{n_m}$ has a last model which is $\cR$,
    \item $\vec\cT_m$ is empty, $\cT^m_{n_m}$ has limit length and is $(\kappa_i, \Sigma_{(\cP_0^m)^-})$-short, and there is a cofinal well-founded branch $b$ of $\cT^m_{n_m}$ such that $\cQ(b, \cT^m_{n_m})$ exists, \[ \cQ(b, \cT^m_{n_m}) \unlhd Lp^{\Sigma_{(\cP_0^m)^-}, \infty}(\cM(\cT^m_{n_m})) \] and $\cR = \cM_b^{\cT^m_{n_m}}$, or
    \item $\vec\cT_m$ is empty, $\cT^m_{n_m}$ has limit length and is $(\kappa_i, \Sigma_{(\cP_0^m)^-})$-maximal, $\cR$ is $(\kappa_i, \Sigma_{(\cP_0^m)^-})$-suitable and \[ \cR = Lp_\omega^{\Sigma_{(\cP_0^m)^-}, \infty}(\cM(\cT^m_{n_m})). \]
\end{enumerate}
In this case, we say $\cR$ is a \emph{$(\kappa_i, \Sigma)$-correct iterate of $\cP$}.
\end{definition}

Before we define $f$-iterability, we introduce some more notation from \cite{Sa15}. Let
\[ S(\kappa_i, \Sigma) = \{ \cQ \mid \cQ^- \in pI(\cP, \Sigma) \text{ and } \cQ \text{ with strategy } \Sigma_{\cQ^-} \text{ is } (\kappa_i, \Sigma)\text{-suitable} \} \]
and 
\begin{eqnarray*}
    F(\kappa_i, \Sigma) = \{ f \colon S(\kappa_i, \Sigma) \rightarrow \Pot(S(\kappa_i, \Sigma)) \mid f \text{ function such that for all } \cQ \in S(\kappa_i, \Sigma), \\ f(\cQ) \subseteq \cQ \text{ and for every } X \in \cQ, X \cap f(\cQ) \in \cQ \}. \;\;\;\;\;\;\;\;\;
\end{eqnarray*}
Moreover, for $\cQ \in S(\kappa_i, \Sigma)$ and $f \in F(\kappa_i, \Sigma)$ let, for $n<\omega$, \[ f^n(\cQ) = f(\cQ) \cap \cQ|((\delta^\cQ)^{+n})^\cQ. \] Then $f(\cQ) = \bigcup_{n<\omega} f^n(\cQ)$ and we can define $\pi(f(\cQ)) = \bigcup_{n<\omega} \pi(f^n(\cQ))$ for any elementary embedding $\pi \colon \cQ \rightarrow \cR$. Let
\[ \gamma_f^\cQ = \sup(Hull_1^\cQ(\cQ^- \cup \{ f^n(\cQ) \mid n < \omega \}) \cap \delta^\cQ), \] 
where $Hull_1^\cQ$ denotes an uncollapsed $\Sigma_1$-hull in $\cQ$, and note that
\[ \gamma_f^\cQ = \sup(Hull_1^\cQ(\gamma_f^\cQ \cup \{ f^n(\cQ) \mid n < \omega \}) \cap \delta^\cQ). \] 
Moreover, let
\[ H_f^\cQ = Hull_1^\cQ(\gamma_f^\cQ \cup \{ f^n(\cQ) \mid n < \omega \}). \]
Now, we can define $f$-iterability.

\begin{definition}\label{def:fiterability}
    For $\cQ \in S(\kappa_i, \Sigma)$ and $f \in F(\kappa_i, \Sigma)$ we say that $\cQ$ is \emph{$f$-iterable} if for every $(\kappa_i, \Sigma)$-correctly guided finite mixed stack $(\cT_j^k, \cQ_j^k, \vec\cT^k \mid k \leq m, j \leq n_k)$ for $m < \omega$ and $n_k<\omega$ for all $k \leq m$ on $\cQ$ with last model $\cR$ there is a sequence \[ (b_j^k \mid k \leq m, j \leq n_k) \] with the following properties.
    \begin{enumerate}
        \item For $k \leq m-1$ and $j \leq n_k$ as well as for $k = m$ and $j \leq n_m - 1$, $b_j^k = \emptyset$, if $\cT_j^k$ has successor length, and $b_j^k$ is a cofinal well-founded branch through $\cT_j^k$ with $\cM_{b_j^k}^{\cT_j^k} = \cQ_j^k$, if $\cT_j^k$ is a $(\kappa_i, \Sigma_{(\cP_0^k)^-})$-maximal tree.
        \item For the last tree:
        \begin{enumerate}
            \item If $\cT_{n_m}^m$ has successor length, then $b_{n_m}^m = \emptyset$.
            \item If $\cT_{n_m}^m$ is a $(\kappa_i, \Sigma_{(\cP_0^m)^-})$-short tree, then $b_{n_m}^m$ is the unique cofinal well-founded branch through $\cT_{n_m}^m$ such that $\cQ(b_{n_m}^m, \cT_{n_m}^m)$ exists and $\cQ(b_{n_m}^m, \cT_{n_m}^m) \unlhd Lp_\omega^{\Sigma_{(\cP_0^m)^-}, \infty}(\cM(\cT^m_{n_m}))$.
            \item If $\cT_{n_m}^m$ is a $(\kappa_i, \Sigma_{(\cP_0^m)^-})$-maximal tree, then $b_{n_m}^m$ is a cofinal well-founded branch.
        \end{enumerate}
        \item For $k \leq m$ and $j \leq n_k$, let 
        \[ \pi_j^k = \begin{cases}
            \pi^{T_j^k} & \text{ if } T_j^k \text{ has successor length, and}\\
            \pi^{T_j^k}_{b_j^k} & \text{ if } T_j^k \text{ has limit length.} 
        \end{cases} \]
        Moreover, let \[ \pi_k = \pi^{\vec\cT^k} \circ \pi^k_{n_k} \circ \pi^k_{n_k -1} \circ \dots \circ \pi^k_0 \] and
        \[ \pi = \pi_{m} \circ \pi_{m-1} \circ \dots \circ \pi_0. \]
        Then $\pi(f(\cQ)) = f(\cR).$ \label{eq:fiterabilityclause3}
    \end{enumerate}
\end{definition}

In the setting of Definition \ref{def:fiterability} we say that the sequence $\vec b = (b_j^k \mid k \leq m, j \leq n_k)$ witnesses $f$-iterability for $\vec\cT = (\cT_j^k, \cQ_j^k, \vec\cT^k \mid k \leq m, j \leq n_k)$ and write $\pi_{\vec\cT, \vec b}$ for the embedding $\pi$ in Definition \ref{def:fiterability}\eqref{eq:fiterabilityclause3}.

As usual, $f$-iterability embeddings for a given $\vec\cT$ are independent of the choice of $\vec b$ (see \cite[Lemma 4.8]{Sa15}). Moreover, we can define a strengthening, called \emph{strong $f$-iterability}, where this uniqueness also does not depend on the choice of the stack $\vec\cT$ (see \cite[Definition 4.10]{Sa15}).

In the previous definitions we omit the reference to $\kappa_i$ and $\Sigma$ if $i = 0$ and $\Sigma = \emptyset$ or it is clear from the context to which $i < \omega$ and strategy $\Sigma$ we are referring. 
As in \cite[Section 4.2]{Sa15} we can define when a $(\kappa_i, \Sigma)$-suitable premouse is $A$-iterable\footnote{Sargsyan calls this $B$-iterability.} for an $\OD_\Sigma$ set of reals $A$:

Recall that for a $(\kappa_i, \Sigma)$-suitable premouse $\cP$, some uncountable $\cP$-cardinal $\gamma$, and a set of reals $A$, a term $\tau \in \cP^{\Col(\omega,\gamma)}$ for a set of reals \emph{locally term captures $A$ at $\gamma$} if whenever $g$ is $\Col(\omega, \gamma)$-generic over $\cP$, \[ \tau_g = A \cap \cP[g]. \]

\begin{definition}\label{def:Aiterable}
    Suppose $A$ is an $\OD_\Sigma$ set of reals that is locally term captured for comeager many set generics over a $(\kappa_i, \Sigma)$-suitable premouse $\cP$. Let $\tau_{A,\gamma}^{\cP, \Sigma}$ be the term in $\cP$ capturing $A$ at $\gamma$ for comeager many set generics and let $f_A \in F(\kappa_i,\Sigma)$ be the function given by \[ f_A(\cQ) = \bigoplus_{\gamma < \cQ \cap \Ord} \tau_{A,\gamma}^{\cQ, \Sigma_{\cQ^-}}, \] for all $\cQ \in S(\kappa_i, \Sigma)$. Then $\cP$ is \emph{$A$-iterable} iff it is $f_A$-iterable.
\end{definition}

In the setting of Definition \ref{def:Aiterable}, we write \[ \gamma_A^{\cQ, \Sigma_{\cQ^-}} = \gamma^{\cQ}_{f_A} \] and \[ H_A^{\cQ, \Sigma_{\cQ^-}} = H^{\cQ}_{f_A} \] for any $\cQ \in S(\kappa_i, \Sigma)$.
For a $(\kappa_i, \Sigma)$-correctly guided finite mixed stack $\vec\cT$ we say that $\vec b$ is an \emph{$A$-correct branch} iff $\vec b$ witnesses $f_A$-iterability for $\cT$.
%This also leads to a definition of $\vec A$-correct branch, cf., \cite[Definition 7.7.3]{SchSt}. 

%\begin{definition}
%  Let $i \geq 0$. We say $(\cR_\alpha,\cT_\alpha \mid \alpha < \gamma)$ is a \emph{pre-correctly guided countable stack} on some premouse $\cR = \cR_0$ that is $\Sigma$-suitable at $\kappa_i$ iff for every limit ordinal $\nu < \gamma$ and every $m<\omega$, $(\cR_{\nu+k},\cT_{\nu+k} \mid k < m)$ is a $\Sigma$-correctly guided finite stack on $\cR_\nu$. %\later{Check again if this is what we want.}
%\end{definition}

\subsection{Properties of iteration strategies}\label{subsec:propitstr}

We will need the following standard properties of iteration strategies. The first one is super fullness preservation. This notion goes back to \cite[Definition 3.33]{Sa15} and is also used in \cite[Definition 2.3]{Sa17}. We simply adapt the definition to our context. Recall that we will specify the models in which we apply the definitions from this section later. In this section we sometimes explicitly mention a model $M$ when we discuss generic closure of this model under a strategy and related notions. This will later be applied to models $M$ that are strategy mice.

\begin{definition}
  Let $(\cP,\Sigma)$ be a hod pair such that $\Sigma$ has branch condensation and is fullness preserving. Then, for some $\eta$, we say $\Sigma$ is \emph{$\eta$-super fullness preserving} iff,
  letting $\delta = \delta^\cP$ and $\delta^n = (\delta^{+n})^\cP$, for every $n<\omega$, there is a term $\tau \in \cP^{\Col(\omega, \delta^n)}$ such that
  \begin{enumerate}
      \item $\cP \Vdash_{\Col(\omega, \delta^n)} \tau \subseteq \bR^2$,
      \item for every $\Col(\omega, \delta^n)$-generic $g$ over $\cP$,
      \begin{align*}
      \tau_g \subseteq \{ (x,y) \in \bR^2 \mid y &\text{ codes a sound $\Sigma$-premouse $\cN$ over $x$ such that } \\ & \cN \unlhd Lp^{\Sigma,\infty}(x) \text{ and } \rho_\omega(\cN) = \omega \},
      \end{align*}
      \item for every iteration $j \colon \cP \rightarrow \cQ$ according to $\Sigma \upharpoonright V_\eta$, every self well-ordered set $X \in V_\eta$, and every $\Col(\omega, j(\delta^n))$-generic $g$ over $\cQ$ such that $X \in \HC^{\cQ[g]}$, letting $x \in \bR^{\cQ[g]}$ be generic over $L_\omega[X]$ and coding $X$,\footnote{For the definition of $\cP$-constructions for hod premice see $\cS$-constructions in \cite[Section 3.8]{Sa15}. As such constructions are due to Steel, they are called $\cS$-constructions in \cite{Sa15}. We nevertheless decided to use the more common terminology $\cP$-construction here.} 
      \begin{align*}
      Lp^{\Sigma,\infty}(X) = \bigcup&\{ \cM \mid \cM \text{ is a $\Sigma$-premouse over $X$, there exists a real } y \text{ with } \\ 
      & (x,y) \in (j(\tau))_g \text{ and } \text{$y$ codes the result of a $\cP$-construction} \\
      & \text{relative to $\Sigma$ translating $\cM$ into a $\Sigma$-premouse above $x$} \}.
      \end{align*}
  \end{enumerate}
  Moreover, we say $\Sigma$ is \emph{super fullness preserving} iff it is $\eta$-super fullness preserving for every $\eta$.
\end{definition}

Now, following \cite[Definition 2.4]{Sa17}, we can define that a model $M$ is generically closed under $\Sigma$ if $\Sigma$ is super fullness preserving in all generic extensions of $M$. Formally, this is defined as follows.

\begin{definition}
  Let $\Sigma$ be a super fullness preserving strategy in some model $M$. Then $M$ is \emph{generically closed under $\Sigma$} iff for all $M$-cardinals $\eta$ and all ${\leq}\eta$-generics $g$ over $M$, there is a unique $((\eta^+)^M, (\eta^+)^M)$-strategy $\Sigma^{g,\eta} \in M[g]$ such that $\Sigma^{g,\eta} \upharpoonright M$ is the $((\eta^+)^M, (\eta^+)^M)$-fragment of $\Sigma$ and \[ M[g] \vDash \text{``}\Sigma^{g,\eta} \text{ is $\eta$-super fullness preserving''}. \]
In this case we write $\Sigma^g = \bigcup_{\eta \in \Ord} \Sigma^{g,\eta}$. 
\end{definition}

\begin{definition}
  Let $(\cP,\Sigma)$ be a hod pair and suppose $\Sigma$ is super fullness preserving in some model $M$ that is generically closed under $\Sigma$. Let $i<\omega$ and let $\tau$ be a $\Col(\omega, \cP(i))$-term relation for a set of reals. Then \emph{$\Sigma$ guides $\tau$ correctly} iff for any $g$ generic over $M$ and any $\Sigma^g$-iterates $\cQ$ and $\cR$ of $\cP$ such that $\cP$-to-$\cQ$ and $\cP$-to-$\cR$ do not drop, whenever $x \in \bR^{M[g]}$, $g_\cQ$ is $\Col(\omega, \cQ(i))$-generic over $\cQ$, $g_\cR$ is $\Col(\omega, \cR(i))$-generic over $\cR$, and $x \in \cQ[g_\cQ] \cap \cR[g_\cR]$, then \[ x \in i(\tau)_{g_\cQ} \text{ iff } x \in j(\tau)_{g_\cR}, \] where $i \colon \cP \rightarrow \cQ$ and $j \colon \cP \rightarrow \cR$ denote the iteration embeddings.

  In this case, we write $A^g_{\Sigma,\tau}$ for the set of reals in $M[g]$ determined by $\Sigma^g$ and $\tau$.
\end{definition}

%Let $\cP$ be $\Sigma$-suitable at $\kappa_i$ for some $i<\omega$ and let $\tau$ be a $\Col(\omega,((\delta^\cP)^{+n})^\cP)$-term relation for a set of reals for some $n<\omega$. Let \[ \gamma_\tau^\cP = \sup(Hull_1^\cP(\cP^- \cup \{ \tau \}) \cap \delta^\cP) \] and \[ H_\tau^\cP = Hull_1^\cP(\gamma_\tau^\cP \cup \{ \tau \}). \]
Now, the following definition adapts \cite[Definition 2.6]{Sa17} to our context (cf., \cite[Definition 4.14]{Sa15}). 

\begin{definition}
  Let $\cP$ be $\Sigma$-suitable at $\kappa_i$ for some $i<\omega$ and suppose $(\cP,\Lambda)$ is a hod pair such that $\Lambda$ is super fullness preserving in some model $M$ that is generically closed under $\Lambda$. Let $\vec\tau = (\tau_k \mid k<\omega)$ be a sequence of terms in $M$. Then we say \emph{$\Lambda$ is guided by $\vec\tau$} iff for any $g$ generic over $M$ the following holds. %\prpl{Does it suffice to define this for suitable $\Sigma$-mice? Then we have a largest $\delta$ we could use for the definition of $H^\cP$.}
  \begin{enumerate}
  \item For every $k<\omega$ there is some $n<\omega$ such that $\tau_k$ is a $\Col(\omega, ((\delta^\cP)^{+n})^\cP)$-term relation for a set of reals and $\Lambda^g$ guides $\tau_k$ correctly.
  \item For each $\Lambda^g$-iterate $\cQ$ of $\cP$ such that $\cP$-to-$\cQ$ does not drop with iteration embedding $\pi \colon \cP \rightarrow \cQ$, \[ \cQ = \bigcup_{k<\omega} H_{B_k}^{\cQ, \Sigma_{\cQ^-}}, \] for $B_k = A^g_{\Lambda_\cQ, \pi(\tau_k)}$.
  \item For each $\Lambda^g$-iterate $\cQ$ of $\cP$ such that $\cP$-to-$\cQ$ does not drop with iteration embedding $\pi \colon \cP \rightarrow \cQ$ and each normal tree $\cT$ in the domain of $\Lambda^g_\cQ$, letting $b = \Lambda^g_\cQ(\cT)$, if $\pi_b^\cT$ exists, then $b$ is the unique branch through $\cT$ such that for every $k<\omega$, \[ \pi_b^\cT(\pi(\tau_k)) \text{ locally term captures } A^g_{\Lambda_\cQ,\pi(\tau_k)}. \]
  \item For each $\eta$ that is a limit of Woodin cardinals in $M[g]$ such that $\cP \in V_\eta^{M[g]}$, %\later{We want to apply this to $M = \cW$, so there is a unique limit of Woodin cardinals in our model. Maybe it makes more sense to adapt this definition to our special case.} 
  if $h$ is $\Col(\omega, {<}\eta)$-generic over $M[g]$, then for every $k<\omega$, $A_{\Lambda,\tau_k}^{g*h}$ is an $\OD_\Sigma$ set of reals in the derived model of $M[g]$ as computed by $h$.
  \end{enumerate}
Moreover, we say \emph{$\Lambda$ is strongly guided by the sequence $\vec\tau = (\tau_k \mid k<\omega)$} iff for any $\Lambda^g$-iterate $\cQ$ of $\cP$ such that $\cP$-to-$\cQ$ does not drop with iteration embedding $\pi \colon \cP \rightarrow \cQ$ and any premouse $\cR$ such that there are embeddings $\sigma \colon \cP \rightarrow \cR$ and $l \colon \cR \rightarrow \cQ$ with $\pi = l \circ \sigma$, it follows that $\cR$ is $\Sigma$-suitable at $\kappa_i$ and for every $k<\omega$, $l^{-1}(\pi(\tau_k))$ locally term captures $A^g_{\Lambda,\tau_k}$.
\end{definition}

%\begin{definition}
%  \smalltodo{Let $\vec A = (A_k \mid k<\omega)$ be a sequence of $\OD$ sets. Then \emph{$\vec A$ strongly guides $\Sigma$} iff whenever we let $\vec\tau = (\tau_k \mid k<\omega)$ be a sequence of term relations such that $\tau_k$ term captures $A_k$ for $k<\omega$, then $\Sigma$ is strongly guided by $\vec\tau$.} 
%\end{definition}

\subsection{Translatable structures}

Work in a model of \[ \AD_\bR + \text{ all sets of reals are universally Baire.} \]
Recall that we may and will assume\footnote{A model of ``$\AD_\bR + \; \Theta$ is regular'' implies the existence of a model of $\ZFC$ with a cardinal $\delta$ that is an inaccessible limit of Woodin cardinals and ${<}\delta$-strong cardinals. This is more than what we are aiming to construct in Theorem \ref{thm:main}. See \cite{Sa13, Zh15}.} that we work below a model of ``$\AD_\bR + \; \Theta$ is regular'' and hence the techniques from \cite{Sa15} apply. %In our setting (in fact, in any model of $\AD^+ + \AD_\bR$) the Solovay sequence\footnote{See, for example, \cite[Section 2.2]{Sa13} for a definition of the Solovay sequence and some background.} has limit length. NO: We are not in a model of V = L(P(R)).

Suppose first that $\Theta > \theta_\omega$. Then, by \cite[Lemma 0.23]{Sa15}, there is a hod pair $(\cP, \Sigma_\cP)$ such that $\Sigma_\cP$ has branch condensation,\footnote{For branch condensation in the sense of \cite[Definition 2.14]{Sa15} it is essential that we are working with hod mice under the minimality assumption of \cite{Sa15}. This suffices for our aim but we recommend that the reader interested in a more general setting consults \cite{St22}.} is fullness-preserving and $V_{\theta_\omega}^{\HOD}$ is a shortening of the direct limit of all $\Sigma_\cP$-iterates of $\cP$, i.e., $\cM_\infty(\cP,\Sigma_\cP)|\theta_\omega = V_{\theta_\omega}^{\HOD}.$ In this case, we let \[ \cM_\infty = \cM_\infty(\cP,\Sigma_\cP)|\theta_\omega \] and let $\Sigma$ be the corresponding tail strategy of $\Sigma_\cP$.

In case $\Theta = \theta_\omega$, let
\begin{align*}
  \cM_\infty = \bigcup\{\cM_\infty&(\cP,\Sigma_\cP)|\theta_i \mid (\cP,\Sigma_\cP) \text{ is a countable hod pair such }  \\ & \text{that } \Sigma_\cP \text{ has branch condensation and is fullness} \\ & \;\;\;\text{preserving and } w(Code(\Sigma_\cP)) = \theta_i \}. 
\end{align*}
% Note: Our $V$ will not satisfy $V = L(\Pot(\bR))$.
Here $w(Code(\Sigma_\cP))$ denotes the Wadge-rank of the canonical code of $\Sigma_\cP$ as a set of reals. Note that $\cM_\infty$ is well-defined as the direct limit models $\cM_\infty(\cP, \Sigma_\cP)$ line up and do not depend on the choice of $(\cP, \Sigma_\cP)$ (cf. \cite[Lemma 0.23 and Theorem 4.24]{Sa15}). In fact, by the results of \cite{Sa15}, $V_\Theta^{\HOD} = \cM_\infty$ and there is a canonical iteration strategy, which we call $\Sigma$, for $\cM_\infty$ given by the join of the strategies for its hod initial segments (see also \cite[Section 3]{Tr14} on how $\cM_\infty$ can be represented as a direct limit of a directed system of hod pairs). % Also look at Section 0.6 in \cite{Sa15}.
%\later{Maybe we need one of Steel's other papers here that followed \cite{St22}, but at this level \cite{Sa15} should be enough anyway.}

As usual, each $\theta_i$ in our background universe is a Woodin cardinal in $\cM_\infty$ and we write $(\delta_i \mid i<\omega)$ for the sequence of Woodin cardinals in $\cM_\infty$.
% Note that $\cM_\infty$ is equal to the direct limit under the iteration embeddings of all hod pairs $(\cP,\Sigma)$ such that $\Sigma$ has branch condensation and is fullness preserving. 
% This seems to be wrong in our case, I think this direct limit would yield $J^\Sigma[\cM_\infty]$.
% See the bottom of p. 327 in \cite{St16} or the before the proof of Theorem 1.1 in \cite{Tr14}

Note that the Woodin cardinals $\delta_i$, $i<\omega$, in $\cM_\infty$ are cutpoints, so the strategy $\Sigma$ of $\cM_\infty$ naturally splits into strategies $\Sigma_i$ between consecutive Woodin cardinals $\delta_{i-1}$ and $\delta_i$, letting $\delta_{-1} = 0$. In particular, $\cM_\infty$ can see its own iteration strategy.

Universal Baireness ensures that we can canonically extend each $\Sigma_i$ to an iteration strategy acting on stacks of normal iteration trees of length $\lambda$ for any ordinal $\lambda$. See \cite[Lemma 1.21]{Sa13} for this argument and in particular for how iteration strategies need to be coded as sets of reals to make the coding sufficiently absolute. We also write $\Sigma_i$ for this canonical extension of the original strategy $\Sigma_i$ of $\cM_\infty$ between $\delta_{i-1}$ and $\delta_i$ as well as $\Sigma$ for the resulting extension of the original strategy $\Sigma$ of $\cM_\infty$. 

The following lemma now easily follows from the fact that we are working in a model of ``all sets of reals are universally Baire''.

\begin{lemma}\label{lem:OrdOrdStrategyMinfty}
  $\Sigma$ is an $(\Ord, \Ord)$-iteration strategy for $\cM_\infty$.
\end{lemma}

Moreover, by our choice of the strategies $\Sigma_i$ (in particular, by the fact that they have branch condensation in the sense of \cite[Definition 2.14]{Sa15}) and generic interpretability for hod mice (see \cite[Theorem 3.10]{Sa15}) we have the following lemma.

\begin{lemma}\label{lem:MinftyclosedunderLambda}
  $\cM_\infty$ is generically closed under $\Sigma$.
\end{lemma}

%\begin{remark*}
%  We believe that $\HOD$ of the minimal model of ``$\AD_\bR +$ all sets are universally Baire'' is $L^\Lambda(\cM_\infty)$ but we will not need that in what follows.
%\end{remark*}

%\later{We need that the strategies $\Sigma^i$ have hull condensation, this is included in the definition of being a hod pair.}

Now we are ready to define translatable structures. This is crucial for the rest of this article as a translatable structure is what we want to translate into a model with a cardinal that is both a limit of Woodin cardinals and a limit of strong cardinals.

\begin{definition}\label{def:translatablestructure}
  Let $\cW$ be a proper class hybrid strategy premouse in the sense of \cite[Chapter 1]{Sa15}. Then $\cW$ is a \emph{translatable structure} iff there is a sequence of ordinals $(\delta_i \mid i<\omega)$ such that $\cW$ satisfies the following conditions.
  \begin{enumerate}
  \item $\cW \vDash \text{``} \delta_i$ is a Woodin cardinal and a cutpoint for every $i<\omega$ and these are the only Woodin cardinals''.
  \item In $\cW$, let $\cP^0 = Lp_\omega^\infty(\cW|\delta_0)$ and $\cP^i = Lp_\omega^{\Sigma_{i-1}, \infty}(\cW|\delta_{i})$ for $i > 0$, where $\Sigma_i$ denotes the strategy for $\cP^i$. Then $\cP^0$ is suitable at $\kappa_0$ and $\cP^i$ is $\Sigma_{i-1}$-suitable at $\kappa_i$ for every $i > 0$.
  \item $\Sigma_i$ is a super fullness preserving iteration strategy for $\cP^i$ with hull condensation such that $\cW$ is generically closed under $\Sigma_i$ and there is a set of term relations $\vec\tau^{(i)} \subseteq \cP^i$ such that whenever $g$ is $\Col(\omega, \cP^i)$-generic over $\cW$ and $(\tau_k^{(i)} \mid k<\omega)$ is a generic enumeration of $\vec\tau^{(i)}$ in $\cW[g]$, then $(\Sigma_i)^g$ is strongly guided by $(\tau_k^{(i)} \mid k < \omega)$.
  \item $\cW$ is internally $(\Ord, \Ord)$-iterable.
  \item \label{cl:5_def:translatablestructure} $\cW \vDash$ ``there is no inner model with a superstrong cardinal''.\footnote{In this article, we are aiming for the consistency of a theory weaker than the existence of a superstrong cardinal, so we will never consider any inner models not satisfying \eqref{cl:5_def:translatablestructure} and this condition is redundant here.}
  \item Let $\delta_\omega $ be the limit of $\delta_i$, $i<\omega$. Let $G$ be $\Col(\omega,{<}\delta_\omega)$-generic over $\cW$ and let $M$ be the derived model computed in $\cW(\bR^*)$. Let $\Phi = (\Sigma^\cW)^G \upharpoonright \HC^M$ for $\Sigma^\cW = \bigoplus_{i<\omega} \Sigma_i$. Then $\Phi \in M$ and in $M$, $Lp_\omega^\infty(\cW | \delta_i)$ is a $\kappa_i$-suitable $\emptyset$-iterable $\Phi$-premouse (cf. \cite[Definition 2.7]{Sa17} for the definition of $\emptyset$-iterability). \label{eq:translatablestructure6}
  \end{enumerate}
%A sequence $\vec A$ of $\OD$ sets that strongly guides $\Sigma$.
\end{definition}

\begin{remark}
We analogously define set-sized translatable structures $\cW$ that satisfy a sufficiently large fragment of $\ZFC$.
\end{remark}

Recall that $(\cM_\infty, \Sigma)$ is a hod pair. It follows from the fullness of $\cM_\infty$ with respect to $\Sigma$ that constructing relative to the strategy $\Sigma$ above $\cM_\infty$ does not project across $\cM_\infty \cap \Ord$.

\begin{lemma}\label{lem:ConstructionDoesNotProjectAcrossMinfty}
   The construction of $L^\Sigma[\cM_\infty]$ above $\cM_\infty$ does not project across $\cM_\infty \cap \Ord$.
\end{lemma}
\begin{proof}
    Suppose otherwise, i.e., there is a $\xi \geq 1$ such that $J_\xi^\Sigma[\cM_\infty]$ projects across $\cM_\infty \cap \Ord$. Then $J_\xi^\Sigma[\cM_\infty]$ is an anomalous hod premouse of type III (see Definition 3.23 in \cite{Sa15}) and using fullness preservation of $\Sigma$ we can follow the proof of Theorem 6.1 in \cite{Sa15} (see also the argument for Claim 2 in the proof of Lemma 6.23 in \cite{Sa15}) to get a contradiction.
\end{proof}

 Lemmas \ref{lem:OrdOrdStrategyMinfty}, \ref{lem:MinftyclosedunderLambda}, and \ref{lem:ConstructionDoesNotProjectAcrossMinfty}, together with general properties of hod mice from \cite{Sa15}, imply that $L^\Sigma[\cM_\infty]$ is a translatable structure.

\begin{corollary}
Working in a model of ``$\AD_\bR \, +$ all sets of reals are universally Baire'' and defining $\cM_\infty$ and $\Sigma$ as above, $L^\Sigma[\cM_\infty]$ is a translatable structure.
\end{corollary}

%\later{Argue that $L^\Lambda[\cM_\infty]$ does not project across $\cM_\infty \cap \Ord$ to conclude that $L^\Lambda[\cM_\infty]$ is a translatable structure as desired. This should follow from the fact that $\cM_\infty$ is full with respect to $\Lambda$ by the HOD analysis.}

\section{The translation procedure}

For the rest of this article, fix a translatable structure $\cW$ and let $\Sigma = \bigoplus_{i<\omega} \Sigma_i$ be the associated iteration strategy. The extenders we want to use to witness that certain cardinals are strong in the translated structure are of the following form:

\begin{definition}\label{def:genctblecompleteness}
  We say a $(\kappa, \lambda)$-extender $E$ is \emph{generically countably complete} if it is countably complete in all ${<}\kappa$-generic extensions, i.e., whenever $\bP$ is a partial order with $|\bP| < \kappa$ and $G$ is $\bP$-generic over $V$, then in $V[G]$, for every sequence $(a_i \mid i < \omega)$ of sets in $[\lambda]^{<\omega}$ and every sequence $(A_i \mid i < \omega)$ of sets $A_i \in E_{a_i}$ there is a function $\tau \colon \bigcup_i a_i \rightarrow \kappa$ such that $\tau \pwimg a_i \in A_i$ for each $i < \omega$.
\end{definition}

%\prpl{The advantage of taking generically countably complete extenders instead of countably complete extenders should be that we can get a realization in a generic extension for an ultrapower of $\bar\cM$ that is countable in the generic extension (so it need not be countable in $V$).}

It is a well-known fact for countably complete extenders that they provide realizations, see for example \cite[Lemma 10.64]{Sch14}. We will crucially use (a generalization of) this in the proof of Lemma \ref{lem:iterability} to obtain iterability for our translated structures.

\begin{fact}\label{fact:realization}
  Let $\cM$ be a premouse and $\bar\cM$ the transitive collapse of a countable elementary substructure of $\cM$. Let $\sigma \colon \bar\cM \rightarrow \cM$ be the uncollapse map. Then for every extender $E \in \bar\cM$ such that $\sigma(E)$ is countably complete, if $\pi_E \colon \bar\cM \rightarrow \Ult(\bar\cM, E)$ denotes the ultrapower embedding, there is a realization map $\sigma^* \colon \Ult(\bar\cM,E) \rightarrow \cM$, cf. Figure \ref{fig:RealizationDiagram}.
\end{fact}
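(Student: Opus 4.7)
The plan is to construct $\sigma^*$ by the standard seed-realization recipe: exploit the countability of $\bar\cM$ to list all relevant measure-one sets, apply countable completeness of $\sigma(E)$ to obtain a single function realizing all these sets simultaneously, and define $\sigma^*$ on representatives $[a,f]_E^{\bar\cM}$ by pushing forward under $\sigma$ and evaluating at the seed.

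More concretely, first I would enumerate $\bar\cM$ as $(x_n \mid n<\omega)$ and read off from this enumeration a list $((a_n, A_n) \mid n<\omega)$ of all pairs with $a_n \in [\lh(E)]^{<\omega}$ and $A_n \in E_{a_n}$. Since $\sigma$ is at least $\Sigma_0$-preserving for these basic extender-theoretic statements, each $\sigma(A_n)$ belongs to $\sigma(E)_{\sigma(a_n)}$. Now apply the countable completeness of $\sigma(E)$ (which, by Definition \ref{def:genctblecompleteness} applied with the trivial forcing, holds in $V$) to obtain a function $\tau \colon \bigcup_n \sigma(a_n) \to \crit(\sigma(E))$ with $\tau\pwimg \sigma(a_n) \in \sigma(A_n)$ for every $n<\omega$. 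I then define
\[ \sigma^*\bigl([a,f]_E^{\bar\cM}\bigr) \;=\; \sigma(f)\bigl(\tau\pwimg \sigma(a)\bigr), \]
for any $a \in [\lh(E)]^{<\omega}$ and any function $f \in \bar\cM$ with appropriate domain.

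Next I would check that $\sigma^*$ is well-defined and $\Sigma_n$-elementary. Well-definedness and $\Sigma_n$-elementarity reduce to a single \L{}o\'s-style computation: for a $\Sigma_n$ formula $\varphi$ and representatives $[a_1,f_1],\dots,[a_k,f_k]$, let $a = a_1 \cup \dots \cup a_k$ and
\[ A \;=\; \{ u \in [\crit(E)]^{|a|} \mid \bar\cM \vDash \varphi(f_1^{a,a_1}(u), \dots, f_k^{a,a_k}(u)) \}, \]
where $f_i^{a,a_i}$ denotes the canonical reindexing. Then $\Ult(\bar\cM,E) \vDash \varphi([a_1,f_1],\dots,[a_k,f_k])$ iff $A \in E_a$. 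Since $(a, A)$ appears in our enumeration, $\tau\pwimg \sigma(a) \in \sigma(A)$, and since $\sigma$ is $\Sigma_n$-elementary, $\sigma(A)$ is defined by the same formula $\varphi$ over $\cM$ relative to $\sigma(f_1),\dots,\sigma(f_k)$. This yields $\cM \vDash \varphi(\sigma(f_1)(\tau\pwimg \sigma(a_1)), \dots, \sigma(f_k)(\tau\pwimg \sigma(a_k)))$, which simultaneously gives equality (take $\varphi$ as ``$=$''), set-membership preservation (take $\varphi$ as ``$\in$''), and $\Sigma_n$-elementarity of $\sigma^*$.

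Finally, I would verify the commutativity $\sigma^* \circ \pi_E = \sigma$: for $x \in \bar\cM$, $\pi_E(x) = [\{\alpha\}, c_x]_E$ where $c_x$ is the constant function with value $x$ and $\alpha < \crit(E)$ is arbitrary, so $\sigma^*(\pi_E(x)) = \sigma(c_x)(\tau(\sigma(\{\alpha\}))) = \sigma(x)$ by the definition of $\sigma^*$. The only real obstacle is the bookkeeping around fine structure: in the premouse setting one must make sure the ultrapower $\Ult(\bar\cM, E)$ has the right ``$\Sigma_n$-functions'' and that $\sigma$ preserves the formulas used in the \L{}o\'s computation at the correct level of the projectum hierarchy. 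This is handled in the standard way (as in, e.g., \cite[Lemma 10.64]{Sch14}) by working with the $r\Sigma_n$ or $\Sigma_n^{(k)}$ theory appropriate to the indexing used, and the remaining checks are routine.
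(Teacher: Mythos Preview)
Your proposal is correct and follows the standard seed-realization construction. The paper itself does not prove Fact~\ref{fact:realization}; it is stated as a well-known fact with a reference to \cite[Lemma 10.64]{Sch14}. However, the paper does spell out essentially this same argument later, in the proof of Claim~\ref{cl:realizationtau} inside Lemma~\ref{lem:iterability}, where the realization map is defined by $[a,f] \mapsto \iota_\eta(f)(\rho \pwimg \iota_\xi(a))$ and elementarity is checked via the same \L{}o\'s-style computation you give. Your write-up is thus in line with both the cited source and the paper's own use of the construction.
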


\begin{figure}[htb]
      \begin{tikzpicture}
        \draw[->] (0,-1.55) -- node[above] {$\sigma$} (3.75,0) node[right]
        {$\cM$}; 

        \draw[->] (0,-1.75) node[left] {$\bar\cM$}-- node[below] {$\pi_E$} (3.3,-1.75) node[right] {$\Ult(\bar\cM,E)$};
       
        \draw[->] (4.1,-1.4) -- node[right] {$\sigma^*$} (4.1,-0.3);
      \end{tikzpicture}
      \caption{Realizing a countably complete extender.}\label{fig:RealizationDiagram}
    \end{figure}

Now we can define our translation procedure. We aim to construct a premouse with strong cardinals $\kappa_i$, $i < \omega$, inside $\cW$. The cardinals $\kappa_i$ are going to be the least ${<}\delta_{i+1}$-strong cardinals in $\cW$. The idea is that extenders with critical point $\kappa \neq \kappa_i$ for all $i < \omega$ come from a standard fully backgrounded extender construction as in \cite{MS94} (without the smallness assumption). In addition, we add extra extenders witnessing that $\kappa_i$ is strong for each $i<\omega$. These extenders with critical point $\kappa_i$ are still fully backgrounded in case they are added in the construction below the next Woodin cardinal $\delta_{i+1}$. These extenders will not overlap $\delta_{i+1}$. At levels of the construction past $\delta_{i+1}$ all extenders that are added with critical point $\kappa_i$ are generically countably complete (and these extenders will overlap $\delta_{i+1}$). This will later allow us to prove that certain elementary substructures of the translated premouse are in fact sufficiently iterable (cf. Section \ref{subsec:iterability}) to show that the construction converges and the cardinals $\kappa_i$ are indeed fully strong\footnote{``Fully strong'' simply means ``strong'' but we decided to highlight this here as well as at some places below in order to avoid confusion with the statement that $\kappa_i$ is ${<}\delta_{i+1}$-strong.} (cf. Section \ref{subsec:constructionworks}). 

%\later{We have that $\kappa_i$ is ${<}\delta_{i+1}$-strong in $\cW$, so there are extenders witnessing this which we can add to the $\cM$-construction. Argue that the fully backgrounded extender construction from \cite{MS94} actually does that. This probably uses that we have plenty of other measurable (and even strong as well as Woodin) cardinals above $\kappa_i$. We cannot use Steel's local $K^c$-construction as we want to lift to trees on the background. Moreover, we cannot use Schlutzenberg's construction for inheriting strong cardinals as it is not local as need not inherit ${<}\lambda$-strong cardinals for every $\lambda$.}

\begin{definition}\label{def:translationprocedure}
  Let $\cW$ be a translatable structure with Woodin cardinals $(\delta_i \mid i < \omega)$. Let $(\kappa_i \mid i < \omega)$ be a sequence of cardinals such that for each $i < \omega$, $\kappa_i$ is the least ${<}\delta_{i+1}$-strong cardinal above $\delta_i$ in $\cW$. We say $(\cM_\xi, \cN_\xi \mid \xi \leq \Xi)$ is the \emph{mouse construction} (or short \emph{$\cM$-construction}) in $\cW$ for $\Xi \leq \Ord$ if it is given inductively as follows.
  \begin{enumerate}
  \item $\cN_0$ is the result of a fully backgrounded extender construction in $\cW | \delta_1$.
  \item For $\xi+1 < \Xi$, if $\cM_\xi = (J_\alpha^{\vec E}, \in, \vec E, \emptyset)$ is a passive premouse, we define $\cN_{\xi+1}$ as follows. 
    \begin{enumerate}
    \item \label{item:backgroundext_critnotkappai} If there is an extender $F^*$ on the sequence of $\cW$ and an extender $F$ over $\cM_\xi$, both with critical point $\kappa \notin \{\kappa_i \mid i < \omega\}$,\footnote{In this case, we just follow the standard fully backgrounded construction in \cite{MS94} without the smallness assumption.} such that $(J_\alpha^{\vec E}, \in, \vec E, F)$ is a premouse and for some $\nu < \alpha$, $V_{\nu+\omega}^\cW \subseteq \Ult(\cW,F^*)$ and \[ F \upharpoonright \nu = F^* \cap ([\nu]^{<\omega} \times J_\alpha^{\vec E}), \] we let\footnote{In \cite{MS94}, the authors choose $F^*$ and $F$ as above with the minimal $\nu$ to get uniqueness of the next extender in the construction. This condition is omitted in the constructions in \cite{Sa15} and \cite{Sa17}. Farmer Schlutzenberg showed that this condition is not needed for uniqueness of the next extender.} \[\cN_{\xi+1} = (J_{\alpha}^{\vec E}, \in, \vec E, F).\] 
    \item \label{item:backgroundext_critkappai} If \eqref{item:backgroundext_critnotkappai} fails and there is an extender $F^*$ on the sequence of $\cW$ and an extender $F$ over $\cM_\xi$, both with critical point $\kappa = \kappa_i$ for some $i < \omega$ with $\alpha < \delta_{i+1}$,\footnote{In this case, we still follow the fully backgrounded construction as the new extender will not overlap any of the Woodin cardinals $\delta_i$.} such that $(J_\alpha^{\vec E}, \in, \vec E, F)$ is a premouse and for some $\nu < \alpha$, $V_{\nu+\omega}^\cW \subseteq \Ult(\cW,F^*)$ and \[ F \upharpoonright \nu = F^* \cap ([\nu]^{<\omega} \times J_\alpha^{\vec E}), \] we let \[\cN_{\xi+1} = (J_{\alpha}^{\vec E}, \in, \vec E, F).\] 
    \item \label{item:strongext} If \eqref{item:backgroundext_critnotkappai} and \eqref{item:backgroundext_critkappai} fail and there is a generically countably complete extender $F$ in $\cW$ with critical point $\kappa_i$ for some $i < \omega$ such that $\alpha > \delta_{i+1}$ and
      $(J_{\alpha}^{\vec E}, \in, \vec E, F)$ is a premouse, we let \[\cN_{\xi+1} = (J_{\alpha}^{\vec E}, \in, \vec E, F).\] 
    \item If \eqref{item:backgroundext_critnotkappai}, \eqref{item:backgroundext_critkappai} and \eqref{item:strongext} fail, we let \[\cN_{\xi+1} = (J_{\alpha+1}^{\vec E}, \in, \vec E, \emptyset).\] 
    \end{enumerate}
    In all cases, we %demand that $\cN_{\xi+1}$ is $\omega$-solid and
    let $\cM_{\xi+1} = \bC_\omega(\cN_{\xi+1})$, if it is defined.
  \item For $\xi+1 < \Xi$, if $\cM_\xi = (J_\alpha^{\vec E}, \in, \vec E, F)$ is active, we let \[\cN_{\xi+1} = (J_{\alpha+1}^{\vec E^\prime}, \in, \vec E^{\prime}, \emptyset)\] for $\vec E^\prime = \vec E^\frown F$. Moreover, %$\cN_{\xi+1}$ is $\omega$-solid and
    we let $\cM_{\xi+1} = \bC_\omega(\cN_{\xi+1})$, if it is defined.
  \item If $\lambda \leq \Xi$ is a limit ordinal or $\lambda = \Ord$, we let $\cN_\lambda = \cM_\lambda$ be the unique passive premouse such that for all ordinals $\beta$, $\omega\beta \in \cN_\lambda \cap \Ord$ iff $J_\beta^{\cN_\alpha}$ is defined and eventually constant as $\alpha$ converges to $\lambda$, and for all such $\omega\beta \in \cN_\lambda \cap \Ord$, $J_\beta^{\cN_\lambda}$ is given by the eventual value of $J_\beta^{\cN_\alpha}$ as $\alpha$ converges to $\lambda$.
  \end{enumerate}
  We say the $\cM$-construction breaks down and stop the construction if $\bC_\omega(\cN_\xi)$ is not defined for some $\xi \in \Ord$.
  % Do we want to say something about the projectum here?
  Otherwise we say the $\cM$-construction converges and $\cM = \cM_{\Ord}$ is the result of the construction.
\end{definition}

\begin{remark}
We can similarly perform $\cM$-constructions in set-sized translatable structures that satisfy a sufficiently large fragment of $\ZFC$.
\end{remark}

Recall that we fixed a translatable structure $\cW$. The purpose of the rest of this paper is to show that an $\cM$-construction inside a translatable structure produces the desired output, i.e., 
\begin{enumerate}[(a)]
    \item it results in a proper class model $\cM$ (i.e., $\Xi = \Ord$),
    \item in $\cM$, all $\kappa_i$ for $i<\omega$ are strong cardinals.
\end{enumerate}
That all $\delta_i$ for $i<\omega$ are Woodin cardinals in $\cM$ follows from the arguments in \cite{MS94}. 

The argument in the rest of this paper is organized inductively on $i<\omega$. That means, at stage $i$ of the argument we inductively assume that if $\cM^{(i)}$ denotes the result of an $\cM$-construction where Clause \eqref{item:strongext} is restricted to extenders with critical point in $\{\kappa_j \mid j<i\}$,
\begin{description}
    \item[$(\mathsf{IH}.1)_i$] for every $j<i$ and every $\bar\cW$ the transitive collapse of a hull of $\cW|\Omega$ for some sufficiently large $\Omega$ such that $\cW|\kappa_j \subseteq \bar\cW$, if \[\pi \colon \bar\cW \rightarrow \cW\] denotes the uncollapse embedding, $\bar\cM$ the collapse of the result of an $\cM$-construction where Clause \eqref{item:strongext} is restricted to extenders with critical point in $\{\kappa_j \mid j<i\}$ in $\cW|\Omega$ and $(\bar\kappa_l, \bar\delta_l \mid l<\omega)$ the preimages of $(\kappa_l, \delta_l \mid l<\omega)$, then \[\bar\cM \text{ is } (\delta_{j+1}, \delta_{j+1})\text{-iterable} \] with respect to extenders with critical point above $\kappa_j$. 
\end{description}
By running the proofs of Lemmas \ref{lem:constructionconverges} and \ref{lem:kappaiarestrong} this allows us to inductively assume that
\begin{description}
    \item[$(\mathsf{IH}.2)_i$] the construction of $\cM^{(i)}$ does not break down, and
    \item[$(\mathsf{IH}.3)_i$] all $\kappa_j$ for $j<i$ are strong in $\cM^{(i)}$.
\end{description}
For each $i<\omega$, write $(\mathsf{IH})_i$ for the conjuction of $(\mathsf{IH}.1)_i$, $(\mathsf{IH}.2)_i$, and $(\mathsf{IH}.3)_i$.   

\section{Characterizing extenders}\label{sec:directlimitsystems}

In order to prove that the result of an $\cM$-construction in a translatable structure has a limit of strong cardinals, we need a better understanding of the extenders on its sequence. To characterize the extenders with critical points that are the different ``future strong cardinals'' $\kappa_i$, $i<\omega$, we first define direct limit systems and internalize them to $\cM$.

\subsection{Direct limit systems at $\kappa_0$}\label{subsec:i=0}
%Define direct limit systems of hod mice or of the current level (both at one of the distinct Woodin cardinals) we translated already. Check carefully where exactly we want to iterate and what model we want to use here. Argue that we can write the direct limit as a quasi limit of hod pair constructions in sufficiently nice windows between the Woodin and the strong cardinal. Quasi limit here means that we take the value of points at the model in the system from which on it is fixed. We know that we fix certain term relations. (Note: The hod pair constructions are done in the model $\cM$ using the piece of the strategy $\cM$ can see. Check this.) {\color{red}Check again why we need to write the direct limits as these quasi limits.}

For simplicity, we start with a direct limit system at $\kappa_0$, the first cardinal that is strong up to $\delta_1$ in $\cW$.\footnote{Note that by a short argument due to Farmer Schlutzenberg in this setting $\kappa_0$ will also be the first cardinal that is strong up to $\delta_1$ in $\cM^{(0)}$ since the fully backgrounded construction will add extenders witnessing this.} The general case for $\kappa_i$, $i>0$, will be handled in the next section. For this case with $i=0$ we do not require any inductive hypothesis (note that $(\mathsf{IH})_0$ is an empty condition). Recall, that $\cM^{(0)}$ denotes the result of an $\cM$-construction where Clause \eqref{item:strongext} is restricted to extenders with critical point $\kappa_0$. Note that $\cM^{(0)}$ is not necessarily a proper class model at this point. Showing that the construction does not break down and $\cM^{(0)}$ is indeed a proper class model is one of the applications of the results in this subsection.

We start working in $\cW$. Let $\cP^0_\infty$ be the direct limit of all $\Sigma_0$-iterates of $\cP^0 = Lp^\infty_\omega(\cW | \delta_0)$ that are countable in $\cW[g]$ for some generic $g \subseteq \Col(\omega, {<}\kappa_0)$. By standard arguments this direct limit is well-defined and well-founded, see, for example, \cite{StW16}. Let $\delta_\infty^0$ be the image of $\delta_0$ in $\cP_\infty^0$ and write \[ \cP_\infty^{0,-} = \cP_\infty^0 | \delta_\infty^0. \]
%It will later be useful for us to consider $\cM^0_\infty$ as a direct limit of a different direct limit system as follows.

 Note that $\cM^{(0)}|\delta_1 = \cM|\delta_1$ is equal to the result of a fully backgrounded extender construction in $\cW | \delta_1$ (cf. \cite[Lemma 2.21]{MSW}). Therefore, the $\cM$-construction as well as the construction of $\cM^{(0)}$ succeed up to $\delta_1$ and $\cM^{(0)}|\delta_1 = \cM|\delta_1$ are well-defined. Also, if both constructions succeed far enough, $\cM^{(0)}|\delta_2 = \cM |\delta_2$.
 By the argument in \cite[Lemma 2.14]{Sa17}, $\kappa_0$ is a limit of Woodin cardinals in $\cM^{(0)}$. 
 We aim to identify $\cP^{0}_\infty$ inside $\cM^{(0)}$ as a direct limit of models such that densely many of these models can be obtained via certain fully backgrounded constructions as follows. %Working in $\cM$, we say an ordinal $\delta^* < \kappa_0$ is \emph{$\kappa_0$-good} iff $\delta^*$ is the least Woodin cardinal above $\nu$ for some cutpoint $\nu$ that is not a limit of Woodin cardinals. 
 For any ordinal $\delta^* < \kappa_0$ that is a cutpoint in $\cM^{(0)}|\kappa_0$, let
\[ (\cL_\eta \mid \eta \leq \eta^*) \] be the result of a fully backgrounded $L[E]$-construction over $\emptyset$ in $\cM^{(0)} | \delta_1$ using extenders with critical point above $\delta^*$, in other words, $\cL_{\eta^*} = (L[E]^{\geq \delta^*}(\emptyset))^{\cM^{(0)}|\delta_1}$.
 Let $\gamma \leq \eta^*$ be the least ordinal such that \[ (Lp_\omega^{\infty}(\cL_\gamma))^\cW \models \text{``$\cL_\gamma \cap \Ord$ is Woodin'',} \] let $\cN_{\delta^*} = \cL_\gamma$ and write \[ \cN_{\delta^*}^+ = (Lp_\omega^{\infty}(\cL_\gamma))^\cW. \]
 As $\kappa_0$ is a limit of Woodin cardinals in $\cM^{(0)}$ (cf. \cite[Lemma 2.14]{Sa17})\footnote{We will prove a weaker statement that still suffices to show that these constructions reach Woodin cardinals for the general case $i>0$ in the next subsection.} and the least ${<}\delta_1$-strong cardinal in $\cW$, there are such Woodin cardinals in $(Lp_\omega^{\infty}(\cL_\gamma))^\cW$ cofinally below $\kappa_0$. In particular, for any cutpoint $\delta^* < \kappa_0$ in $\cM^{(0)}$ and $\gamma \leq \eta^*$ as above, $\cL_\gamma \cap \Ord < \kappa_0$. We start with arguing that these models $\cN_{\delta^*}$ and $\cN_{\delta^*}^+$ are in fact inside $\cM^{(0)}$. This is a consequence of the following more general lemma, see also \cite[Lemma 2.12]{Sa17}.

 \begin{lemma}\label{lem:Lpatkappa0inM}
 The function $X \mapsto (Lp^{\infty}(X))^\cW$ for $X \in \cM^{(0)}|\kappa_0$ is definable over $\cM^{(0)}|\delta_1$.
 %    Let $\delta^*$ be a $\kappa_0$-good ordinal in $\cM$ and let $\cL_{\eta^*} = (L[E]^{\geq \delta^*}(\emptyset))^{\cM|\delta_1}$ be the result of a fully backgrounded $L[E]$-construction over $\emptyset$ in $\cM | \delta_1$ using extenders with critical point above $\delta^*$. Let $\cN_{\delta^*}^+ = (Lp_\omega^{\infty}(\cL_\gamma))^\cW$ be as above. Then $\cN_{\delta^*}^+ \in \cL_{\eta^*}$ and hence $\cN_{\delta^*}^+ \in \cM$.
 \end{lemma}
 \begin{proof}
    Let $\delta^* < \kappa_0$ be a cutpoint in $\cM^{(0)}|\delta_1$ with $X \cap \Ord < \delta^*$ and let $\cL_{\eta^*} = (L[E]^{\geq \delta^*}(X))^{\cM^{(0)}|\delta_1}$ be the result of a fully backgrounded $L[E]$-construction over $X$ in $\cM^{(0)} | \delta_1$ using extenders with critical point above $\delta^*$.
    Let
    \begin{eqnarray*}
    \mathcal{O}^{\cL_{\eta^*}}_\eta = \bigcup\{ \cN \unlhd \cL_{\eta^*} \mid \cL_{\eta} \unlhd \cN, \rho_\omega(\cN) \leq \cL_\eta \cap \Ord, \text{ and }
    \text{there is no}\text{ extender } E \\ \text{ on the sequence of } \cN \text{ with }\crit(E) \leq \cL_\eta \cap \Ord < \lh(E) \}. 
    \end{eqnarray*} 
    We first show the following claim.

    \begin{claim}
      Let $\eta > X \cap \Ord$ be a cardinal in $\cM^{(0)}|\kappa_0$. Then
        \[ \mathcal{O}^{\cL_{\eta^*}}_\eta = (Lp^{\infty}(\cL_\eta))^{\cW}. \]
    \end{claim}
    \begin{proof}
%As $\eta$ is Woodin in $\cM$, $\cL_{\eta^*} | \eta = \cL_\eta$ (cf. \cite[Lemma 2.21]{MSW}) and 
The direction $\mathcal{O}^{\cL_{\eta^*}}_\eta \unlhd (Lp^{\infty}(\cL_\eta))^{\cW}$ is clear.
For the other direction, fix an arbitrary sound $\cN \unlhd Lp^{\infty}(\cL_\eta)$ in $\cW$ with $\cL_\eta \unlhd \cN$ and $\rho_\omega(\cN) = \cL_\eta \cap \Ord$.
Let $E$ be the index-least extender on the extender sequence of $\cL_{\eta^*}$ such that $\cL_\eta \cap \Ord$ is a cutpoint in $\Ult(\cL_{\eta^*}, E)$.
It suffices to show that $\cN \unlhd \Ult(\cL_{\eta^*}, E)$.
Note that both $\cN$ and $\Ult(\cL_{\eta^*}, E)$ are fully iterable in $\cW$ above $\cL_\eta \cap \Ord$ as the extenders used to construct $\cL_{\eta^*}$ are fully backgrounded by extenders in $\cM^{(0)}|\delta_1$ which are themselves fully backgrounded by extenders in $\cW$. This uses the observation that the extenders in $\cM^{(0)}$ with index below $\delta_1$ are all added by the fully backgrounded part of the $\cM$-construction.

By universality of fully backgrounded constructions (cf. \cite[Lemma 2.13]{Sa15}, this uses that there are no inner models with a superstrong cardinal in $\cW$), $\Ult(\cL_{\eta^*},E)$ wins the coiteration with $\cN$. As $\rho_\omega(\cN) = \cL_\eta \cap \Ord$ and the coiteration is above $\cL_\eta \cap \Ord$, the $\cN$-side cannot move as otherwise it would have to drop. Suppose the $\Ult(\cL_{\eta^*},E)$-side moves to an iterate $\cW^*$ of $\Ult(\cL_{\eta^*},E)$ such that $\cN \unlhd \cW^*$. If the iteration from $\Ult(\cL_{\eta^*},E)$ to $\cW^*$ drops on the main branch, $\cW^*$ is not fully sound, so $\cN \lhd \cW^*$. As $\rho_\omega(\cN) = \cL_\eta \cap \Ord$, $\cN$ is in fact a subset of $\cL_\eta \cap \Ord$ in $\cW^*$. But iterations above $\cL_\eta \cap \Ord$ do not add any new subsets of $\cL_\eta \cap \Ord$, thus $\cN \unlhd \Ult(\cL_{\eta^*},E)$, as desired. If the iteration from $\Ult(\cL_{\eta^*},E)$ to $\cW^*$ does not drop on the main branch, $\cW^* \cap \Ord = \cL_{\eta^*} \cap \Ord = \delta_1$ and again $\cN \lhd \cW^*$. So we can finish the argument as in the dropping case.     
  \end{proof}
 The lemma now follows from the next claim, which is proved like Claim 2.13 in the proof of \cite[Lemma 2.12]{Sa17}. It uses Clause \eqref{eq:translatablestructure6} in Definition \ref{def:translatablestructure} and generic interpretability of the involved iteration strategies with branch condensation.

 \begin{claim}
      For any $X \in \cM^{(0)}|\kappa_0$, $\cN \unlhd (Lp^{\infty}(X))^\cW$ if, and only if, there is some $\cR \unlhd \mathcal{O}^{\cL_{\eta^*}}_\eta$ such that 
      \begin{enumerate}[(1)]
\item $\rho_\omega(\cR) = \cL_\eta \cap \Ord$,
\item $\cR$ has $\omega$ Woodin cardinals with supremum $\nu$,
\item $\cN \in \cR$, and
\item $\cR \models \text{``}\cN \text{ is } \omega_1\text{-iterable in the derived model at }\nu\text{''}$.
 \end{enumerate}
 \end{claim}
 This finishes the proof of Lemma \ref{lem:Lpatkappa0inM}.
 \end{proof}

 We start internalizing $\cP_\infty^{0}$ into $\cM^{(0)}$ with the following observation, that still takes place in $\cW$.

 \begin{lemma}\label{lem:Ndelta*suitable}
  Let $\delta^* < \kappa_0$ be a cutpoint in $\cM^{(0)}$. Then, in $\cW$, $\cN_{\delta^*}^+$ is a $\Sigma_0$-iterate of $\cP^0$ and the iteration does not drop on the main branch.
\end{lemma}
\begin{proof}
  Work in $\cW$ and compare $\cM^{(0)}|\delta_0$ against the construction $(\cL_\eta \mid \eta \leq \eta^*)$ giving rise to $\cN_{\delta^*}$. By stationarity and universality of fully backgrounded constructions\footnote{Cf. \cite[Lemmas 2.11 and 2.13]{Sa15}, note that these results also hold in our setting where $\cN_{\delta^*}$ is the result of a fully backgrounded construction in $\cM^{(0)}|\delta_1$, that itself is an initial segment of the result of a fully backgrounded construction in $\cW$.}, $\cM^{(0)}|\delta_0$ iterates via $\Sigma_0$ without drops on the main branch to some level of the construction $(\cL_\eta \mid \eta \leq \eta^*)$. As $\cN_{\delta^*}^+ = Lp_\omega^\infty(\cL_\gamma)$ for $\gamma$ the least ordinal such that $\cL_\gamma \cap \Ord$ is Woodin in $Lp_\omega^\infty(\cL_\gamma)$, it follows by super fullness preservation of $\Sigma_0$ that in fact $\cP^0 = Lp_\omega^\infty(\cW|\delta_0)$ iterates via $\Sigma_0$ to $\cN_{\delta^*}^+$.
\end{proof}

 Similarly, we obtain the following lemma.

\begin{lemma}\label{lem:iterationNdelta*Ndelta**}
  Let $\delta^*$ and $\delta^{**}$ be cutpoints in $\cM^{(0)}$ with $\delta^* < \delta^{**} < \kappa_0$. Then, in $\cW$, $\cN_{\delta^*}$ iterates to $\cN_{\delta^{**}}$ without drops on the main branch and via the $\cP^0$-to-$\cN_{\delta^*}^+$-tail of $\Sigma_0$. 
\end{lemma}
\begin{proof}
  Let $(\cL_\eta \mid \eta \leq \eta^*)$ and $(\cL_\eta^\prime \mid \eta \leq \eta^\prime)$ be the constructions in $\cM^{(0)}$ giving rise to $\cN_{\delta^*}$ and $\cN_{\delta^{**}}$ respectively. Working in $\cW$, compare $\cN_{\delta^*}$ against the construction $(\cL_\eta^\prime \mid \eta \leq \eta^\prime)$, using the iteration strategy for $\cN_{\delta^*}$ induced by the background strategy. By stationarity and universality of fully backgrounded constructions, see \cite[Lemmas 2.11 and 2.13]{Sa15}, $\cN_{\delta^*}$ iterates to some level of the construction $(\cL_\eta^\prime \mid \eta \leq \eta^\prime)$ without drops on the main branch. As $\Sigma_0$ has branch condensation and fully backgrounded constructions absorb strategies with branch condensation (cf. \cite[Lemma 2.15]{Sa15}) the iteration of $\cN_{\delta^*}$ into a level of $(\cL_\eta^\prime \mid \eta \leq \eta^\prime)$ is in fact a $\Sigma_0$-iteration. Using that $\Sigma_0$ is super fullness preserving, we obtain that $\cN_{\delta^*}$ iterates to $\cN_{\delta^{**}}$. 
\end{proof}

Now, we argue that we can consider $\cP_\infty^{0}$ as a direct limit of premice such that densely many of these premice are of the form $\cN_{\delta^*}$ for cutpoints $\delta^*<\kappa_0$ in $\cM^{(0)}$. Let $\delta^*_0$ be the least cutpoint in $\cM^{(0)} | \kappa_0$ above $\delta_0$. %Let $D_0 = \{ \delta^*_\eta \mid \eta < \kappa_0 \}$ be the set of cutpoints in $\cM | \kappa_0$ cofinal in $\kappa_0$ such that $\delta^*_0$ is the least cutpoint in $\cM | \kappa_0$ above $\delta_0$ and for $\eta < \kappa_0$ with $\eta > 0$, $\delta^*_\eta$ is the least cutpoint in $\cM | \kappa_0$ such that $\delta^*_\eta > \cN_{\delta^*_\xi} \cap \Ord$ for all $\xi < \eta$.
Work in $\cM^{(0)}$. We will use the definitions of $\kappa_0$-short and $\kappa_0$-maximal trees $\cT$ referring to $(Lp^\infty(\cM(\cT)))^{\cW}$ as well as other notions introduced in Section \ref{subsec:suitablepm}. By Lemma \ref{lem:Lpatkappa0inM} these notions can be defined internally in $\cM^{(0)}$. Now let $\cN^*$ be the result of making $\cM^{(0)}|\kappa_0$ generically generic over an iterate of $\cN_{\delta^*_0}^+$ in the sense of \cite[Discussion after Definition 3.36]{Sa15} via the extender algebra with many generators (see \cite[Theorem 4.5]{Fa}). By considering pseudo-genericity iterations in the sense of \cite[Theorem 3.16]{StW16} we can obtain $\cN^* \in \cM^{(0)}$. Recall that as $\cW$ is a translatable structure, there is a set of term relations strongly guiding $\Sigma_0$. In particular, there is a set of term relations $\vec\tau \subseteq \cN^*$ such that    
whenever $G \subseteq \Col(\omega, |\cN^*|)$ is generic and $(\tau_k \mid k < \omega)$ is a fixed generic enumeration of $\vec\tau$, then $(\Sigma_{\cN^*})^G$ is strongly guided by $(\tau_k \mid k < \omega)$. For $l \leq \omega$, let $f_{l} \in F(\kappa_0, \Sigma_0)$ be the function given by \[ f_{l}(\cQ) = \bigoplus_{k<l} \tau_k^{\cQ} \] for $\cQ \in S(\kappa_0, \Sigma_0)$, where, for $\tau_k$ a $\Col(\omega, ((\delta^{\cN^*})^{+n})^{\cN^*})$-term relation for some $n<\omega$, 
\begin{align*}
     \tau_k^{\cQ} = \{ (p,\sigma) \mid p \in \Col(\omega, ((\delta^\cQ)^{+n})^{\cQ}), \sigma \in \cQ^{\Col(\omega, ((\delta^\cQ)^{+n})^{\cQ})} \text{ a standard name} \\
     \text{for a real and } p \Vdash^{\cQ} \sigma \in \tau_k\}. \;\;\;\; 
\end{align*}
Note that the definition of $\tau_k^{\cQ}$ makes sense by interpreting part of the term relation $\tau_k$ as a $\Col(\omega, ((\delta^{\cQ})^{+n})^{\cQ})$-term relation.
Then $f_l \in \cM^{(0)}$ for every $l<\omega$.
Now let
\begin{align*}
    \cI = \{ (\cQ,l) \mid \cQ \text{ is $\kappa_0$-suitable, strongly $f_l$-iterable and countable in } (\cM^{(0)})^{\Col(\omega, {<}\kappa_0)} \}
\end{align*}
and for $(\cQ,l), (\cQ',l') \in \cI$ let 
\[ (\cQ,l) \leq_\cI (\cQ',l') \text{ iff } \cQ \text{ is a $(\kappa_0, \Sigma)$-correct iterate of } \cQ' \text{ and } l \leq l'.  \]
Note that the models $\cN_{\delta^*}^+$ for cutpoints $\delta^* < \kappa_0$ are cofinal in the $\Sigma_0$-iterates of $\cP^0$ that are countable in $\cW[g]$. By Lemmas \ref{lem:Ndelta*suitable} and \ref{lem:iterationNdelta*Ndelta**} combined with standard arguments\footnote{See, for example, \cite[Section 3]{StW16} or \cite[Section 4.1]{Sa15}.} on the existence of strongly $f$-iterable premice for $f \in F(\kappa_0, \Sigma_0)$, $(\cI, \leq_\cI)$ is a directed system and, in $\cW$, there are $\Sigma$-iterations between the models. In $\cM^{(0)}$, there is, for each $(\cQ,l) \leq_\cI (\cQ',l')$ a natural embedding \[ \pi_{(\cQ,l),(\cQ',l')} \colon H^{\cQ}_{f_l} \rightarrow H^{\cQ'}_{f_{l'}}. \] We let $\cF$ be the directed system of models $H^{\cQ}_{f_l}$ indexed by $(\cQ,f_l) \in (\cI, \leq_\cI)$ together with the embeddings $\pi_{(\cQ,l),(\cQ',l')}$. Let $\cP_\infty^*$ be the direct limit of $\cF$ and let $\pi_{(\cQ,l),\infty} \colon H^{\cQ}_{f_l} \rightarrow \cP_\infty^*$ be the corresponding direct limit embedding.

The following lemma, which is our desired internalization of $\cP_\infty^{0}$, now follows as in standard HOD computation arguments, see, for example, \cite[Lemma 6.32]{StW16}. The second equality in Lemma \ref{lem:quasidirectlimitkappa0} is a consequence of fullness preservation.

%In particular, this yields that the premice $\cN_{\delta^*}$ for all $\kappa_0$-good ordinals $\delta^*$ in $\cM|\kappa_0$ together with the iteration embeddings obtained in the previous lemma form a directed system in $\cM|\kappa_0$. Let $\cM_\infty^*$ be the direct limit of this directed system.

\begin{lemma}\label{lem:quasidirectlimitkappa0}
  $\cP_\infty^* = \cP^{0}_\infty = (Lp_\omega^{\infty})^\cW(\cP_\infty^{0,-})$.
\end{lemma}

\iffalse
\begin{proof}%[Proof of Lemma \ref{lem:quasidirectlimitkappa0}]
The inclusion $\cM_\infty^* \subseteq \cM^{0,-}_\infty$ follows from Lemma \ref{lem:Ndelta*suitable}. For the other inclusion, we show that the models $\cN_{\delta^*}^+$ for $\kappa_0$-good ordinals $\delta^* < \kappa_0$ in $\cM$ are cofinal in the $\Sigma$-iterates of $\cP$ that are countable in $\cW[G]$. This implies $\cM^{0,-}_\infty = \cM_\infty^*$, as desired.
  
Let $\cR$ be an arbitrary $\Sigma$-iterate of $\cP$ that is countable in $\cW[G]$ and write $\delta_\cR$ for the image of $\delta_0$ in $\cR$. In particular, $\cR \cap \Ord < \kappa_0$. As $\kappa_0$ is a limit of Woodin cardinals in $\cM|\kappa_0$ (cf. \cite[Lemma 2.14]{Sa17}), we can pick some $\kappa_0$-good ordinal $\delta^*$ with $\cR \cap \Ord < \delta^* < \kappa_0$ in $\cM$. Consider $\cN_{\delta^*}$ and compare $\cR|\delta_\cR$ against the construction $(\cL_\eta \mid \eta \leq \eta^*)$ giving rise to $\cN_{\delta^*}$. As in the proof of Lemma \ref{lem:Ndelta*suitable}, by super fullness preservation of $\Sigma$, this yields that $\cR$ iterates to $\cN_{\delta^*}^+$. So the models $\cN_{\delta^*}^+$ are cofinal in the direct limit system giving rise to $\cM^0_\infty$.
\end{proof}
\fi 

Recall that $\cP_\infty^0$ is a $\Sigma_0$-iterate of $\cP^0$ in $\cW$. Write $\Sigma_{\cP_\infty^{0}}$ for the corresponding tail strategy of $\Sigma_0$ and $\pi_\infty^0 \colon \cP^0 \rightarrow \cP_\infty^0$ for the iteration embedding. Moreover, recall that $\cM^{(0)}|\delta_1$ is obtained from a fully backgrounded construction in $\cW|\delta_1$. The proof of the successor case in the proof of \cite[Theorem 6.5]{Sa15} (cf. \cite[Lemma 2.15]{Sa17}) shows that $\Sigma_{\cP_\infty^{0}}$ up to $\delta_1$ is in $\cM^{(0)}$. More precisely:

\begin{lemma}\label{lem:TailSigma0InM}
  $\Sigma_{\cP_\infty^{0}} \upharpoonright (\cM^{(0)}|\delta_1)$ is amenable to $\cM^{(0)}|\delta_1$, i.e., for every $X \in \cM^{(0)}|\delta_1$, $\Sigma_{\cP_\infty^{0}} \upharpoonright X \in \cM^{(0)}|\delta_1$.
\end{lemma}

We will show in Lemma \ref{lem:TailSigmaiInM} that, if we suppose inductively that $(\mathsf{IH})_j$ holds, i.e., the $\cM$-construction in $\cW$ where Clause \eqref{item:strongext} is restricted to extenders with critical point in $\{\kappa_l \mid l \leq j\}$ is well-defined up to $\delta_{j+1}$ and, if $\cM^{(j)}|\delta_{j+1}$ is the result of this construction up to $\delta_{j+1}$, $\Sigma_{\cP_\infty^{0}} \upharpoonright (\cM^{(j)} | \delta_{j+1})$ is amenable to $\cM^{(j)} | \delta_{j+1}$.

\subsection{Direct limit systems at $\kappa_i$}\label{subsec:i>0}

We now define more general direct limit systems that will be used to characterize extenders with critical point $\kappa_i$ for $i>0$ and argue that their direct limits $\cP_\infty^i$ are in $\cM$. Recall that we fixed a translatable structure $\cW$. In this subsection we assume $(\mathsf{IH})_i$ and let $\cM^{(i)}$ denote the result of an $\cM$-construction where Clause \eqref{item:strongext} is restricted to extenders with critical point in $\{ \kappa_j \mid j \leq i \}$. Note that $\cM^{(i)}$ is not necessarily a proper class model at this point as the $\cM$-construction might break down for extenders with critical point $\kappa_i$ added by Clause \eqref{item:strongext} of Definition \ref{def:translationprocedure}. One of the applications of the results of this subsection is that this does not happen and $\cM^{(i)}$ is indeed a proper class model.
Recall that $\kappa_i$ is the least ${<}\delta_{i+1}$-strong cardinal in $\cW$.\footnote{Again, by a short argument in this setting $\kappa_i$ will also be the first cardinal above $\delta_i$ that is strong up to $\delta_{i+1}$ in $\cM^{(i)}$ since by an argument of Farmer Schlutzenberg the fully backgrounded construction will add extenders witnessing this.} To avoid confusion, we will emphasize where the inductive hypothesis is used. By our inductive hypothesis, $\cM^{(j)}$ is a well-defined proper class model for all $j<i$. Note that by construction $\cM^{(i)} |\delta_{i+1} = \cM^{(i-1)}|\delta_{i+1}$, so $\cM^{(i)}|\delta_{i+1}$ is well-defined.

Working in $\cW$, let $\cP_\infty^i$ be the direct limit of all $\Sigma_i$-iterates of $\cP^i = Lp_\omega^{\Sigma_{i-1},\infty}(\cW|\delta_i)$ that are countable in $\cW[g]$, where $g$ is $\Col(\omega, {<}\kappa_i)$-generic over $\cW$. % If it is clear which $\kappa_i$ we consider, we sometimes write $\cM_\infty$ to simplify the notation.
By standard arguments this direct limit is well-defined and well-founded (see \cite{Sa15}) and we can let $\delta_\infty^i$ be the image of $\delta_i$ in $\cP_\infty^i$ under the direct limit embedding. Moreover, we write \[ \cP_\infty^{i,-} = \cP_\infty^i | \delta_\infty^i. \]

As in the case $i=0$, we aim to identify $\cP^{i}_\infty$ inside $\cM^{(i)}|\delta_{i+1}$ as the direct limit of premice where densely many of these premice result from certain backgrounded constructions, in this case relative to strategies.

Let $i > 0$. Working in $\cM^{(i)}$, we say an ordinal $\delta^* < \kappa_i$ is a \emph{$\kappa_i$-weak cutpoint} iff all extenders $E$ on the sequence of $\cM^{(i)}$ overlapping $\delta^*$, i.e., with $\crit(E) < \delta^* < \lh(E)$, have critical point $\kappa_j$ for some $j<i$. %Note that, similarly as in the case $i=0$, $\kappa_i$ is the least ${<}\delta_{i+1}$-strong cardinal in $\cM$ above $\delta_i$.

For any $\kappa_i$-weak cutpoint $\delta^*$ in $\cM^{(i)}|\kappa_i$ we define $\cN_{\delta^*}$ as the result of a hod pair construction\footnote{Similar as in \cite[Definition 2.6]{Sa15} but we are working in a fine structural model, not in a weak background triple.} above $\delta^*$ in $\cM^{(i)}|\delta_{i+1}$ relative to certain tail strategies of $\Sigma_k$ for $k<i$ until it reaches $i+1$ full Woodin cardinals. We make this precise in what follows. This part of the argument works in $\cM^{(i)}|\delta_{i+1}$. As already noted above, this segment of $\cM^{(i)}$ exists by $(\mathsf{IH}.2)_i$ as extender with critical point $\kappa_l$ for $l \geq i$ that are not fully backgrounded (i.e., that are added by Clause \eqref{item:strongext} of Definition \ref{def:translationprocedure}) are only added past stage $\delta_{i+1}$ of the construction.
Note that by Lemmas \ref{lem:TailSigma0InM} and \ref{lem:TailSigmaiInM} we may and will assume inductively that the relevant tail of $\Sigma_{k-1}$ restricted to $\cM^{(i)}|\delta_{i+1}$ is amenable to $\cM^{(i)}|\delta_{i+1}$ for all $k \leq i$.

To keep the notation simple, we illustrate the definition of the models $\cN_{\delta^*}$ at $\kappa_1$ and $\kappa_2$. The definition for $\kappa_i$, $i>2$, is then a straightforward generalization. In what follows, whenever we say that we ``add a strategy'' we mean that the strategy gets added to the model in a fine structural sense as in \cite{Sa15}. We decided to not go into the fine structural details here in order to be able to focus on the new ideas in this construction. We refer the interested reader to \cite[Chapter 1]{Sa15}.

So for the case $i=1$, let $\delta^* > \delta_1$ be a $\kappa_1$-weak cutpoint. Then $\cN_{\delta^*}$ is given by the following construction: Until it reaches the first Woodin cardinal a hod pair construction is just a fully backgrounded construction. So we start with performing a fully backgrounded construction as in \cite{MS94} (without the smallness assumption) using extenders with critical point above $\delta^*$ inside $\cM^{(1)}|\delta_2$. Let $(\cL_\eta \mid \eta \leq \eta^*)$ be the result of the construction and suppose it reaches an ordinal $\gamma$ such that \[ (Lp_\omega^{\infty}(\cL_\gamma))^\cW \models \text{``$\cL_\gamma \cap \Ord$ is Woodin.''} \] Then we let $\cR_0 = \cL_\gamma$ and write \[ \cR_0^+ = (Lp_\omega^{\infty}(\cL_\gamma))^\cW. \] If it exists, we say that $\cL_\gamma$ is the level where the construction reaches the first full Woodin cardinal. We stop the construction and say that it does not succeed if no such level is reached with $\cL_\gamma \cap \Ord < \kappa_1$.  We will show in Lemma \ref{lem:Ndeltasucceeds} that such a level $\cL_\gamma$ is indeed reached below $\kappa_1$ and hence all extenders in $\cR_0$ are backgrounded not only in $\cM^{(1)}$ but also in $\cW$.

 We will argue in Lemma \ref{lem:MinftyIteratesToConstructioni=1} that $\cR_0$ is a $\Sigma_{\cP_\infty^0}$-iterate of $\cP_\infty^0$ (that itself is a $\Sigma_0$-iterate of $\cP^0$). Write $\Sigma_{\cR_0}$ for the respective tail strategy of $\Sigma_0$. Then $(\cR_0, \Sigma_{\cR_0})$ is a hod pair. As $\Sigma_0$ has branch condensation, the fully backgrounded construction will absorb the tail strategy $\Sigma_{\cR_0}$ by \cite[Lemma 2.15]{Sa15}. More precisely, $\Sigma_{\cR_0}$ agrees with the iteration strategy $\cR_0$ inherits from $\cM^{(1)}$ (and ultimately from $\cW$) as the result of a backgrounded construction. 
 Therefore, it makes sense to define $\cR_1$ as the result of adding $\Sigma_{\cR_0}$ to $\cR_0$. That means, we proceed further in the fully backgrounded construction, now relative to $\Sigma_{\cR_0}$, and the construction will not project across $\cR_0 \cap \Ord$. Write $(\cL_\eta^1 \mid \eta \leq \eta^*_1)$ for the result of this construction.
 If it exists, let $\gamma_1$ be the least ordinal such that 
  \[ (Lp_\omega^{\Sigma_{\cR_0},\infty}(\cL^1_{\gamma_1}))^\cW \models \text{``$\cL^1_{\gamma_1} \cap \Ord$ is Woodin''.} \] Write $\cN_{\delta^*} = \cR_1 = \cL^1_{\gamma_1}$ and let \[ \cN_{\delta^*}^+ = (Lp_\omega^{\Sigma_{\cR_0},\infty}(\cR_1))^\cW. \] If it exists, we say that $\cL^1_{\gamma_1}$ is the level where the construction reaches the second full Woodin cardinal. Again, if no such level is reached below $\kappa_1$ we say that the construction does not succeed.
 We will argue in Lemma \ref{lem:Ndeltasucceeds} that the construction succeeds and in Lemma \ref{lem:LpatkappaiinM} that $\cN_{\delta^*}^+$ is definable in $\cM^{(1)}|\delta_2$.

\begin{lemma}\label{lem:Ndeltasucceeds}
    For each $i < \omega$ and each $\kappa_i$-weak cutpoint $\delta^*$ with $\delta_i < \delta^* < \kappa_i$ in $\cM^{(1)}$, the construction of $\cN_{\delta^*}^+$ does not fail, i.e., it reaches a level with $i+1$ full Woodin cardinals below $\kappa_i$. 
\end{lemma}
\begin{proof}
    As discussed in the previous subsection, for $i=0$ this follows directly from \cite[Lemma 2.14]{Sa17}. We give the argument for $i=1$ here, the general case $i>1$ is similar. 
    
    Fix a $\kappa_1$-weak cutpoint $\delta^*$ with $\delta_1 < \delta^* < \kappa_1$ in $\cM^{(1)}$ and consider the construction of $\cN_{\delta^*}^+$. The argument that this construction reaches the first full Woodin is an easier version of the argument to follow for the second full Woodin.
    So suppose that the construction reaches the first full Woodin at some level $\cR_0$. Then we proceed further in the fully backgrounded construction in $\cM^{(1)}|\delta_2$, now relative to $\Sigma_{\cR_0}$, and write $(\cL_\eta^1 \mid \eta \leq \eta^*_1)$ for the result of this construction. We first show the following claim.

    \begin{claim}\label{cl:delta2limitofLpWoodins}
        $\delta_2$ is a limit of ordinals $\xi$ such that for some $\eta \leq \eta^*_1,$ in $\cW$, \[ Lp^{\Sigma_{\cR_0}, \infty}(\cL_\eta^1) \vDash \text{``$\xi = \cL_\eta^1 \cap \Ord$ is Woodin.''} \]
    \end{claim}
    \begin{proof}
        Suppose this is not the case and let $\xi^*$ be the supremum of the ordinals $\xi$ in the statement of the claim. If there is no such $\xi$, we let $\xi^* = \delta_1$. %Say $\xi^* = \cL_{\eta^*}^1 \cap \Ord$. 
        Let $\cQ$ be the result of a fully backgrounded construction in $\cW|\delta_2$ relative to $\Sigma$
        using only extenders with critical point above $\xi^*$. 

        Work in $\cW$ and compare $\cP^1 = Lp_\omega^{\Sigma_0, \infty}(\cW|\delta_1) = Lp_\omega^{\Sigma, \infty}(\cW|\delta_1)$ via the strategy $\Sigma$ against the construction of $\cQ$. Stationarity and universality of fully backgrounded constructions (see \cite[Lemmas 2.11 and 2.13]{Sa15}) imply that $\cP^1$ iterates to some $\cP^*$ with $\cP^* \unlhd \cQ$. Moreover, by universality, the iteration from $\cP^1$ to $\cP^*$ is non-dropping and, by choice of the strategy $\Sigma$, fullness preserving. Write $i \colon \cP^1 \rightarrow \cP^*$ for the iteration embedding and $\Sigma_{\cP^*}$ for the corresponding tail strategy of $\cP^*$. Let $\delta^* = i(\delta_1)$. Then, by fullness preservation, $\cP^* | ((\delta^*)^+)^{\cP^*} = Lp^{\Sigma_{\cP^*}, \infty}(\cP^*|\delta^*)$. As $\Sigma$ has branch condensation, it is by \cite[Lemma 2.15]{Sa15} absorbed by the fully backgrounded construction resulting in $\cQ$. Therefore, in $\cW$, 
        \[ Lp_\omega^{\Sigma, \infty}(\cQ|\delta^*) \vDash \text{``$\delta^*$ is Woodin.''} \]
        In fact, the argument we just gave shows that there is an unbounded set of ordinals $\delta^*$ such that for some $\cQ$, $Lp_\omega^{\Sigma, \infty}(\cQ|\delta^*) \vDash \text{``$\delta^*$ is Woodin.''}$ Pick an $\eta^*$ with $\cL^1_{\eta^*} \cap \Ord = \delta^*$ such that  $Lp_\omega^{\Sigma, \infty}(\cQ|\delta^*) \vDash \text{``$\delta^*$ is Woodin.''}$

            \begin{subclaim*}
                 In $\cW$, $Lp_\omega^{\Sigma, \infty}(\cL^1_{\eta^*}|\delta^*) \vDash \text{``$\delta^*$ is Woodin.''}$
            \end{subclaim*}
            \begin{proof}
                Suppose toward a contradiction that the subclaim does not hold. This implies that $\delta^*$ is not Woodin in $\cW$.
                %As $(\cL_\eta^1 \mid \eta \leq \eta^*_1)$ is constructed in $\cM$ and $(Lp_\omega^{\Sigma_0, \infty}(\cL^1_{\eta^*}|\delta^*))^\cW \in \cM$, this implies that $\delta^*$ is not Woodin in $\cM$.
                %Recall that $(\cL_\eta^1 \mid \eta \leq \eta^*_1)$ is the result of a local $K^c$-construction in $\cM|\delta_2$, relative to $\Sigma_{\cR_0}$ and above $\cR_0$. 
                If $\delta^*$ is a strong cutpoint cardinal in $\cW$, %and $L_\omega(\cQ|\delta^*) \vDash \text{``$\delta^*$ is Woodin.''}$, 
                we can consider the $\cP$-construction of $\cW|((\delta^*)^{+\omega})^\cW$ over $\cQ|\delta^*$ in the sense of \cite[Definition 3.41]{Sa15}\footnote{$\cP$-constructions are called $\cS$-constructions in \cite{Sa15} as they were introduced by Steel in \cite{SchSt09}.} By \cite[Lemma 3.43]{Sa15} this $\cP$-construction reaches a level where $\delta^*$ is not Woodin.
                In particular, $\delta^*$ is not Woodin in $Lp_\omega^{\Sigma, \infty}(\cQ|\delta^*)$, a contradiction.

                If $\delta^*$ is not a strong cutpoint cardinal in $\cW$, we can perform the $\cP$-construction inside $\Ult(\cW,F)$, where $F$ is the least extender overlapping $\delta^*$ in $\cW$ such that $\delta^*$ is a strong cutpoint in $\Ult(\cW,F)$. As before, this yields that $\delta^*$ is not Woodin in $(Lp_\omega^{\Sigma, \infty}(\cQ|\delta^*))^{\Ult(\cW,F)} = (Lp_\omega^{\Sigma, \infty}(\cQ|\delta^*))^\cW$.
            \end{proof}
             As $\xi^* < \delta^*$ and $Lp_\omega^{\Sigma, \infty}(\cL^1_{\eta^*}|\delta^*) = Lp_\omega^{\Sigma_{\cR_0}, \infty}(\cL^1_{\eta^*}|\delta^*)$, the subclaim immediately yields a contradiction.        
    \end{proof}
    
    We are left with showing that the previous claim implies Lemma \ref{lem:Ndeltasucceeds}. For this, we will perform a reflection argument using that $\kappa_1$ is ${<}\delta_1$-strong in $\cW$.
    %As we are using local $K^c$-constructions in the construction of $(\cL_\eta^1 \mid \eta \leq \eta^*_1)$, $\kappa_1$ is the least ${<}\delta_2$-strong cardinal in $\cL^1_{\eta^*_1}$. 
    % Recall that $\kappa_1$ was also the least ${<}\delta_2$-strong cardinal in $\cW$ (not only in $\cM$).

    \begin{claim}
        $\kappa_1$ is a limit of Woodin cardinals in $\cL^1_{\eta^*_1}$.
    \end{claim}
    \begin{proof}
        Let $\zeta < \kappa_1$ be arbitrary. By Claim \ref{cl:delta2limitofLpWoodins} there is some $\xi < \delta_2$ and some $\eta$ such that 
        $(Lp^{\Sigma_{\cR_0}, \infty}(\cL_\eta^1))^{\cW} \vDash \text{``$\xi = \cL_\eta^1 \cap \Ord$ is Woodin.''}$
        Let $E$ be an extender on the sequence of $\cW$ witnessing that $\kappa_1$ is ${<}\delta_2$-strong such that $\lh(E) = \str(E)$ is much larger than $\xi$. In particular, we pick $E$ sufficiently strong that, if  $(\cL_\eta^1)^{\Ult(\cW,E)}$ denotes the result of the $\cN_{\delta^*}^+$-construction in the result of an $\cM$-construction in $\Ult(\cW,E)$ instead of $\cW$, $(Lp^{\Sigma_{\cR_0}, \infty}(\cL_\eta^1))^{\Ult(\cW,E)} \vDash \text{``$\xi = \cL_\eta^1 \cap \Ord$ is Woodin.''}$
        Then, letting $i_E$ denote the $E$-ultrapower embedding, 
        \begin{eqnarray*}
            \Ult(\cW, E) \models \text{``} \exists \xi \text{ with } \zeta < \xi < i_E(\kappa_1) \text{ such that } \\ 
            Lp^{\Sigma_{\cR_0}, \infty}(\cL_\eta^1) \vDash \text{``$\xi = \cL_\eta^1 \cap \Ord$ is Woodin.''}
        \end{eqnarray*}
        By elementarity, 
        \begin{eqnarray*}
            \cW \models \text{``} \exists \xi \text{ with } \zeta < \xi < \kappa_1 \text{ such that } \;\;\;\;\;\; \\ 
            Lp^{\Sigma_{\cR_0}, \infty}(\cL_\eta^1) \vDash \text{``$\xi = \cL_\eta^1 \cap \Ord$ is Woodin.''}
        \end{eqnarray*}
        In particular, $\xi$ as in the above equation is a Woodin cardinal in $\cL^1_{\eta^*_1}$, as desired.
    \end{proof}
    This claim immediately implies Lemma \ref{lem:Ndeltasucceeds}.
\end{proof}

The next lemma will be applied in the inductive step from $i=0$ to $i=1$. So we will prove it under the corresponding inductive hypothesis.

\begin{lemma}\label{lem:MinftyIteratesToConstructioni=1onecase}
   Suppose $(\mathsf{IH})_1$. Then $\cR_0$ is a $\Sigma_{\cP_\infty^{0}}$-iterate of $\cP_\infty^{0}$ in $\cM^{(1)}|\delta_2 = \cM^{(0)}|\delta_2$ and if $\Sigma_{\cR_{0}}$ denotes the corresponding tail strategy, $\Sigma_{\cR_0} \upharpoonright \cM^{(1)}|\delta_2$ is amenable to $\cM^{(1)}|\delta_2$, i.e., for any $X \in \cM^{(1)}|\delta_2$, $\Sigma_{\cR_0}\upharpoonright X \in \cM^{(1)}|\delta_2$.
\end{lemma}
\begin{proof}
  Recall that $\cR_0$ is constructed in $\cM^{(1)} | \delta_2$. As we inductively assume in $(\mathsf{IH})_1$ that $\cM^{(0)}$ is a well-defined proper class model and $\kappa_0$ is fully strong in $\cM^{(0)}$, we have that $\Sigma_{\cP_\infty^0} \upharpoonright \cM^{(0)} |\delta_2$ is amenable to $\cM^{(0)}|\delta_2$ by Lemma \ref{lem:TailSigmaiInM} below (see also the comment after Lemma \ref{lem:TailSigma0InM}). Moreover, $\cM^{(0)}|\delta_2 = \cM^{(1)}|\delta_2$ since Clause \eqref{item:strongext} in Definition \ref{def:translationprocedure} only adds extenders with critical point $\kappa_1$ above stage $\delta_2$ of the construction.
  Working in $\cM^{(1)}|\delta_2$, compare $\cP_\infty^0$ (via $\Sigma_{\cP_\infty^0}$) against the construction $(\cL_\eta \mid \eta \leq \eta^*)$ used to define $\cR_0 = \cL_\gamma$. By stationarity and universality of fully backgrounded constructions (cf. \cite[Lemmas 2.11 and 2.13]{Sa15}), $\cP_\infty^0$ iterates to a level of the construction of $\cR_0$. As $\cR_0$ is the level where the construction reaches the first full Woodin cardinal, $\cP_\infty^0$ in fact iterates to $\cR_0$, as desired. This also immediately gives that $\Sigma_{\cR_0} \upharpoonright \cM^{(1)}|\delta_2$ is amenable to $\cM^{(1)}|\delta_2$.
\end{proof}

In fact, the proof straightforwardly generalizes to show the following lemma.

\begin{lemma}\label{lem:MinftyIteratesToConstructioni=1}
   Let $j<\omega$ and suppose $(\mathsf{IH})_{j}$, i.e., $\cM^{(j)}|\delta_{j+1}$ is well-defined. Then $\cR_0$ as constructed in $\cM^{(j)}|\delta_{j+1}$ above some $\delta^* > \delta_j$ is a $\Sigma_{\cP_\infty^{0}}$-iterate of $\cP_\infty^{0}$ in $\cM^{(j)}|\delta_{j+1}$ and if $\Sigma_{\cR_{0}}$ denotes the corresponding tail strategy, $\Sigma_{\cR_0} \upharpoonright \cM^{(j)}|\delta_{j+1}$ is amenable to $\cM^{(j)}|\delta_{j+1}$, i.e., for any $X \in \cM^{(j)}|\delta_{j+1}$, $\Sigma_{\cR_0}\upharpoonright X \in \cM^{(j)}|\delta_{j+1}$.
\end{lemma}

\begin{lemma}\label{lem:LpatkappaiinM}
    Suppose $(\mathsf{IH})_1$. Then the function \[ X \mapsto (Lp^{\Sigma_{\cR_0},\infty}(X))^\cW \] for $X \in \cM^{(1)}|\kappa_1$ with $\cR_0 \in X$ is definable over $\cM^{(1)}|\delta_2$.
\end{lemma}
\begin{proof}
    The proof is analogous to the proof of Lemma \ref{lem:Lpatkappa0inM}, replacing fully backgrounded constructions by fully backgrounded constructions relative to $\Sigma_{\cR_0}$ and using the comparison techniques from \cite{Sa15}. $\Sigma_{\cR_0} \upharpoonright \cM^{(1)}|\delta_2$ is amenable to $\cM^{(1)}|\delta_2$ by Lemma \ref{lem:MinftyIteratesToConstructioni=1onecase}.
\end{proof}

%Why is $\Sigma_{\cM_\infty^i}$ in $\cM$? $\cM_\infty^i$ is in $\cM$ as a quasi-limit of elements of $\cM$. But why can $\cM$ see the strategy? Does it follow from the good universal mice thing? It should give that the tail strategy of $\cR_\cQ$ is in $\cQ$. We are at $\kappa_1$ and want to define the strategy of $\cM_\infty^0$.

%\expl{What do we mean by iterable here? $(\Ord,\Ord)$-iterable? Can we simply say that whenever we say iterable we mean $(\Ord,\Ord)$-iterable, i.e., fully iterable, in the whole paper? Here we mean iterable via a tail of $\Sigma$.}

 For the next level $i=2$, suppose $(\mathsf{IH})_2$, work in $\cM^{(2)}|\delta_3$, and let $\delta^* \leq \kappa_2$ be a $\kappa_2$-weak cutpoint. Note that by $(\mathsf{IH})_2$, $\cM^{(2)}|\delta_3$ is well-defined and $\cM^{(0)}$ and $\cM^{(1)}$ are well-defined proper class models. Moreover, $\cM^{(2)}|\delta_3 = \cM^{(1)}|\delta_3$.
We obtain $\cN_{\delta^*}^+$ as follows: Start by performing a fully backgrounded construction above $\delta^*$ inside $\cM^{(2)}|\delta_3$ until the level $\cR_0$ where it reaches the first full Woodin cardinal. 
By Lemma \ref{lem:MinftyIteratesToConstructioni=1} we get that $\cR_0$ is a $\Sigma_{\cP_\infty^{0}}$-iterate of $\cP_\infty^{0}$ and $\Sigma_{\cR_0} \upharpoonright \cM^{(2)}|\delta_3$ is amenable to $\cM^{(2)}|\delta_3$. Therefore, we can add $\Sigma_{\cR_0}$ to $\cR_0$ without projecting across $\cR_0 \cap \Ord$. More precisely, if it exists, we let $\cR_1$ be the result of a fully backgrounded construction relative to $\Sigma_{\cR_0}$ on top of $\cR_0$ in $\cM^{(2)}|\delta_3$ up to a level $\cR_1$ where the construction reaches the second full Woodin cardinal. That means the construction will not project across $\cR_0 \cap \Ord$ and \[ (Lp_\omega^{\Sigma_{\cR_0},\infty}(\cR_1))^\cW \models \text{``$\cR_1 \cap \Ord$ is Woodin''.} \]
 All extenders in $\cM^{(2)}|\delta_3$ that serve as backgrounds for extenders in $\cR_0$ or $\cR_1$ are themselves backgrounded by extenders in $\cW$. This follows from the fact that the fully backgrounded constructions giving rise to $\cR_0$ and $\cR_1$ are performed in $\cM^{(2)}|\delta_3$, only use extenders above $\delta^* > \delta_2$, and stop before $\kappa_2$ by  Lemma \ref{lem:Ndeltasucceeds}.
 
 We will show in Lemma \ref{lem:MinftyIteratesToConstructioni>1} that $\cR_1$ is a $\Sigma_{\cP_\infty^{1}}$-iterate of $\cP_\infty^1$ and, if $\Sigma_{\cR_1}$ denotes the corresponding tail strategy of $\Sigma_{\cP_\infty^{1}}$ (and ultimately of $\Sigma_1$),  $\Sigma_{\cR_1} \upharpoonright \cM^{(2)}|\delta_3$ is amenable to $\cM^{(2)}|\delta_3$ (as we are inductively supposing $(\mathsf{IH})_2$). Moreover, as strategies with branch condensation are absorbed (see \cite[Lemma 2.15]{Sa15}), $\Sigma_{\cR_1} \upharpoonright \cM^{(2)}|\delta_3$ agrees with the strategy $\cR_1$ inherits from $\cM^{(2)}$ (and ultimately from $\cW$) as the result of a fully backgrounded construction. So we can let $\cR_2$ be the result of adding $\Sigma_{\cR_1}$ to $\cR_1$. That means, we again construct further relative to $\Sigma_{\cR_1}$ and, if it exists, obtain $\cR_2$ as a $\Sigma_{\cR_1}$-premouse above $\cR_1$ such that the construction does not project across $\cR_1 \cap \Ord$ and 
 \[ (Lp_\omega^{\Sigma_{\cR_1},\infty}(\cR_2))^\cW \models \text{``$\cR_2 \cap \Ord$ is Woodin''.} \]
Finally, we let $\cN_{\delta^*} = \cR_2$ and \[ \cN_{\delta^*}^+ = (Lp_\omega^{\Sigma_{\cR_1}, \infty}(\cR_2))^\cW. \]

As in Lemma \ref{lem:Ndeltasucceeds} the construction succeeds. An argument as in Lemma \ref{lem:LpatkappaiinM} shows that $\cN_{\delta^*}^+$ is definable in $\cM$. The next lemma provides a version of Lemma \ref{lem:MinftyIteratesToConstructioni=1} for $\cR_1$. The same argument inductively shows the result for constructions above $\kappa_i$-weak cutpoints $\delta^*$ for any $i>1$.

\begin{lemma}\label{lem:MinftyIteratesToConstructioni>1}
 Let $j<\omega$ and suppose $(\mathsf{IH})_{j}$, i.e., $\cM^{(j)}|\delta_{j+1}$ is well-defined. Then $\cR_1$ as constructed in $\cM^{(j)}|\delta_{j+1}$ above some $\delta^* > \delta_j$ is a $\Sigma_{\cP_\infty^{1}}$-iterate of $\cP_\infty^{1}$ in $\cM^{(j)}|\delta_{j+1}$ and if $\Sigma_{\cR_{1}}$ denotes the corresponding tail strategy, $\Sigma_{\cR_1} \upharpoonright \cM^{(j)}|\delta_{j+1}$ is amenable to $\cM^{(j)}|\delta_{j+1}$, i.e., for any $X \in \cM^{(j)}|\delta_{j+1}$, $\Sigma_{\cR_1}\upharpoonright X \in \cM^{(j)}|\delta_{j+1}$.
\end{lemma}
\begin{proof}
%For notational simplicity we prove the case $j=2$, the general case is a straightforward generalization. 
  Suppose inductively that $\cP_\infty^{1} \in \cM^{(j)}|\delta_{j+1}$ and using Lemmas \ref{lem:TailSigma0InM} and \ref{lem:TailSigmaiInM}, $\Sigma_{\cP_\infty^{1}} \upharpoonright \cM^{(j)}|\delta_{j+1}$ is amenable to $\cM^{(j)}|\delta_{j+1}$.
  Working in $\cM^{(j)}|\delta_{j+1}$, compare $\cP_\infty^{1}$ (via $\Sigma_{\cP_\infty^{1}}$) against the construction of $\cR_{1}$ (as the result of a fully backgrounded construction in $\cM^{(j)}|\delta_{j+1}$) using Sargsyans's comparison method  in \cite{Sa15}. % Steel's comparison method has the issue that is need pfs-fine structure, so we avoid it here
  $\cR_{1}$ is the result of a fully backgrounded construction using extenders with critical point above $\delta^* > \delta_j > \cP_\infty^{1} \cap \Ord$, so (by stationarity and universality of fully backgrounded constructions) $\cR_1$ does not move in the comparison. Hence, $\cP_\infty^{1}$ iterates to an initial segment of $\cR_{1}$ and their strategies line up. By fullness, $\cP_\infty^{1}$ in fact iterates to $\cR_1$. It follows immediately that not only $\Sigma_{\cP_\infty^{1}} \upharpoonright \cM^{(j)}|\delta_{j+1}$ is amenable to $\cM^{(j)}|\delta_{j+1}$ but also its tail strategy $\Sigma_{\cR_1} \upharpoonright \cM^{(j)}|\delta_{j+1}$ is amenable to $\cM^{(j)}|\delta_{j+1}$. 
%  As fully backgrounded constructions absorb strategies with branch condensation by \cite[Lemma 2.15]{Sa15}, the strategies of $\cR_{1}$ (inherited from $\cM$ and ultimately from $\cW$ by the backgrounded construction) and the tail strategy $\Sigma_{\cR_1}$ of $\Sigma_{\cM_\infty^{1}}$ agree.
% \expl{$\cN$ should be iterable in $\cM$ anyway as it is obtained by a fully backgrounded construction (and we even use that for the argument). The point of this is that $\cN$ is iterable in $\cM$ via a tail of $\Sigma$ as the strategy $\Sigma$ has nice properties we need to have in order to make sense of strategy mice.}
\end{proof}

Now returning to the general context for an arbitrary $i>0$, suppose $(\mathsf{IH})_i$. We have as in the case $i = 0$ that each $\cN_{\delta^*}^+$ for a $\kappa_i$-weak cutpoint $\delta^*$ is $\kappa_i$-suitable and we can prove analogues of Lemmas \ref{lem:Ndelta*suitable} and \ref{lem:iterationNdelta*Ndelta**}. As in the case $i=0$, we let $\cN^* \in \cM^{(i)}|\delta_{i+1}$ be the result of making $\cM^{(i)}|\kappa_i$ generically generic over an iterate of $\cN_{\delta_0^*}^+$, where $\delta_0^*$ is the least $\kappa_i$-weak cutpoint in $\cM^{(i)}|\kappa_i$ above $\delta_i$.
Recall that as $\cW$ is a translatable structure, there is a set of term relations $\vec\tau^{(i)} \subseteq \cN^*$ such that    
    whenever $G \subseteq \Col(\omega, |\cN^*|)$ is generic over $\cW$ and $(\tau_k^{(i)} \mid k < \omega)$ is a fixed generic enumeration of $\vec\tau^{(i)}$, then $(\Sigma_{\cN^*})^G$ is strongly guided by $(\tau_k^{(i)} \mid k < \omega)$. For $l \leq \omega$ let $f_{l}^{(i)} \in F(\kappa_i, \Sigma)$ be the function given by \[ f_{l}^{(i)}(\cQ) = \bigoplus_{k<l} (\tau_k^{(i)})^{\cQ} \] for $\cQ \in S(\kappa_i, \Sigma)$ and $(\tau_k^{(i)})^{\cQ}$ the restriction of $\tau_k^{(i)}$ to $\cQ$ as defined in the case $i=0$ at the end of Subsection \ref{subsec:i=0}. Now let
    \begin{align*}
    \cI^{(i)} = \{ (\cQ,l) \mid \cQ \text{ is $\kappa_i$-suitable, strongly $f^{(i)}_{l}$-iterable, and countable in $(\cM^{(i)})^{\Col(\omega,{<}\kappa_i)}$} \}
    \end{align*}
    and for $(\cQ,l), (\cQ',l') \in \cI^{(i)}$ let 
    \[ (\cQ,l) \leq_{\cI^{(i)}} (\cQ',l') \text{ iff } \cQ' \text{ is a $(\kappa_i, \Sigma)$-correct iterate of } \cQ \text{ and } l \leq l'.  \]
    Let $\cF^{(i)}$ be the system of models $H^{\cQ}_{f_l^{(i)}}$ indexed by $(\cQ,l) \in (\cI^{(i)}, \leq_{\cI^{(i)}})$ together with the associated embeddings. Note that $\cF^{(i)}$ is direct and internal to $\cM^{(i)}$. Let $(\cP_\infty^i)^*$ be its direct limit with direct limit embedding $\pi_{(\cQ,l),\infty} \colon H^{\cQ}_{f_l^{(i)}} \rightarrow (\cP_\infty^i)^*$.
As $\kappa_i$ is the least ${<}\delta_{i+1}$-strong cardinal above $\delta_i$ in $\cM^{(i)}$, it is not hard to see that there are cofinally many $\kappa_i$-weak cutpoints below $\kappa_i$ and the $\cN_{\delta^*}^+$'s appear cofinally in $\cF^{(i)}$.
% For the argument see handwritten notes in the green Clairefontaine book from 2023/08/30
The following lemma can be shown analogous to Lemma \ref{lem:quasidirectlimitkappa0}. The second equality in Lemma \ref{lem:quasidirectlimitkappai} is a consequence of fullness preservation.

\begin{lemma}\label{lem:quasidirectlimitkappai}
  $(\cP_\infty^i)^* = \cP_\infty^{i} = (Lp_\omega^{\Sigma_{(\cP_\infty^i)^-}, \infty})^{\cW}(\cP_\infty^{i,-})$.
\end{lemma}

This implies that we can define $\cP_\infty^{i}$ inside $\cM^{(i)}$. As in the case $i=0$, $\cP_\infty^{i}$ is an iterate of $\cP^i$ and we write $\pi_\infty^i \colon \cP^i \rightarrow \cP_\infty^i$ for the iteration embedding in $\cW$. We are left with showing that the corresponding tail iteration strategy $\Sigma_{\cP_\infty^{i}}$ can be internalized to $\cM^{(i)}$ as well. 
As in the previous section for $i=0$, ideas from the proof of the successor case in the proof of \cite[Theorem 6.5]{Sa15} (cf. \cite[Lemma 2.15]{Sa17}) can be used to argue that $\Sigma_{\cP_\infty^{i}}$ is in $\cM^{(i)}$. However some new ideas are necessary to deal with the non-fully backgrounded extenders in $\cM^{(i)}$. We outline the modifications that yield the internalization of $\Sigma_{\cP_\infty^{i}}$ to $\cM^{(i)}$ also in the case $i>0$.

\begin{lemma}\label{lem:TailSigmaiInM}
  Let $k\geq 0$ and suppose $(\mathsf{IH})_k$. Then, for any $i \leq k$, $\Sigma_{\cP_\infty^{i}} \upharpoonright \cM^{(k)}|\delta_{k+1}$ is amenable to $\cM^{(k)}|\delta_{k+1}$.
\end{lemma}

\begin{proof}
As we are no longer working in the result of a fully backgrounded construction to obtain $\cP_\infty^i$ but in the result of an $\cM$-construction, we sketch how to modify the proof of the successor case in the proof of \cite[Theorem 6.5]{Sa15} to obtain Lemma \ref{lem:TailSigmaiInM}. 
We assume that the reader is familiar with the proof of \cite[Theorem 6.5]{Sa15} and, for notational simplicity, restrict to the case for normal iteration trees. Sargsyan's argument uses universality in the sense of \cite[Lemma 2.13]{Sa15} or \cite[Lemma 11.1]{St08dm} to, for example, show that $\cQ$-structures for iteration trees according to the iteration strategy in question can be obtained via fully backgrounded constructions relative to segments of that strategy. This sort of universality goes back to Mitchell and Schindler's proof of universality for $K^c$ in \cite{MSch04}. To apply Sargsyan's argument in our setting we, for example, need the following claim. Before we can state the claim, recall the following notation from \cite{Sa15} (see also Subsection \ref{subsec:hodpm}): For a hod premouse $\cP$ with finitely many Woodin cardinals $\delta_0, \dots, \delta_n$, write, if $n>0$, $\cP^- = \cP(n-1)$. We will also use this notation for initial segments of hod premice and, for example, write $(\cP_\infty^{i,-})^- = (\cP_\infty^{i})^-$. Note that $(\cP_\infty^i)^-$ is strictly shorter than $\cP_\infty^{i,-} = \cP_\infty^i | \delta^{\cP_\infty^i}$. Recall that $(\mathsf{IH})_i$ implies that $\cM^{(i)}|\delta_{i+1}$ is well-defined.

\begin{claim}\label{cl:TailSigmaiInM_Qstructures}
  Let $\cT$ be an iteration tree on $\cP_\infty^{i}$ in $\cM^{(i)}|\delta_{i+1}$ of limit length and let $b = \Sigma_{\cP_\infty^{i}}(\cT)$. Suppose $\cT$ is not entirely based on $(\cP_\infty^{i})^-$ and $\cQ(b,\cT)$ exists. Let $\cS$ be the node in $\cT$ such that the fragment of $\cT$ from $\cP_\infty^{i}$-to-$\cS$ is based on $(\cP_\infty^{i})^-$ and the rest of $\cT$ is above $\cS^-$.
  Let $\cN$ be the result of a fully backgrounded construction relative to $(\Sigma_{(\cP_\infty^{i})^-})_{\cS^-}$ above $\cM(\cT)$ in $\cM^{(i)}|\delta_{i+1}$.
  Then 
  \begin{enumerate}
      %\item $\cQ(b,\cT)$ is countably iterable in $\cQ_i[G]$, i.e., countable substructures of $\cQ(b,\cT)$ are $\omega_1+1$-iterable in $\cQ_i[G]$,
      \item the comparison of $\cQ(b,\cT)$ and $\cN$ in $\cM^{(i)}|\delta_{i+1}$ is successful, and \label{it:1comparisonsuccessful}
      \item $\cN$ is universal when compared with $\cQ(b,\cT)$, i.e., it wins the comparison. \label{it:2universal}
  \end{enumerate}
\end{claim}
\begin{proof}
Note that \eqref{it:2universal} follows from \eqref{it:1comparisonsuccessful} by universality of fully backgrounded constructions as in \cite[Lemma 2.13]{Sa15} and \cite[Lemma 11.1]{St08dm}. %\prpl{Do we want to have a comparison against the construction of $\cN$ here? No.} 
In particular, it suffices for universality of $\cN$ against $\cQ(b,\cT)$ that the comparison of $\cQ(b,\cT)$ and $\cN$ is successful in $\cM^{(i)}|\delta_{i+1}$. So we are left with showing \eqref{it:1comparisonsuccessful}.

Suppose toward a contradiction that the comparison is not successful and let $\cU_0$ and $\cU_1$ be iteration trees on $\cQ(b,\cT)$ and $\cN$ witnessing this. Take a countable substructure of a sufficiently large initial segment of $\cM$ reflecting this situation and write $\bar\cQ(b,\cT), \bar\cN, \bar\cU_0$, and $\bar\cU_1$ for the images of $\cQ(b,\cT), \cN, \cU_0$, and $\cU_1$ under the collapse. By the iterability proof for fully backgrounded constructions, the relevant iterates of $\bar\cN$ can be realized into $\cM$. 
%Let $\bar\cU_1^{\bar\cM}$ be the iteration tree on $\bar\cM$ obtained by lifting $\bar\cU_1$ onto the background model. By the iterability proof for $\cM$-constructions, cf. Lemma \ref{lem:iterability}, the models in $\bar\cU_1$ can be realized into $\cM[G]$ and we can pick branches via $\Sigma$ by lifting (parts of the tree) onto $\cW[G^*]$. As $\Sigma$ can be extended to an iteration strategy $\Sigma^{G^*}$ for trees in $\cW[G^*]$, this describes an iteration strategy for $\bar\cN$. 
Moreover, the $\bar\cN$-side of the comparison provides $\cQ$-structures for the $\bar\cQ(b,\cT)$-side, cf., for example \cite[Fact 3.6]{Sa15}. Therefore, the comparison of $\bar\cQ(b,\cT)$ and $\bar\cN$ is successful in $\cM$.
\end{proof}
A similar claim can be shown for $\cT \in \cM^{(k)}|\delta_{k+1}$ for the case $i<k$ in the statement of Lemma \ref{lem:TailSigmaiInM}, using that by $(\mathsf{IH})_k$, $\cM^{(k)}|\delta_{k+1}$ is a well-defined model.
Using the notation in Claim \ref{cl:TailSigmaiInM_Qstructures}, the cases in the proof of \cite[Theorem 6.5]{Sa15} when the tree $\cT$ is entirely based on $(\cP_\infty^{i})^-$ and when there is a fatal drop in the rest of $\cT$ above $\cS^-$ are dealt with similarly as the case where $\cQ(b,\cT)$ exists. So we are left with showing that branches for maximal trees are also computed correctly (this corresponds to Clause $2(c)$ in the proof of \cite[Theorem 6.5]{Sa15}).

We define the iteration strategy $\Lambda$ as in the proof of \cite[Theorem 6.5]{Sa15}. This means that given $\cT \in \dom(\Lambda)$ such that $\cT$ is not covered by one of the cases discussed above, we say $\Lambda(\cT)$ is defined and let $\Lambda(\cT) = b$ iff there is an extender $E$ on the sequence of $\cM^{(k)}|\delta_{k+1}$ with critical point $\kappa_i$ such that $\cT \in \cM^{(k)} | \lh(E)$ and there is an embedding \[ \sigma \colon \cM_b^\cT \rightarrow \pi_E(\cP_\infty^{i}) \] such that $\pi_E \upharpoonright \cP_\infty^{i} = \sigma \circ \pi_b^\cT$. As in the proof of \cite[Theorem 6.5]{Sa15} we have that $\cT \in \dom(\Lambda)$ if and only if $\cT \in \dom(\Sigma_{\cP_\infty^{i}})$. We are left with showing the following claim.

\begin{claim}
    Let $\cT \in \dom(\Lambda) \cap \dom(\Sigma_{\cP_\infty^{i}})$ be as above. Then
    \begin{enumerate}
        \item $\Lambda(\cT)$ is defined for $\cT \in \cM^{(k)} | \delta_{k+1}$, and \label{item:Lambdadefined}
        \item if it is defined, $\Lambda(\cT) = \Sigma_{\cP_\infty^{i}}(\cT)$.\label{item:LambdaequaltoSigma} 
    \end{enumerate}
\end{claim}
\begin{proof}
    For \eqref{item:LambdaequaltoSigma}, let $\cT \in \dom(\Lambda) \cap \dom(\Sigma_{\cP_\infty^{i}})$ be such that $\Lambda(\cT)$ is defined. So let $\Lambda(\cT) = b$ and let $E$ be an extender on the sequence of $\cM^{(k)}|\delta_{k+1}$ with critical point $\kappa_i$ and let $\sigma \colon \cM_b^\cT \rightarrow \pi_E(\cP_\infty^{i})$ be an embedding such that $\pi_E \upharpoonright \cP_\infty^{i} = \sigma \circ \pi_b^\cT$. In case the extender $E$ is added to $\cM^{(k)}|\delta_{k+1}$ by the fully backgrounded part of the construction (which will be the case if $\lh(E) < \delta_{i+1}$) the argument in the proof of \cite[Theorem 6.5]{Sa15} applies. So assume that is not the case and $E$ is generically countably complete.
    
    Let $\hat\cW$ be a countable elementary substructure of a sufficiently large initial segment $\cW|\Omega$ of $\cW$. Let $\tau \colon \hat\cW \rightarrow \cW|\Omega$ and suppose that all relevant objects are in the range of $\tau$. For notational simplicity write $\cM^{(k)}|\delta_{k+1}$ for the result of the $\cM$-construction up to $\delta_{k+1}$ where Clause \eqref{item:strongext} is restricted to extenders with critical point in $\{ \kappa_j \mid j<k \}$ in $\cW|\Omega$ and let $\hat\cM$ be the transitive collapse of $\cM^{(k)}|\delta_{k+1}$, i.e., the result of the same type of $\cM$-construction in $\hat\cW$. Say $\tau(\hat\kappa_i, \hat E, \hat\cP_\infty^i, \hat\Sigma^{i,\infty}) = (\kappa_i, E, \cP_\infty^i, \Sigma_{\cP_\infty^{i}})$.
    
    %$\hat\cM_\infty^i$ is countable, suitable at $\hat\kappa_i$, and iterates to $\cM_\infty^i$. So 
    When comparing $\hat\cP_\infty^{i}$ via $\hat\Sigma^{i,\infty}$ and $\cP_\infty^{i}$ via $\Sigma_{\cP_\infty^{i}}$ inside $\cW$, $\hat\cP_\infty^{i}$ iterates to $\cP_\infty^{i}$. Moreover, \[ \tau \upharpoonright \hat\cP_\infty^{i} = \pi^{\hat\Sigma^{i,\infty}}_{\hat\cP_\infty^{i}, \tau(\hat\cP_\infty^{i})} \] as there are term relations that are strongly guiding $\hat{\Sigma}^{i,\infty}$. We have that $E = \tau(\hat E)$ is generically countably complete, so by Fact \ref{fact:realization} there is a realization map $r \colon \Ult(\hat \cM, \hat E) \rightarrow \cM$ such that $\tau \upharpoonright \hat\cM = r \circ \pi_{\hat E}$ where $\pi_{\hat E} \colon \hat \cM \rightarrow \Ult(\hat \cM, \hat E)$ denotes the ultrapower embedding. Therefore, $\pi_{\hat E}$ respects $\hat\Sigma^{i,\infty}$ and $\pi_{\hat E} \upharpoonright \hat\cP_\infty^{i} = \pi^{\hat\Sigma^{i,\infty}}_{\hat\cP_\infty^{i}, \pi_{\hat E}(\hat\cP_\infty^{i})}.$ By elementarity of $\tau$, \[ \pi_{E} \upharpoonright \cP_\infty^{i} = \pi^{\Sigma_{\cP_\infty^{i}}}_{\cP_\infty^{i}, \pi_{E}(\cP_\infty^{i})} \] and hence $\pi^{\Sigma_{\cP_\infty^i}}_{\cP_\infty^{i}, \pi_{E}(\cP_\infty^{i})} = \sigma \circ \pi_b^\cT$. As $\Sigma_{\cP_\infty^{i}}$ has branch condensation, $b = \Sigma_{\cP_\infty^{i}}(\cT)$, as desired.

    For \eqref{item:Lambdadefined}, given $\cT \in \dom(\Lambda) \cap \dom(\Sigma_{\cP_\infty^{i}})$ as above and $b = \Sigma_{\cP_\infty^{i}}(\cT)$ we aim to find $E$ and $\sigma$ such that $\pi_E \upharpoonright \cP_\infty^{i} = \sigma \circ \pi_b^\cT$. Let $r \leq k$ with $r \geq i+1$ be least such that $\cT \in \cM^{(r)} | \delta_{r}$.
    We start with arguing that $\cM_b^\cT$ is in fact an element of $\cM^{(r)}$. Let $\cN$ be the result of a fully backgrounded construction relative to $(\Sigma_{\cM_b^\cT})^-$ above $\cM(\cT)$ in $\cM^{(r)}|\delta_r$ using extenders with critical points above $\kappa_{r-1}$. Recall that inductively $(\Sigma_{\cM_b^\cT})^- \upharpoonright (\cM^{(r)}|\delta_r)$ is amenable to $\cM^{(r)}|\delta_r$. As the extenders backgrounding the construction of $\cN$ in $\cM^{(r)}|\delta_r$ have critical points above $\kappa_{r-1}$ they are in fact fully backgrounded in $\cW$. Therefore, when comparing $\cM_b^\cT$ against the construction of $\cN$ in $\cW$, by universality and stationarity of fully backgrounded constructions \cite[Lemmas 2.11 and 2.13]{Sa15}, $\cM_b^\cT$ iterates to a level of the construction of $\cN$. By fullness preservation of the iteration from $\cP^i$ via $\cP_\infty^i$ to $\cM_b^\cT$, the iteration from $\cM_b^\cT$ to the construction of $\cN$ would have to drop if it is non-trivial since $\cM_b^\cT$ and $\cN$ agree up to $\delta(\cT)$. Hence, $\cM_b^\cT \unlhd \cN$ and, in particular, $\cM_b^\cT \in \cM^{(r)}|\delta_r$.

    In case $i =k$, i.e., $\cT \in \cM^{(i)} | \delta_{i+1}$, recall that $\kappa_i$ is ${<}\delta_{i+1}$-strong in $\cM^{(i)}$ and pick $E$ with critical point $\kappa_i$ and $\cT \in \cM^{(i)}|\lh(E)$ on the sequence of $\cM^{(i)}$ added by the fully backgrounded part of the $\cM$-construction. In this case, $r = i+1$. For the case $k > i$, we suppose inductively that $(\mathsf{IH})_k$ holds, so $\kappa_i$ is strong in $\cM^{(k)}|\delta_{k+1}$. Therefore, we can fix an extender $E$ on the sequence of $\cM^{(k)}|\delta_{k+1}$ with $\lh(E) > \delta_{r}$. In both cases, we pick $E$ such that, in addition to the requirements above, $\cM^{(k)}|\delta_{k+1}$ and $\pi_E(\cM^{(k)}|\delta_{k+1})$ agree on the result $\cN$ of the hybrid fully backgrounded construction up to $\cM_b^\cT$. As most of what follows will be the same for both cases, $k=i$ and $k>i$, we will deal with them parallelly.

   % As the extenders backgrounding the construction of $\cN$ are in fact fully backgrounded by extenders in $\cW$, $\cN$, and hence $\cM_b^\cT$, is iterable via $\Sigma$ in $\cW$.
   % Not necessary: $\cM_b^\cT$ is in fact an iterate of $\cM_\infty^i$

    Finally, we argue that there is an embedding $\sigma$ such that $\pi_E \upharpoonright \cP_\infty^{i} = \sigma \circ \pi_b^\cT$. Recall the  internal direct limit system giving rise to $\cP_\infty^i$ in $\cM^{(k)}|\delta_{k+1}$.
    $\cM_b^\cT$ is iterable via a tail of $\Sigma$ in $\cW$ and hence strongly $f_{l}^{(i)}$-iterable in $\cM^{(k)}|\delta_{k+1}$ for every $l<\omega$. Hence $(\cM_b^\cT, l) \in \cI^{(i)}$ for any $l<\omega$. Recall that $\cM_b^\cT$ can be obtained as an initial segment of a hybrid backgrounded construction in $\cM^{(r)}|\delta_r$. By agreement of $\cM^{(r)}|\delta_{r+1}$ and $\pi_E(\cM^{(r)}|\delta_{r+1})$ up to the strength of $E$ and hence on $\cM_b^\cT \unlhd \cN$, $\cM_b^\cT$ is strongly $\pi_E(f_{l}^{(i)})$-iterable in $\pi_E(\cM^{(r)}|\delta_{r+1})$, where $\pi_E(f_{l}^{(i)})$ is the canonical function obtained from the images of the terms $\tau_n^{(i)}$, $n<l$, under $\pi_E$.
    %By agreement between $\cM$ and $\pi_E(\cM)$, $\cN$ is also the result of a fully backgrounded hod mouse construction in $\pi_E(\cM)$. 
    Let $\sigma_l$ be the corresponding direct limit embedding into $\pi_E(\cP_\infty^i)$. Then $\sigma = \bigcup_{l<\omega} \sigma_l \colon \cM_b^\cT \rightarrow \pi_E(\cP_\infty^i)$ is an embedding in $\cM^{(r)}|\delta_{r+1}$. We are left with showing that $\pi_E \upharpoonright \cP_\infty^{i} = \sigma \circ \pi_b^\cT$.

    In the case that $E$ is added by the fully backgrounded part of the $\cM$-construction, this follows as in the proof of the successor case in the proof of \cite[Theorem 6.5]{Sa15} (see also Lemma \ref{lem:backgroundedextenderscertified} below). In the case that $E$ is added by the generically countably complete part of the construction, i.e., in the case $k>i$, the argument for \eqref{item:LambdaequaltoSigma} above shows that \[ \pi_{E} \upharpoonright \cP_\infty^{i} = \pi^{\Sigma_{\cP_\infty^{i}}}_{\cP_\infty^{i}, \pi_{E}(\cP_\infty^{i})} = \sigma \circ \pi_b^\cT \] by our choice of $\sigma$ and $b = \Sigma_{\cP_\infty^i}(\cT)$.
   % Pick a $\pi_E(\cM)$-weak cutpoint $\delta^* > (\kappa_r)^{\cM}$ in $\pi_E(\cM)$ and consider $\cN_{\delta^*}$ up to an $(i+1)$-full Woodin cardinal in $\pi_E(\cM)|\delta_{r+1}$. \todo{Say what we mean by weak cutpoint, see handwritten notes. It seems like we can use this definition for all $\cN_{\delta^*}$-constructions, so we will give it earlier.} By agreement between $\cM$ and $\pi_E(\cM)$, $\cN_{\delta^*}$ is also the result of a fully backgrounded hod mouse construction in $\cM$ and we can compare $\cM_b^{\cT}$ against the construction of $\cN_{\delta^*}$ inside $\cW$. Recall that the extenders used in the $\cN_{\delta^*}$-construction in $\cM$ all come from extenders that are fully backgrounded in $\cW$ as the construction takes place in the interval $((\kappa_r)^{\cM}, \delta_{r+1})$. Recall that $\cM_b^\cT$ is a $\Sigma_{\cM_\infty^{i,-}}$-iterate of $\cM_\infty^{i,-}$. By stationarity and universality of fully backgrounded constructions as well as fullness preservation, $\cM_b^\cT$ iterates to $\cN_{\delta^*}$.
\end{proof}
This finishes the proof of Lemma \ref{lem:TailSigmaiInM}.
\end{proof}

\subsection{Certified extenders}\label{sec:certifiedextenders}

The following definition is crucial for the rest of this paper as it describes how we want to identify extenders that can serve as witnesses for the strongness of $\kappa_i$ for $i<\omega$.

\begin{definition}
  Let $F$ be an extender with critical point $\kappa_i$ for some $i<\omega$. Then we say $F$ is \emph{certified} (in $\cW$) iff \[ \pi_F \upharpoonright \cP_\infty^{i,-} = \pi^{\Sigma^{i,\infty}}_{\cP_\infty^{i,-}, \pi_F(\cP_\infty^{i,-})}, \] for $\Sigma^{i,\infty} = \Sigma_{\cP_\infty^{i,-}}$ the tail strategy of $\Sigma_i$ as described in the previous subsection. Here $\cP_\infty^{i,-}$ denotes $\cP_\infty^i | \delta_\infty^i$, where $\delta_\infty^i$ denotes the largest Woodin cardinal of the $\kappa_i$-suitable premouse $\cP_\infty^i$.
\end{definition}

\begin{remark*}
  An extender $F$ on $\cM$ with critical point $\kappa_i$ is uniquely determined by $\Ult(\cW,F)|\pi_F(\kappa_i)$ together with $\pi_F \upharpoonright \cP_\infty^{i,-}$ as $\cP_\infty^{i,-} \cap \Ord = (\kappa_i^+)^{\cM^{(i)}}$ by the following lemma. Recall that under $(\mathsf{IH})_i$, $\cM^{(i)}|\delta_{i+1} = \cM |\delta_{i+1}$ is well-defined and well-founded and, in particular, $(\kappa_i^+)^{\cM^{(i)}|\delta_{i+1}} = (\kappa_i^+)^{\cM^{(i)}} = (\kappa_i^+)^\cM$. So we will write $(\kappa_i^+)^\cM$ in this situation to simplify the notation. %\prpl{$(\kappa_i^+)^\cM$ or $(\kappa_i^+)^\cW$?}
\end{remark*}

\begin{lemma}
  Let $i \geq 0$ and suppose $(\mathsf{IH})_i$. Then \[ \cP_\infty^{i,-} \cap \Ord = (\kappa_i^+)^\cM. \]
\end{lemma}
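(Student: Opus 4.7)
The plan is to establish the equality by proving both inequalities, using the quasi-limit characterization of $\cM_\infty^i$.

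For the upper bound $\cM_\infty^{i,-} \cap \Ord \leq (\kappa_i^+)^\cM$, I would use Lemma~\ref{lem:quasidirectlimitkappai} to express $\cM_\infty^i$ as the quasi-limit, computed inside $\cM$, of the premice $\cN_{\delta^*}$ for $\kappa_i$-good $\delta^*$ cofinal in $\kappa_i$. Each such $\cN_{\delta^*}$ lies in $\cM|\kappa_i$ and has $\cM$-cardinality strictly less than $\kappa_i$, and there are at most $\kappa_i$ many such $\delta^*$. By Lemmas~\ref{lem:TailSigma0InM} and~\ref{lem:TailSigmaiInM} the iteration strategies $\Sigma_{\cM_\infty^j}\upharpoonright \cM$ for $j<i$ that are used to form the quasi-limit belong to $\cM$. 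Hence the entire quasi-limit is definable in $\cM$ from parameters of rank below $(\kappa_i^+)^\cM$, producing $\cM_\infty^i$ as an element of $\cM|(\kappa_i^{+})^\cM$, which yields $\delta_\infty^i\leq (\kappa_i^+)^\cM$.

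For the lower bound $\cM_\infty^{i,-}\cap\Ord\geq (\kappa_i^+)^\cM$, I would prove that $\delta_\infty^i$ is an $\cM$-cardinal strictly greater than $\kappa_i$; combined with the definition of $(\kappa_i^+)^\cM$ as the least $\cM$-cardinal above $\kappa_i$ this gives what we want. The strict inequality $\delta_\infty^i>\kappa_i$ follows from the cofinal character of the quasi-limit construction: the top Woodins $\delta_i^{\cN_{\delta^*}}$ of the direct system are unbounded in $\kappa_i$, and the direct-limit embeddings $\pi_{\delta^*,\infty}$ push these to an increasing sequence of images, whose supremum must exceed $\kappa_i$ since otherwise $\cM_\infty^i$ would fail to be $\kappa_i$-suitable. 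To see that $\delta_\infty^i$ is an $\cM$-cardinal, I would use the fullness-preservation property of $\Sigma=\Sigma_{\cM_\infty^{i-1}}$: by $\kappa_i$-suitability of $\cM_\infty^i$, we have
\[
\cM_\infty^i \,|\, (\delta_\infty^{i,+})^{\cM_\infty^i} \;=\; Lp^{\Sigma}(\cM_\infty^{i,-}),
\]
so any bijection in $\cM$ witnessing a collapse of $\delta_\infty^i$ to a smaller ordinal would be absorbed into this $Lp^{\Sigma}$-closure, and therefore into $\cM_\infty^i$, contradicting the Woodinness of $\delta_\infty^i$ there.

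The main obstacle will be the lower bound, and in particular a rigorous verification that $\delta_\infty^i$ is an $\cM$-cardinal. This requires a careful comparison between the bounded subsets of $\delta_\infty^i$ available in $\cM$ (coming from the fully backgrounded $\cM$-construction near that level) and those available in $\cM_\infty^i$ (via its $Lp^{\Sigma}$-closure). The argument crucially uses that $\Sigma_{\cM_\infty^j}\upharpoonright\cM \in \cM$ for each $j\leq i$, together with the agreement between $\cM$ and the hod pair construction output, to guarantee that no additional $\cM$-definable subset of $\delta_\infty^i$ can escape $\cM_\infty^i$.
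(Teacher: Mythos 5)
Your route is genuinely different from the paper's, and as written both halves of it break down. The paper's proof is the standard $\HOD$-computation identity: $\cM_\infty^i$ is a direct limit of countable suitable pairs $(\cN,\vec A)$ formed in the derived model $M$ of $\cM[G]$ for $G\subseteq\Col(\omega,{<}\kappa_i)$, so $\cM_\infty^{i,-}\cap\Ord=\theta_0^M$ by the argument of \cite[Lemma 2.7(b)]{SaSch18} (see also \cite[Lemma 3.38(2)]{StW16}), and $\theta_0^M=\omega_2^{\cM[G]}=(\kappa_i^+)^\cM$. Your upper bound cannot be right: if $\cM_\infty^i$ were an element of $\cM|(\kappa_i^+)^\cM$, then $\cM_\infty^{i,-}\cap\Ord<(\kappa_i^+)^\cM$ would hold \emph{strictly}, contradicting the very equality being proved. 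The cardinality count is the tell: the quasi-limit is indeed a union of $\kappa_i$ many structures of $\cW$-cardinality ${<}\kappa_i$, hence has $\cW$-cardinality $\kappa_i$; this is consistent with the lemma only because $(\kappa_i^+)^\cM<(\kappa_i^+)^\cW$ and because the maps $\pi_{\delta^*,\delta^{**}}$ of the system are produced by comparisons carried out in $\cW$ via the external strategy and are not uniformly elements of $\cM$ (Lemmas \ref{lem:TailSigma0InM} and \ref{lem:TailSigmaiInM} put certain tail strategies of the direct limits into $\cM$, not the system of embeddings below $\kappa_i$). So ``definable in $\cM$ from small parameters'' is unjustified, and if it did hold it would refute the statement rather than prove half of it.

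The lower bound also has a gap at exactly the point you flag. That $\delta_\infty^i$ is a cardinal of $\cM$ does not follow from fullness preservation as you invoke it: a bijection $f\colon\alpha\to\delta_\infty^i$ in $\cM$ with $\alpha<\delta_\infty^i$ is just some set in $\cM$, and nothing forces it to appear in $Lp^{\Sigma}(\cM_\infty^{i,-})$, which is a stack of countably iterable $\Sigma$-mice over $\cM_\infty^{i,-}$; the agreement between $\cM$ near $(\kappa_i^+)^\cM$ and the $Lp^{\Sigma}$-closure of the direct limit is essentially what the certification machinery of Section \ref{sec:certifiedextenders} is designed to establish and cannot be assumed here. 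Likewise, ``otherwise $\cM_\infty^i$ would fail to be $\kappa_i$-suitable'' is not an argument that the images of the Woodins of the $\cN_{\delta^*}$ sup up past $\kappa_i$. The robust repair is to argue as the paper does: every point of $\cM_\infty^{i,-}$ is ordinal definable in $M$ from a real coding a pair $(\cN,\vec A)$, giving $\delta_\infty^i\leq\theta_0^M$; the term-capturing of $\OD$ sets of reals by suitable premice gives $\theta_0^M\leq\delta_\infty^i$; and $\theta_0^M=\omega_2^{\cM[G]}=(\kappa_i^+)^\cM$.
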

\begin{proof}
  The proof uses similar ideas as \cite[Lemma 2.7(b)]{SaSch18} (cf., also \cite[Lemma 3.38(2)]{StW16}). For the easier inequality $\cP_\infty^{i,-} \cap \Ord = \delta_\infty^i \leq (\kappa_i^+)^\cM$, let $\alpha < \delta_\infty^i$. Then there is some $(\cQ,l) \in \cI^{(i)}$ and some $\beta < \gamma_{f_l^{(i)}}^\cQ$ such that $\pi_{(\cQ,l),\infty}(\beta) = \alpha$. Consider \[ B = \{ (\gamma, \cQ') \mid (\cQ,l) \leq_{\cI^{(i)}} (\cQ',l) \text{ and } \gamma < \pi_{(\cQ,l),(\cQ',l)}(\beta) \} \] and the function $g \colon B \rightarrow \Ord$ given by \[ g(\gamma,\cQ') = \pi_{(\cQ',l),\infty}(\gamma). \]
  Then $\alpha \subseteq \ran(g)$ and $B,g \in \cM$. As $B$ is of size at most $\kappa_i$ in $\cM$, $\alpha < (\kappa_i^+)^\cM$.

  For the other inequality, $(\kappa_i^+)^\cM \leq \delta_\infty^i$, let $\alpha < (\kappa_i^+)^\cM$. Let $f \colon \kappa_i \rightarrow \alpha$ be bijective, $f \in \cM$. In particular, $f \in \cW$ and we can fix a $\Sigma_1$ Skolem term $\tau$ and a finite set of indiscernibles $s$ such that for each $\eta < \alpha$ there is some $\beta < \kappa_i$  such that \[ \tau^{\cW|\max(s)}[\beta, s^-] = \eta, \] where $s^- = s \setminus \{\max(s)\}$. So fix some $\eta$ and $\beta$ such that $\eta = \tau^{\cW|\max(s)}[\beta, s^-] < \alpha$. 

  Recall that, in $\cW$, $\cN_{\delta_0^*}^+$ is a $\Sigma$-iterate of $\cP^i$, where $\delta_0^*$ is the least $\kappa_i$-weak cutpoint in $\cM|\kappa_i$ above $\delta_i$. Hence, we can lift the iteration $\cP^i \rightarrow \cN_{\delta_0^*}^+$ to an iteration $\cW \rightarrow \cN_0^\cW$.
  By standard arguments from $\HOD$ computations, see, for example, \cite{StW16}, and properties of the iteration strategy $\Sigma$, there is an iterate $\cN^\cW$ of $\cN_0^\cW$ such that, letting $\cN \unlhd \cN^\cW$ be the image of $\cN_{\delta_0^*}^+ \unlhd \cN_0^\cW$ under this iteration,
  \begin{enumerate}
      \item $(\cN,l) \in \cI^{(i)}$ for any $l<\omega$,\label{eq:1inkappapluscomputation}
      \item there is some $\zeta < \kappa_i$ that is a cutpoint cardinal in $\cW$, $\delta^\cN = (\zeta^+)^\cW$ and $\cW|\zeta$ is generic over $\cN$ for the $\delta^\cN$ generator version of the extender algebra $\mathbb{B}^\cN$,\label{eq:extenderalgebrainkappapluscomputation}
      \item $\beta$ is below the least measurable cardinal of $\cN$, 
      \item $\beta < \gamma_{f_l^{(i)}}^\cN$ for any sufficiently large $l<\omega$, and
      \item whenever $j \colon \cN^\cW \rightarrow \cN'$ is a $\Sigma$-iterate of $\cN^\cW$ acting only on $\cN$, $j(s) = s$ and $j(\eta) = \eta$.\label{eq:5inkappapluscomputation}
  \end{enumerate}
  By $\cP$-constructions\footnote{See \cite[Lemma 3.42]{Sa15} for $\cP$-constructions in this hybrid setting. For ordinary mice, $\cP$-constructions were introduced by John Steel. This is why Sargsyan calls them $S$-constructions in \cite{Sa15}.} in $\cW$, \eqref{eq:extenderalgebrainkappapluscomputation} implies that there is a proper class $\Sigma$-mouse $\cP(\cN)$ over $\cN$ such that $\cW|\zeta$ is generic over $\cP(\cN)$ and $\cP(\cN)[\cW|\zeta] = \cW$.

  Write $\nu^\cN$ for the least measurable cardinal of $\cN$ and let \[ P^\cN = \{ \xi \mid \exists \gamma < \nu^\cN \exists p \in \mathbb{B}^\cN \, p \Vdash^{\cP(\cN)} \tau^{\cP(\cN)[\dot{G}]|\max(\check{s})}[\check{\gamma}, \check{s}^-] = \check{\xi} \}. \]
  Then $\eta \in P^\cN$ and, by the $\delta^\cN$-c.c. of $\mathbb{B}^\cN$, the order type of $P^\cN$ is less than $\delta^\cN$. Therefore, we can let $\mu_\eta^\cN$ be the unique $\mu < \delta^\cN$ such that $\eta$ is the $\mu$-th element of $P^\cN$. Moreover, let $\mu_\eta = \pi_{(\cN,l), \infty}(\mu_\eta^\cN) < \delta_\infty^i$ for $l$ large enough such that this is defined.

  \begin{claim}
      The function $\alpha \rightarrow \delta_\infty^i$ given by $\eta \mapsto \mu_\eta$ is well-defined, i.e., independent of the choice of $\cN$, and order-preserving.
  \end{claim}
  \begin{proof}
      Let $\eta \leq \xi < \alpha$, say $\eta = \tau^{\cW|\max(s)}[\beta, s^-]$ and $\xi = \tau^{\cW|\max(s)}[\epsilon, s^-]$. Let $\cN$ and $\cQ$ be $\Sigma$-iterates of $\cP^i$ with properties \eqref{eq:1inkappapluscomputation} - \eqref{eq:5inkappapluscomputation} as above such that $\eta \in P^\cN$ and $\xi \in P^\cQ$. Let $\cR$ be a common $\Sigma$-iterate of $\cN$ and $\cQ$ with $(\cR,l) \in \cI^{(i)}$ for any $l<\omega$ and write $\cR^\cW$ for the common iterate of $\cN^\cW$ and $\cQ^\cW$, respectively, when the iterations are lifted from $\cP^i$ to $\cW$. Let $j_\cN \colon \cN^\cW \rightarrow \cR^\cW$ and $j_\cQ \colon \cQ^\cW \rightarrow \cR^\cW$ be the corresponding iteration embeddings. Then $j_\cN(\beta,\eta,s) = (\beta, \eta, s)$ and $j_\cQ(\epsilon, \xi, s) = (\epsilon, \xi, s)$. Therefore, $j_\cN(\mu_\eta^\cN) = \mu_\eta^\cR$ and $j_\cQ(\mu_\xi^\cQ) = \mu_\xi^\cR$. Hence, \[ \eta \leq \xi \text{ iff } \mu_\eta^\cR \leq \mu_\xi^\cR \text{ iff } \mu_\eta \leq \mu_\xi. \]
  \end{proof}
    This yields $\alpha < \delta_\infty^i$, as desired.
\end{proof}

\subsection{Preservation of certification under hulls}

We argue that taking hulls preserves the certification of extenders.

\begin{lemma}\label{lem:characterizationhull}
  Let $i < \omega$ and suppose $(\mathsf{IH})_i$, so $\cM^{(i)}|\delta_{i+1}$ is well-defined. Let $\bar\cW$ be the transitive collapse of a hull of $\cW | \Omega$ for some sufficiently large $\Omega$ such that $\cW|(\delta_i^{+\omega})^\cW \subseteq \bar\cW$. % and $|\bar\cW| < \delta_{i+1}$.
  Let \[\pi \colon \bar\cW \rightarrow \cW\] be the uncollapse embedding, $\bar\cM$ the collapse of $\cM^{(i)}|\Omega$, and $(\bar\kappa_j, \bar\delta_j \mid j<\omega)$ the preimages of $(\kappa_j, \delta_j \mid j < \omega)$.\footnote{So, in particular, $\bar\kappa_j = \kappa_j$ and $\bar\delta_j = \delta_j$ for $j < i$ and $\bar\delta_i = \delta_i$.} Then every extender on the sequence of $\bar\cM$ with critical point $\kappa_i$ is certified in $\bar\cW$.
   %Note here that we mean characterized via the $\cM_\infty$ constructed adding the strategy of the collapsed hull in the definition of the quasi-limit.
%  If we want to use that the $\cM_\infty$ is the same, we should probably suppose that the strategies of $\bar\cM | \kappa_i$ and $\cM | \kappa_i$ are the same as well since they are added in the backgrounded constructions to obtain the models that give rise to $\cM_\infty$ as a quasi-limit.
\end{lemma}

%\begin{remark*}
%  Note that Lemma \ref{lem:characterizationhull} is not trivial even though we do not collapse the direct limit $\cM^i_\infty$. The extender we want to certify gets collapsed as its index gets collapsed, so the certification in the collapse $\bar\cW$ uses a different tree on $\cM^i_\infty$ than the certification in $\cW$ does.
%\end{remark*}

\begin{proof}
  Let $E$ be an extender on the sequence of $\bar\cM$ with critical point $\kappa_i$ and let $F = \pi(E)$ be its preimage under the collapse. We aim to show that \[ \pi_E \upharpoonright \bar\cP_\infty^{i,-} = \pi^{\bar\Sigma^{i,\infty}}_{\bar\cP_\infty^{i,-},\pi_E(\bar\cP_\infty^{i,-})}, \] for $\bar\cP_\infty^{i,-}$ the collapse of $\cP_\infty^{i,-}$ and $\bar\Sigma^{i,\infty} = (\Sigma_i)_{\bar\cP_\infty^{i,-}}$. As $F$ is an extender on the sequence of $\cM^{(i)}$ with critical point $\kappa_i$, it is certified in $\cW$. Therefore, we can find a stack of iteration trees $\vec\cU$ on $\cP_\infty^{i,-}$ with last model $\cM^{\vec\cU}_\infty = \pi_F(\cP_\infty^{i,-})$ such that \[ \pi^{\Sigma^{i,\infty}}_{\cP_\infty^{i,-}, \pi_F(\cP_\infty^{i,-})} = \pi_\infty^{\vec\cU}, \] where $\pi_\infty^{\vec\cU} \colon \cP_\infty^{i,-} \rightarrow \cM_\infty^{\vec\cU}$ denotes the iteration embedding along $\vec\cU$. In particular, $\vec\cU$ is by $\Sigma^{i,\infty}$. Let $\vec\cT$ be the stack of iteration trees on $\cP^{i,-}$ with last model $\cP_\infty^{i,-}$ according to $\Sigma^i$.
  Let $\bar\cT$ and $\bar\cU$ be the images of $\vec\cT$ and $\vec\cU$ under the collapse, i.e., $\pi(\bar\cT, \bar\cU) = (\vec\cT, \vec\cU)$. Then by hull condensation (cf. Definition \ref{def:hullcondensation}) applied to the stacks $\vec\cT ^\frown \vec\cU$ and $\bar\cT ^\frown \bar\cU$ on $\cP^{i,-}$, the stack $\bar\cT ^\frown \bar\cU$ is by $\Sigma^{i}$ as well. In particular, $\bar\cU$ is by $\bar\Sigma^{i,\infty}$.
  By elementarity of $\pi$, we have $\pi_E(\bar\cP_\infty^{i,-}) = \cM_{\infty}^{\bar\cU}$. So \[ \pi_{\infty}^{\bar\cU} \colon \bar\cP_\infty^{i,-} \rightarrow \pi_E(\bar\cP_\infty^{i,-}). \] As $\bar\cU$ is by $\bar\Sigma^{i,\infty}$ this implies \[ \pi^{\bar\Sigma^{i,\infty}}_{\bar\cP_\infty^{i,-},\pi_E(\bar\cP_\infty^{i,-})} = \pi_{\infty}^{\bar\cU}. \]
  Note that as $F$ is certified, \[ \pi_F \pwimg \cP_\infty^{i,-} = \rng(\pi_\infty^{\vec\cU}). \] By elementarity of $\pi$, this implies \[ \pi_E \pwimg \bar\cP_\infty^{i,-} = \rng(\pi_{\infty}^{\bar\cU}). \]
  The following claim finishes the proof of Lemma \ref{lem:characterizationhull}.
  \begin{claim}
    \[ \pi_E \upharpoonright \bar\cP_\infty^{i,-} = \pi_{\infty}^{\bar\cU}. \] 
  \end{claim}
  \begin{proof}
    Suppose toward a contradiction that $\pi_E \upharpoonright \bar\cP_\infty^{i,-} \neq \pi_{\infty}^{\bar\cU}$ and let $a$ be minimal (in the canonical well-order of $\bar\cP_\infty^{i,-}$) with $\pi_E(a) \neq \pi_{\infty}^{\bar\cU}(a)$. Say $\pi_E(a) > \pi_{\infty}^{\bar\cU}(a)$. Then $\pi_{\infty}^{\bar\cU}(a) \not\in \rng(\pi_E \upharpoonright \bar\cP_\infty^{i,-}) = \pi_E \pwimg \bar\cP_\infty^{i,-}$ as there is no element of $\bar\cP_\infty^{i,-}$ that $\pi_E$ can map to $\pi_{\infty}^{\bar\cU}(a)$ while respecting the well-order of $\bar\cP_\infty^{i,-}$. This contradicts $\pi_E \pwimg \bar\cP_\infty^{i,-} = \rng(\pi_{\infty}^{\bar\cU})$. The argument in the case $\pi_E(a) < \pi_{\infty}^{\bar\cU}(a)$ is similar.
  \end{proof}
\end{proof}

\subsection{All extenders on the $\cM$-sequence are certified}

Similarly as in \cite[Lemma 2.25]{Sa17} we have that fully backgrounded extenders with critical point $\kappa_i$ for some $i \geq 0$ are certified.

\begin{lemma}\label{lem:backgroundedextenderscertified}
  Let $i \geq 0$, suppose $(\mathsf{IH})_i$ and let $E$ be an extender on the sequence of $\cM^{(i)}$ with critical point $\kappa_i$ that is added by the fully backgrounded part of the construction of $\cM^{(i)}$. Then $E$ is certified in $\cW$.
\end{lemma}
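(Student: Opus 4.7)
The plan is to adapt the argument from \cite[Lemma 1.23]{Sa17} to our setting, exploiting that $E$ inherits its action on $\cM_\infty^{i,-}$ directly from its background extender on $\cW$, and then reading off certification from the generic closure of $\cW$ under $\Sigma_i$. So suppose $E$ with $\crit(E) = \kappa_i$ was added to the $\cM$-sequence by clause \eqref{item:backgroundext} of Definition \ref{def:translationprocedure}. Then there is an extender $F^*$ on the $\cW$-sequence and some $\nu$ below the index of $E$ with $V_{\nu+\omega}^\cW \subseteq \Ult(\cW, F^*)$ and $E \upharpoonright \nu = F^* \cap ([\nu]^{<\omega} \times \cM|\alpha)$. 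Since $E$ is an extender on $\cM$ with critical point $\kappa_i$ and $\cM_\infty^{i,-}$ has ordinal height $(\kappa_i^+)^\cM$, we may (and do) choose $\nu$ large enough so that $\cM_\infty^{i,-} \in \cM | \nu \subseteq \cW | \nu$. The coherence condition then yields $\pi_E \upharpoonright \cM_\infty^{i,-} = \pi_{F^*} \upharpoonright \cM_\infty^{i,-}$, and in particular $\pi_E(\cM_\infty^{i,-}) = \pi_{F^*}(\cM_\infty^{i,-})$.

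Next, I would argue that $\pi_{F^*} \upharpoonright \cM_\infty^{i,-}$ is, in fact, the canonical tail-strategy iteration embedding. By elementarity of $\pi_{F^*}$, inside $\Ult(\cW, F^*)$ the object $\pi_{F^*}(\cM_\infty^{i,-})$ is the analogue of $\cM_\infty^{i,-}$ built relative to $\pi_{F^*}(\Sigma_i)$, i.e., at the shifted parameter $\pi_{F^*}(\kappa_i)$ from the shifted hod pair $\pi_{F^*}(\cP_i)$. Because $\cW$ is generically closed under $\Sigma_i$ (clause (3) of Definition \ref{def:translatablestructure}) and $\Sigma_i$ has hull condensation, $\pi_{F^*}(\Sigma_i)$ is precisely a tail of $\Sigma_i$ acting on iterates of $\cP_i$, so $\pi_{F^*}(\cM_\infty^{i,-})$ is identified with a genuine $\Sigma^{i,\infty}$-iterate of $\cM_\infty^{i,-}$. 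The ultrapower embedding $\pi_{F^*} \upharpoonright \cM_\infty^{i,-}$ corresponds to a (possibly non-trivial) stack on $\cM_\infty^{i,-}$ realized via $F^*$ inside $\cW$ and by generic closure lies in the domain of the extended strategy $\Sigma_i^g$.

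Finally, uniqueness of the iteration embedding will give the identification $\pi_E \upharpoonright \cM_\infty^{i,-} = \pi^{\Sigma^{i,\infty}}_{\cM_\infty^{i,-}, \pi_E(\cM_\infty^{i,-})}$. Concretely, if the tail strategy produces some branch embedding $\pi_b$ onto the same iterate, then comparing ranges as in the claim at the end of the proof of Lemma \ref{lem:characterizationhull}, both $\rng(\pi_E \upharpoonright \cM_\infty^{i,-})$ and $\rng(\pi_b)$ coincide (since both are given by a direct limit of the same $\vec B$-iterable system under the strategy), and the canonical well-order of $\cM_\infty^{i,-}$ forces the two embeddings to agree pointwise; this is exactly certification.

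The main obstacle is the middle step: transferring a single ultrapower by the background extender $F^*$ on $\cW$ into a legitimate iteration of $\cM_\infty^{i,-}$ by $\Sigma^{i,\infty}$. One has to know that $F^*$ induces a move that is strategy-coherent, rather than merely an ad hoc ultrapower. This is where generic closure of $\cW$ under $\Sigma_i$ together with hull condensation (and the strong guiding of $\Sigma_i$ by term relations, clause (3) of Definition \ref{def:translatablestructure}) is essential: they guarantee that the branch picked out by $F^*$ inside $\Ult(\cW, F^*)$ matches the one picked out by the strategy on the corresponding tree on $\cM_\infty^{i,-}$.
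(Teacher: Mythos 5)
There is a genuine gap at the very first step of your argument. Clause \eqref{item:backgroundext} of Definition \ref{def:translationprocedure} only gives agreement of $E$ with its background $F^*$ as extenders below some fixed $\nu$ less than the index of $E$; it does \emph{not} give $\pi_E \upharpoonright \cM_\infty^{i,-} = \pi_{F^*} \upharpoonright \cM_\infty^{i,-}$, nor $\pi_E(\cM_\infty^{i,-}) = \pi_{F^*}(\cM_\infty^{i,-})$. The two ultrapowers are formed from different classes of functions (those of the current level of the construction versus those of $\cW$), so all one obtains is the canonical factor map $k \colon \Ult(\cN_{\xi+1},E) \rightarrow \Ult(\cW,F^*)$, $[a,f]_E \mapsto [a,f]_{F^*}$, with $\pi_{F^*} \upharpoonright \cM_\infty^{i,-} = k \circ (\pi_E \upharpoonright \cM_\infty^{i,-})$. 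The map $k$ is the identity only up to the level of agreement $\nu$, whereas $\pi_E(\cM_\infty^{i,-})$ sits at rank about $\pi_E\bigl((\kappa_i^+)^\cM\bigr)$, far above $\nu$; so the two images of $\cM_\infty^{i,-}$ need in general be different, and having $\cM_\infty^{i,-} \in \cM|\nu$ does not help. Your middle step --- that $\pi_{F^*} \upharpoonright \cM_\infty^{i,-}$ equals the strategy embedding $\pi_{b^*}^{\cU^*}$ because $F^*$ is a $\cW$-extender and $(\Sigma_i)^g$ is strongly guided by term relations --- is correct and is exactly the paper's starting point. But the conclusion concerns $\pi_E$, not $\pi_{F^*}$, and transferring the certification from $F^*$ down to $E$ is the actual content of the lemma; your proposal bypasses it by asserting the two restricted embeddings are equal.

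The paper closes this gap as follows. Let $\cU$ be the tree on $\cM_\infty^{i,-}$ whose strategy branch $b = \Sigma^{i,\infty}(\cU)$ gives the iteration embedding to $\pi_E(\cM_\infty^{i,-})$, and let $\cU^*$, $b^*$ be the corresponding objects for $\pi_{F^*}(\cM_\infty^{i,-})$. Using the factor map $k$ one pulls $b^*$ back to a cofinal branch $c$ through $\cU$ with $k(\cM_c^\cU) = \cM_{b^*}^{\cU^*}$ and $\pi_E \upharpoonright \cM_\infty^{i,-} = \pi_c^\cU$; then $\cU^\frown c$ is a hull of $\cU^{*\frown}b^*$, so hull condensation of $\Sigma^{i,\infty}$ yields $c = \Sigma^{i,\infty}(\cU) = b$, which is certification. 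Your closing appeal to ``uniqueness of the iteration embedding'' via a range comparison as in Lemma \ref{lem:characterizationhull} does not substitute for this: in that lemma the identity $\pi_E \pwimg \cM_\infty^{i,-} = \rng(\pi_{\bar b}^{\bar\cU})$ is obtained by elementarity from an extender already known to be certified, whereas here the equality of $\rng(\pi_E \upharpoonright \cM_\infty^{i,-})$ with the range of a strategy branch embedding is precisely what must be proved.
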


\begin{proof}
  Let $F$ be an extender with critical point $\kappa_i$ that gets added by the fully backgrounded part of the construction of $\cM^{(i)}$. Say $\cN_{\xi+1} = (J_\alpha^{\vec E}, \in, \vec E, F)$ and let $F^*$ be the background extender of $F$. Let $\cU$ be an iteration tree on $\cP_\infty^{i,-}$ of limit length such that if we let $b = \Sigma^{i,\infty}(\cU)$, \[ \pi^{\Sigma^{i,\infty}}_{\cP_\infty^{i,-}, \pi_F(\cP_\infty^{i,-})} = \pi_b^\cU. \] Moreover, let $\cU^*$ be an iteration tree on $\cP_\infty^{i,-}$ of limit length such that if we let $b^* = \Sigma^{i,\infty}(\cU^*)$, \[ \pi^{\Sigma^{i,\infty}}_{\cP_\infty^{i,-}, \pi_{F^*}(\cP_\infty^{i,-})} = \pi_{b^*}^{\cU^*}. \]
  Let \[ k \colon \Ult(\cN_{\xi+1}, F) \rightarrow \Ult(\cW,F^*) \] be the canonical realization map. Then $k \upharpoonright \pi_F(\cP_\infty^{i,-}) \colon \pi_F(\cP_\infty^{i,-}) \rightarrow \pi_{F^*}(\cP_\infty^{i,-})$ makes the diagram in Figure \ref{fig:backgroundedextenderscertified} commute.

  \begin{figure}[htb]
      \begin{tikzpicture}
        \draw[->] (0,-1.55) -- node[above] {$\pi_{F^*}$} (3.3,0) node[right]
        {$\pi_{F^*}(\cP_\infty^{i,-})$}; 

        \draw[->] (0,-1.75) node[left] {$\cP_\infty^{i,-}$}-- node[below] {$\pi_F$} (3.3,-1.75) node[right] {$\pi_F(\cP_\infty^{i,-})$};
       
        \draw[->] (4.1,-1.4) -- node[right] {$k$} (4.1,-0.3);
      \end{tikzpicture}
      \caption{Realization into the ultrapower by the background extender.}\label{fig:backgroundedextenderscertified}
    \end{figure}

    As $F^*$ is an extender on the sequence of $\cW$ and $(\Sigma_i)^g$ is strongly guided by term relations for all $g \subseteq \Col(\omega, \cP^i)$, \[ \pi_{F^*} \upharpoonright \cP_\infty^{i,-} = \pi_{b^*}^{\cU^*}. \] We now define a cofinal branch $c$ through $\cU$ such that $k(\cM_c^\cU) = \cM_{b^*}^{\cU^*}$ and $\pi_{F} \upharpoonright \cP_\infty^{i,-} = \pi_{c}^{\cU}$.

    Let \[ X = \{ \alpha < \lh(\cU) \mid \exists \beta \, \alpha = k(\pi_F(\beta)) \text{ and } b_\alpha^* \in \pi_{F^*}(\cN_{\xi+1}) \}. \]
    For $\alpha \in X$ let $c_\alpha \in \Ult(\cN_{\xi+1},F)$ be such that \[ k(c_\alpha) = b_\alpha^*. \]
Let $c = \bigcup_{\alpha \in X} c_\alpha$. Then $c$ is as desired and the following claim finishes the proof of Lemma \ref{lem:backgroundedextenderscertified}.

  \begin{claim*}
    $b=c$.
  \end{claim*}
  \begin{proof}
    Both $\cU^\frown c$ and $\cU^*{}^\frown b^*$ are iteration trees on $\cP_\infty^{i,-}$ and $\cU^\frown c$ is a hull of $\cU^*{}^\frown b^*$. As $\cU^*{}^\frown b^*$ is according to $\Sigma^{i,\infty}$, by hull condensation $\cU^\frown c$ is according to $\Sigma^{i,\infty}$ as well and hence \[ c = \Sigma^{i,\infty}(\cU) = b. \]
  \end{proof}
\end{proof}

Now we argue that generically countably complete extenders are certified as well.

\begin{lemma}\label{lem:genctblycompleteextenderscertified}
   Let $i \geq 0$, suppose $(\mathsf{IH})_i$ and let $E$ be an extender on the sequence of $\cM$ with critical point $\kappa_i$ that is added by the generically countably complete part of the construction of $\cM$.\footnote{We are not assuming that $\cM$ is a proper class model here. The construction of $\cM$ might break down at some stage after $E$ was added.} Then $E$ is certified in $\cW$.
 \end{lemma}

 \begin{proof}
   Let $E$ be an extender on the sequence of $\cM$ with critical point $\kappa_i$ that is added by the generically countably complete part of the construction of $\cM$. Let $\hat\cW$ be an elementary substructure of size ${<}\kappa_i$ of a sufficiently large initial segment $\cW|\Omega$ of $\cW$ with $\cW|(\delta_i^{+\omega})^\cW \subseteq \hat\cW$. Let \[ \sigma \colon \hat\cW \rightarrow \cW|\Omega \] and suppose that all relevant objects are in the range of $\sigma$. For notational simplicity write $\cM$ for the result of the $\cM$-construction in $\cW|\Omega$ and let $\hat\cM$ be the transitive collapse of $\cM$, i.e., the result of the $\cM$-construction in $\hat\cW$. Say $\sigma(\hat E, \hat\cP_\infty^i, \hat\Sigma^{i,\infty}) = (E, \cP_\infty^i, \Sigma^{i,\infty})$.

Then, by hull condensation, $\hat\cP_\infty^i$ is an iterate of $\cP^i$ via $\Sigma^i$. Hence, $\hat\cP_\infty^i$ is an element of the direct limit system giving rise to $\cP_\infty^i$. So when comparing $\hat\cP_\infty^{i}$ via $\hat\Sigma^{i,\infty}$ and $\cP_\infty^{i}$ via $\Sigma^{i,\infty}$ inside $\cW$, $\hat\cP_\infty^{i}$ iterates to $\cP_\infty^{i}$. Moreover, \[ \sigma \upharpoonright \hat\cP_\infty^{i,-} = \pi^{\hat\Sigma^{i,\infty}}_{\hat\cP_\infty^{i,-}, \sigma(\hat\cP_\infty^{i,-})} \] as there are term relations that are strongly guiding $\hat{\Sigma}^{i,\infty}$. We have that $E = \sigma(\hat E)$ is generically countably complete, so by Fact \ref{fact:realization} there is a realization map \[ \tau \colon \Ult(\hat \cM, \hat E) \rightarrow \cM \] such that $\sigma \upharpoonright \hat\cM = \tau \circ \pi_{\hat E}$ where $\pi_{\hat E} \colon \hat \cM \rightarrow \Ult(\hat \cM, \hat E)$ denotes the ultrapower embedding. Therefore, $\pi_{\hat E}$ respects $\hat\Sigma^{i,\infty}$ and \[ \pi_{\hat E} \upharpoonright \hat\cP_\infty^{i,-} = \pi^{\hat\Sigma^{i,\infty}}_{\hat\cP_\infty^{i,-}, \pi_{\hat E}(\hat\cP_\infty^{i,-})}. \]
So $\hat E$ is certified via $\hat\Sigma^{i,\infty}$. By elementarity of $\sigma$, this implies that $E$ is certified via $\Sigma^{i,\infty}$, as desired.
\end{proof}

\subsection{Characterization via operators}\label{subsec:operators}

Now we argue that we can also characterize extenders using operators in order to obtain the extenders in a simply definable way (e.g., in $M_1^{\#,\Sigma_i}$ of the current level).
In what follows, we define universal mice at $\kappa_i$ and $\kappa_i$-good universal mice as in \cite{Sa17}. The arguments in \cite{Sa17} generalize to $\kappa_i$, $i>0$, by considering $\cM$-constructions instead of fully backgrounded constructions. In particular, \cite[Lemmas 2.15 and 2.16]{Sa17} generalize and we can show that good universal mice exist. We give the details in what follows.

As usual, we write \[ \cO_\eta^\cQ = \bigcup \{ \cQ||\beta \mid \eta \leq \beta \leq \cQ \cap \Ord, \rho_\omega(\cQ||\beta) \leq \eta \text{ and } \eta \text{ is a cutpoint in } \cQ \} \] for premice $\cQ$ and ordinals $\eta < \cQ \cap \Ord$.

\begin{definition}
  Let $i<\omega$, suppose $(\mathsf{IH})_i$ and let $X \in \cW|\delta_{i+1}$ be a premouse such that $\cM^{(i)}|\delta_i \unlhd X$. We say $\cQ \subseteq \cW|\delta_{i+1}$ is a \emph{universal mouse over $X$ at $\kappa_i$} if $\cQ$ is an $X$-premouse such that $\cQ$ is $(\delta_{i+1}, \delta_{i+1})$-iterable with respect to the extenders on its sequence with critical point above $X \cap \Ord$, for every strong cutpoint $\eta \in (X \cap \Ord, \delta_{i+1})$ in $\cQ$, $\cO_\eta^\cQ = Lp^\infty(\cQ|\eta)$ and for a stationary set of $\xi < \delta_{i+1}$, $(\xi^+)^\cQ = (\xi^+)^\cW$. Moreover, in case $i>0$, $\cQ$ is \emph{$\kappa_j$-good} for all $j<i$, i.e., whenever $\cR^\cQ$ (defined inductively as described below) is a $\Sigma_j$-iterate of $\cP^j$ and whenever $g$ is generic over $\cW$ for a partial order in $\cQ$, $(\Sigma_j^g)_{\cR^\cQ} \upharpoonright \cQ[g] \in L_1(\cQ[g])$.
\end{definition}

Before we can define good universal mice (and the object $\cR^\cQ$ used in the previous definition), we need some additional notation. Let $i<\omega$, suppose $(\mathsf{IH})_i$ and let $X \in \cW|\delta_{i+1}$ be a premouse with $X \unrhd \cM^{(i)}|\delta_i$. For any $X$-premouse $\cQ \subseteq \cW|\delta_{i+1}$, let $\kappa^\cQ$ denote the least ${<}\delta_{i+1}$-strong cardinal of $\cQ$, if it exists. The reader might wish to recall the definition of being $\kappa_i$-suitable (see Definition \ref{def:kappaisuitable}) at this point. For any universal mouse $\cQ \subseteq \cW|\delta_{i+1}$ over $X$ at $\kappa_i$, consider, analogous to Section \ref{subsec:i>0}, the directed system given by 
 \begin{eqnarray*}
    \cI^{(i)}_{\cQ} = \{ (\cR,l) \mid \cR \text{ is $\kappa_i$-suitable, strongly $f^{(i)}_{l}$-iterable in $\cQ$, and} \\ \text{countable in $\cQ^{\Col(\omega,{<}\kappa^\cQ)}$} \}.
    \end{eqnarray*}
Note that for an $\cR$ as in the definition of $\cI^{(i)}_\cQ$ the notion of $\kappa_i$-suitability is well-defined inside $\cQ$ as $\cQ$ is sufficiently closed under the operation $X \mapsto Lp^{\Sigma_{\cR},\infty}(X)$ by the proof of Lemma \ref{lem:LpatkappaiinM}.
Write $\cP_\infty^\cQ$ for the corresponding direct limit and $\delta_\infty^\cQ$ for the largest Woodin cardinal in $\cP_\infty^\cQ$. Note that $\cP_\infty^\cQ \in \cQ$ and let \[ \cR^\cQ = \cP_\infty^\cQ | ((\delta_\infty^\cQ)^{+\omega})^{\cP_\infty^\cQ}. \]

\begin{definition}
  Let $i<\omega$, suppose $(\mathsf{IH})_i$ and let $X \in \cW|\delta_{i+1}$ be a premouse such that $\cM^{(i)}|\delta_i \unlhd X$. We say a universal mouse $\cQ$ over $X$ at $\kappa_i$ is a \emph{$\kappa_i$-good universal mouse over $X$} iff it has a $(\delta_{i+1}, \delta_{i+1})$-iteration strategy $\Phi$ with respect to extenders with critical point above $X \cap \Ord$ such that whenever $i \colon \cQ \rightarrow \cN$ is an embedding obtained from an iteration according to $\Phi$, $\cN$ is a universal mouse, $\cR^\cN$ is a $\Sigma_i$-iterate of $\cP^i$, and whenever $g$ is generic over $\cW$ for a partial order in $\cN$, \[ (\Sigma^g_i)_{\cR^\cN} \upharpoonright \cN[g] \in L_1(\cN[g]). \]
\end{definition}

The existence of good universal mice follows as in \cite[Lemmas 2.15 and 2.16]{Sa17}, cf. also the proof of Lemma \ref{lem:TailSigmaiInM}.

\begin{lemma}\label{lem:gooduniversalmiceexist}
  Let $i<\omega$, suppose $(\mathsf{IH})_i$ and let $X \unrhd \cM^{(i)}|\delta_i$ be a premouse with $X \in \cW|\delta_{i+1}$. Fix some ordinal $\eta$ with $X \cap \Ord < \eta < \delta_{i+1}$ and let $\cQ = \cQ^{X,\eta}$ be the result of an $\cM$-construction above $X$ inside $\cW|\delta_{i+1}$ where the fully backgrounded part only uses extenders with critical point above $\eta$. Let $\Phi$ be the iteration strategy for $\cQ$ induced by the background strategy\footnote{That means, $\Phi$ only acts on the extenders on the sequence of $\cQ$ that are backgrounded.} and let $\cN$ be an iterate of $\cQ$ (above $X$) according to $\Phi$ such that the iteration embedding $i \colon \cQ \rightarrow \cN$ exists. Then $\cN$ is a $\kappa_i$-good universal mouse over $X$.
\end{lemma}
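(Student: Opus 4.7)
The plan is to mirror the proofs of \cite[Lemmas 1.14 and 1.15]{Sa17}, adapted to the present setting by using the techniques already developed in Lemmas \ref{lem:MinftyIteratesToConstructioni>1} and \ref{lem:TailSigmaiInM}. Since $\cQ$ (and hence every $\Phi$-iterate $\cN$) arises from a fully backgrounded construction (or an $\cM$-construction) inside $\cW|\delta_{i+1}$ using only extenders with critical point above $\eta$, the iteration strategy $\Phi$ lifts to the background strategy $\Sigma^\cW$ on $\cW$, which is $(\Ord,\Ord)$ by Definition \ref{def:translatablestructure}(4). This provides the $(\delta_{i+1},\delta_{i+1})$-iterability required of a universal mouse.

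First, I would verify that $\cN$ is a universal mouse over $X$ at $\kappa_i$. Standard universality of fully backgrounded constructions (\cite[Lemma 2.13]{Sa15} and \cite[Lemma 11.1]{St08dm}) gives the lower part condition $\cO_\zeta^\cN \unlhd Lp(\cN|\zeta)$ for each cutpoint $\zeta \in (X\cap\Ord,\delta_{i+1})$, and the stationary agreement of successors $(\xi^+)^\cN=(\xi^+)^\cW$ follows from the fact that the background extenders on $\cW$ preserve a club of cardinals below $\delta_{i+1}$ together with cofinality of the construction. These features are preserved under $\Phi$ since $\Phi$-iterations lift to trees on $\cW$ that, being above $X$, do not destroy the relevant cardinal structure.

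Next, the crucial identification is that $\cR^\cN$ is a $\Sigma_i$-iterate of $\cP_i$. Inside $\cN$ I would replay the proof of Lemma \ref{lem:quasidirectlimitkappai}: form the $\kappa_i$-suitable premice $\cN_{\delta^*}$ for $\kappa_i$-good ordinals $\delta^* < \kappa^\cN$ via the hod pair construction, observe that they form a correctly guided stack, and identify their quasi-limit with $\cM_\infty^\cN$. Comparing $\cP_i$ against the construction of each $\cN_{\delta^*}$, using Steel's iterating-into-a-backgrounded-strategy method from \cite{St16} exactly as in the proof of Lemma \ref{lem:MinftyIteratesToConstructioni>1}, $\cP_i$ iterates via $\Sigma_i$ onto $\cN_{\delta^*}$, and passing to the quasi-limit gives that $\cR^\cN$ is itself a $\Sigma_i$-iterate of $\cP_i$. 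Preservation under $\Phi$-iteration maps $i\colon\cQ\to\cN$ then follows by elementarity together with the fact that $\Sigma_i$ has hull condensation.

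Finally, for the definability clause $(\Sigma_i^g)_{\cR^\cN}\upharpoonright\cN[g]\in L[\cN[g]]$, I would appeal to the strategy-reading argument sketched in the proof of Lemma \ref{lem:TailSigmaiInM}: inside $\cN[g]$ the branches through trees on $\cR^\cN$ are determined by $\cQ$-structures, and those $\cQ$-structures are recovered by fully backgrounded constructions relative to tail strategies already present in $\cN[g]$ by the inductive construction of $\cR^\cN$. The main obstacle is the bookkeeping across the $i$ internal layers of the hod pair construction producing $\cR^\cN$: one must inductively certify at each level $k<i$ that the tail of $\Sigma_k$ acting on the $k$-th layer of the construction inside $\cN$ is itself definable from parameters in $\cN[g]$, so that when adding the $(k+1)$-st layer one is genuinely constructing relative to a strategy visible to $\cN$. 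Once this inductive setup is in place, the successor step follows exactly as in the proof of \cite[Theorem 6.5]{Sa15}, which I would cite to close the argument.
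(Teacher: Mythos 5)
Your proposal is correct and follows essentially the same route as the paper, which gives no written-out proof but simply points to \cite[Lemmas 1.14 and 1.15]{Sa17} together with the proof of Lemma \ref{lem:TailSigmaiInM}; your expansion fills in exactly the ingredients that citation presupposes (universality of the backgrounded construction, the comparison arguments of Lemmas \ref{lem:MinftyIteratesToConstructioni>1} and \ref{lem:quasidirectlimitkappai}, and the strategy-reading argument for the definability clause).
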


\begin{proof}
    $\cR^\cN$ is a $\Sigma_i$-iterate of $\cP^i$ and $\cN$ is $\kappa_i$-good, i.e., whenever $g$ is generic over $\cW$ for a partial order in $\cN$, $(\Sigma^g_i)_{\cR^\cN} \upharpoonright \cN[g] \in L_1(\cN[g])$, by the argument for Lemma \ref{lem:TailSigmaiInM}. So we only sketch the argument that $\cN$ is universal and show that for a stationary set of $\xi < \delta_{i+1}$, $(\xi^+)^\cN = (\xi^+)^\cW$. Let $\cW^*$ be the iterate of $\cW$ obtained by applying the $\cP^i$-to-$\cR^\cN$ iteration to $\cW$. Let $\cW^{**}$ be the result of a fully backgrounded construction relative to $\Sigma_{\cR^\cN}$ inside $\cN$. We can now apply universality of background constructions to the comparison of $\cW^*$ and $\cW^{**}$ in $\cW$ and finish the proof as in \cite[Lemma 2.16]{Sa17}.
\end{proof}

Now we can define operators $S_i$ for each $i>0$ analogous to Sargsyan's $S$-operator in \cite{Sa17} that goes back to Steel \cite{St08dm} and show that the operators are independent of the choice of the good universal mice. This yields another way of characterizing extenders and we will argue that it gives rise to the same extenders as the certification defined in Section \ref{sec:certifiedextenders}.

Let $i<\omega$ and let $\cQ_i$ be a $\kappa_i$-good universal mouse. Write $\pi^{\cQ_i} \colon \cP^i \rightarrow \cR^{\cQ_i}$ for the iteration embedding according to $\Sigma_i$. Let $T_i \subseteq \cP^i$ be the collection of all terms that $\Sigma_i$ guides correctly. Write $\pi^{\cQ_i}_\tau = \pi^{\cQ_i} \upharpoonright H_\tau^{\cP^i}$ for $\tau \in T_i$ and \[\cQ_i^+ = (\cQ_i | ((\kappa^{\cQ_i})^+)^{\cQ_i}, \in, \{\pi_\tau^{\cQ_i}\}_{\tau \in T_i}).\]
For any $X \in \cW | \delta_{i+1}$ and some $\kappa_i$-good universal mouse $\cQ_i$ over $X$ let \[ S_i^{\cQ_i}(X) = Hull_1^{\cQ_i^+}(X). \]
As in \cite[Lemma 2.19]{Sa17} the operator $S_i^{\cQ_i}$ does not depend on the choice of $\cQ_i$, so we can simplify the notation and write $S_i(X)$ for $S_i^{\cQ_i}(X)$ for some $\kappa_i$-good universal mouse $\cQ_i$ over $X$.

\begin{definition}
  Let $i<\omega$ and suppose $(\mathsf{IH})_i$. We say an $\cM^{(i)}|\delta_{i+1}$-extender $E$ with critical point $\kappa_i$ is \emph{characterized via $S_i$} iff\footnote{Note that $\cM^{(i)}|\xi = \Ult(\cM^{(i)},E)|\xi$.} \[ Hull_1^{S_i(\cM^{(i)}|\xi)}(\cM^{(i)}|\kappa_i) = S_i(\cM^{(i)}|\kappa_i) \] where $\xi \geq \kappa_i^+$ is the least $i$-weak cutpoint\footnote{Recall that an ordinal $\alpha$ is an $i$-weak cutpoint above $\kappa_i$ if all extenders overlapping $\alpha$ have critical point $\kappa_j$ for some $j<i$.} of $\Ult(\cM^{(i)},E)$ above $\kappa_i$, and if $\alpha$ is 
  the successor of $\xi$ in $\Ult(\cM^{(i)},E)$, then $E$ is the $(\kappa_i,\alpha)$-extender\footnote{Recall that by our indexing convention, $E$ with critical point $\kappa_i$ will be indexed at $\alpha$.} derived from the uncollapse map \[ k \colon S_i(\cM^{(i)}|\kappa_i) \rightarrow S_i(\cM^{(i)}|\xi). \] Write $\xi(E)$ for the ordinal $\xi$ defined above.
% In our case it should suffice to define this for $\cM$-extender. Grigor needs to define this more generally as at this point in his paper he has not defined the construction yet and he wants to use this definition in the definition of the construction.
\end{definition}

\begin{lemma}\label{lem:certifiedextendersaregivenbyoperators}
  Let $i< \omega$ and suppose $(\mathsf{IH})_i$. An $\cM^{(i)}|\delta_{i+1}$-extender $E$ with critical point $\kappa_i$ is certified iff it is characterized via the operator $S_i$.
\end{lemma}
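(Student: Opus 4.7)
The plan is to show that both characterizations reduce to the same statement about iteration maps on $\cM_\infty^{i,-}$, and to exploit the fact that the operator $S_i$ encodes precisely the iteration embeddings of $\cP_i$ arriving at the direct limit model, via the term capturing relations in $T_i$. First I will establish the link between $S_i$ and $\cM_\infty^{i,-}$: since a $\kappa_i$-good universal mouse $\cQ_i$ has the property that $\cR^{\cQ_i}$ is a $\Sigma_i$-iterate of $\cP_i$ and $\cM_\infty^{i,-}$ is the quasi-limit of such iterates (via Lemmas~\ref{lem:dlmequalkappai} and \ref{lem:quasidirectlimitkappai}), the embedding $\pi^{\cQ_i}\upharpoonright H_\tau^{\cP_i}$ encoded into $\cQ_i^+$ factors through $\cM_\infty^{i,-}$ for each $\tau \in T_i$. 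Consequently, $S_i(\cM|\kappa_i)$ locally captures $\cM_\infty^{i,-}$, and the ordinal $\xi(E)$ is exactly the image under $\pi_E$ of the corresponding cutpoint of $\cM_\infty^{i,-}$; this is where the modified indexing convention from the preliminaries plays its role.

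For the direction that certification implies characterization via $S_i$, I assume $E$ is certified, so $\pi_E \upharpoonright \cM_\infty^{i,-} = \pi^{\Sigma^{i,\infty}}_{\cM_\infty^{i,-},\pi_E(\cM_\infty^{i,-})}$. Because this restriction is an iteration embedding by $\Sigma^{i,\infty}$ and because $S_i$ is defined via a $\kappa_i$-good universal mouse whose iteration strategy is compatible with $\Sigma_i$, the map $\pi_E$ lifts to an embedding of $S_i(\cM|\kappa_i)$ into $S_i(\cM|\xi(E))$ that fixes $\cM|\kappa_i$ elementwise and agrees with $\pi_E$ on the relevant term relations. By the term-capturing properties of $T_i$ (and the fact that the hull $Hull_1^{\cQ_i^+}(\cM|\kappa_i)$ is generated by $\cM|\kappa_i$ together with these $\pi^{\cQ_i}_\tau$), this lift must coincide with the uncollapse map $k \colon S_i(\cM|\kappa_i) \to S_i(\cM|\xi(E))$, and $Hull_1^{S_i(\cM|\xi(E))}(\cM|\kappa_i) = S_i(\cM|\kappa_i)$ follows. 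Hence the $(\kappa_i,\alpha)$-extender derived from $k$ agrees with $E$ on all finite-support functions in $\cM|\kappa_i$, so it equals $E$.

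For the converse, assume $E$ is characterized via $S_i$. The uncollapse map $k \colon S_i(\cM|\kappa_i) \to S_i(\cM|\xi(E))$ restricts to an embedding $k \upharpoonright \cM_\infty^{i,-}$ because $\cM_\infty^{i,-}$ is definable in $S_i(\cM|\kappa_i)$ from the data of $\cQ_i^+$. Using that $S_i$ is independent of the choice of $\kappa_i$-good universal mouse (the analogue of \cite[Lemma 1.18]{Sa17}) and that iteration maps by $\Sigma_i$ respect the $\pi^{\cQ_i}_\tau$ uniformly, this restriction must itself be an iteration map of $\cM_\infty^{i,-}$ according to $\Sigma^{i,\infty}$ — any deviation would produce two distinct $\vec\tau$-correct branches through the comparison tree, contradicting strong guidance by $\vec\tau$. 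Since $E$ is the extender derived from $k$, we conclude $\pi_E \upharpoonright \cM_\infty^{i,-} = k \upharpoonright \cM_\infty^{i,-} = \pi^{\Sigma^{i,\infty}}_{\cM_\infty^{i,-},\pi_E(\cM_\infty^{i,-})}$, so $E$ is certified.

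The main obstacle will be verifying rigorously that the hull $S_i(\cM|\kappa_i)$ genuinely ``sees'' $\cM_\infty^{i,-}$ together with enough of the strategy $\Sigma^{i,\infty}$ to pin down iteration embeddings uniquely — in other words, showing that the term-capturing data encoded in the $\pi^{\cQ_i}_\tau$ determines $\Sigma^{i,\infty}$-branches. This is what makes the $S_i$ characterization equivalent to the semantic certification condition, and it is where strong guidance of $\Sigma_i$ by $\vec\tau$, together with the invariance of $S_i$ under the choice of good universal mouse, must be combined.
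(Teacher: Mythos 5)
Your proposal is correct and follows essentially the same route as the paper: the forward direction is the observation that an extender agreeing with $\pi^{\Sigma^{i,\infty}}_{\cM_\infty^{i,-},\pi_E(\cM_\infty^{i,-})}$ moves the term relations correctly and hence coincides with the extender derived from the hull embedding $k$, while the converse rests on the fact that the good universal mice over $\cM|\kappa_i$ and over $\cM|\xi(E)$ have direct limits that are $\Sigma_i$-iterates of $\cP_i$ related by the tail strategy, with $\cM|\delta_{i+1}$ chosen as the good universal mouse whose direct limit is $\cM_\infty^i$. Your appeal to strong guidance by $\vec\tau$ to rule out a deviating branch is exactly the mechanism the paper is implicitly invoking when it says the $(\Sigma_i)_{\cR^\cQ}$-correctness of the iteration from $\cR^\cQ$ to $\cR^{\cQ^*}$ ``suffices,'' so the two arguments match in substance and in level of detail.
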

\begin{proof}
  The forward direction, i.e., that any certified extender with critical point $\kappa_i$ is characterized via the operator $S_i$, follows from the fact that if an extender $E$ is according to $(\Sigma_i)_{\cM_\infty^{i,-}}$ it clearly moves term relations correctly.

  For the other direction, suppose $E$ is the extender derived from $k \colon S_i(\cM^{(i)}|\kappa_i) \rightarrow S_i(\cM^{(i)}|\xi(E))$. Let $\cQ$ be a $\kappa_i$-good universal mouse over $\cM^{(i)}|\kappa_i$ and let $\cQ^*$ be a $\kappa_i$-good universal mouse over $\cM^{(i)}|\xi(E)$. Consider the corresponding direct limits $\cR^\cQ$ and $\cR^{\cQ^*}$ of suitable premice. Both, $\cR^\cQ$ and $\cR^{\cQ^*}$, are $\Sigma_i$-iterates of $\cP^i$. Moreover,
    $\cR^\cQ$ is a point in the direct limit system giving rise to $\cR^{\cQ^*}$. %(And both are points in the direct limit system giving rise to $\cM_\infty$ as both are $\Sigma$-iterates of $\cP$.)
  In fact, $\cR^{\cQ^*}$ is a $(\Sigma_i)_{\cR^\cQ}$-iterate of $\cR^\cQ$. Note that $\cM^{(i)}|\delta_{i+1}$ is a $\kappa_i$-good universal mouse over $\cM^{(i)}|\kappa_i$ with $\cR^{\cM^{(i)}|\delta_{i+1}} = \cP_\infty^i$. So we can pick $\cQ = \cM^{(i)}|\delta_{i+1}$ and the iteration from $\cR^\cQ = \cP_\infty^i$ to $\cR^{\cQ^*}$ is according to $(\Sigma_i)_{\cR^\cQ}$. Therefore, we obtain $\pi_E \upharpoonright \cP_\infty^{i,-} = \pi^{\Sigma^{i,\infty}}_{\cP_\infty^{i,-}, \pi_E(\cP_\infty^{i,-})}$, as desired.
  %OLD: We can use this claim to argue that the iteration from $\cR^\cQ$ to $\cR^{\cQ^*}$ is according to $\Sigma$. $\cR^\cQ$ is a direct limit computed in the derived model of $\cQ[g]$, where $g$ Lévy-collapses $\kappa^\cQ$, the least strong up to $\delta_1$ in $\cQ$. So arguments as in the usual $\HOD$-computations show that $\delta^{\cR^\cQ}$, the unique Woodin cardinal in $\cR^\cQ$, is {\color{red} This probably holds as in the Varsovian models paper: equal to $((\kappa^\cQ)^+)^\cQ$} (which should be $\Theta$ of the derived model). If we find a $\cQ$ such that $\kappa^\cQ = \kappa_0$ and maybe also $((\kappa^\cQ)^+)^\cQ = (\kappa_0^+)^\cM$ it might suffice to characterize what $F$ does on $\cR^\cQ$ (instead of $\cM_\infty$).
\end{proof}

%\expl{Grigor is using this definability in the proof that $\kappa$ is strong all the way to argue that the extender overlapping the ordinal in question on the original model ($\cS^*$, an iterate of $\cS \unlhd \cN$) is actually in the iterate $H^*$ of the background model $H$ and hence can get added by the construction to derive a contradiction. In our case we also need to argue that after comparing the models with the $\cM$-side not moving, the generically countably complete extender on the $\cM$-side which overlaps the ordinal in question gets added when constructing the iterate. We need a similar thing: We compare the models, so we show that an iterate $\cM^*$ of $\bar\cM$ (the collapsed countable hull of $\cM$) is an initial segment of $\cM$ (as we show that $\cM$ does not move in the comparison). It is not clear that $\cM^*$ needs to have the overlapping extender since $\cM$ has it, just by plainly being an initial segment (as it might not be high enough). This looks like we do need this definability for our argument and so we need the operators.}

%\subsection{Properties of the operators}

An important advantage of the characterization of extenders via operators is that it can be used to prove that the extenders that can be characterized this way are easily definable. This is made precise in the following lemma (cf. \cite[Lemma 2.24 and Lemma 2.25]{Sa17}). 

\begin{lemma}\label{lem:extenderdefinable}
  Let $i<\omega$ and suppose $(\mathsf{IH})_i$. Let $\xi$ be a cardinal in $\cM^{(i)}$ with $(\kappa_i^+)^{\cM^{(i)}} < \xi < \delta_{i+1}$. Then \[ S_i(\cM^{(i)}|\xi) \in M_1^{\#,\Sigma_i}(\cM^{(i)}|\xi). \]
  Moreover, if $E$ is an $\cM^{(i)}|\delta_{i+1}$-extender with critical point $\kappa_i$ that is characterized via the operator $S_i$, then \[ E \in M_1^{\#,\Sigma_i}(\Ult(\cM^{(i)},E)|\zeta), \] where $\zeta$ is the least $i$-weak cutpoint above $\kappa_i$ in $\Ult(\cM^{(i)},E)$.
\end{lemma}

The proof of Lemma \ref{lem:extenderdefinable} is analogous to the proofs of Lemmas 2.24 and 2.25 in \cite{Sa17}, so we leave it to the reader. The idea of proof is that, considering $\Ult(\cM^{(i)},E)$ for $E$ an extender on the sequence of $\cM^{(i)}$ such that $\xi$ is a cutpoint in $\Ult(\cM^{(i)},E)$, we can make $\cM^{(i)}|\xi$ generic for the extender algebra of $\cM'$ for some iteration $i \colon \cM^{(i)} \rightarrow \cM'$. Using $\cP$-constructions we can conclude that $S_i(\cM^{(i)}|\xi) \in M_1^{\#,\Sigma_i}(\cM^{(i)}|\xi).$ Note that $S_i(\cM^{(i)}|\kappa_i) = Hull_1^{S_i(\Ult(\cM^{(i)},E)|\zeta)}(\cM^{(i)}|\kappa_i)$ and let $\sigma \colon S_i(\cM^{(i)}|\kappa_i) \rightarrow S_i(\Ult(\cM^{(i)},E)|\zeta)$ be the uncollapse map. The argument for the first part of Lemma \ref{lem:extenderdefinable} sketched above also shows $\sigma \in M_1^{\#,\Sigma_i}(\Ult(\cM^{(i)},E)|\zeta)$. This yields $E \in M_1^{\#,\Sigma_i}(\Ult(\cM^{(i)},E)|\zeta)$ as $E$ can be derived from $\sigma$.

\section{Proving that it works}

\subsection{Iterability} \label{subsec:iterability}

We now prove that hulls of $\cM$ are iterable. The argument will combine the standard lifting to the background model for fully backgrounded constructions from \cite{MS94} with the almost linearity of the iteration tree when considering extenders with critical point (the image of) some $\kappa_i$, $i<\omega$, and index above (the image of) $\delta_{i+1}$ (cf. \cite{Sch02}).

Our concept of an almost linear iteration tree is a generalization of the one in \cite{Sch02} as we distinguish two different types of extenders -- with and without a background -- and allow arbitrary iteration trees for backgrounded extenders. We make this more precise now and use the notation from \cite{St10} for iteration trees $\cT$ and the associated tree order $T$. Lemma \ref{lem:iterationtreealmostlinear} will show that in our setting all normal iteration trees are almost linear on strongness witnesses.

\begin{definition}
  Let $\cN$ be the result of an $\cM$-construction in some translatable structure or an iterate of the result of such a construction. Write $(\kappa_i^\cN, \delta_i^\cN \mid i < \omega)$ for the sequence of the specific cardinals $\kappa_i$ and $\delta_i$, $i<\omega$, chosen in the definition of the $\cM$-construction. 
  \begin{enumerate}
      \item An extender $E$ on the $\cN$-sequence is called \emph{strongness witness} or \emph{non-backgrounded} iff the critical point of $E$ is $\kappa_i^\cN$ for some $i<\omega$ and the length of $E$ is above $\delta_{i+1}^\cN$. % The index cannot be equal to $\delta_{i+1}^\cN$ as extenders are never indexed at cardinals.
      Otherwise we say $E$ is \emph{backgrounded}.
      \item An iteration tree $\cT$ on $\cN$ is called \emph{almost linear on strongness witnesses} iff 
    \begin{enumerate}
          \item for all $\alpha+1 < \lh(\cT)$ such that $E_\alpha^\cT$ is a strongness witness we have $\pred_T^*(\alpha+1) \in [0,\alpha]_T$, 
          where $\pred_T^*(\alpha+1)$ denotes the unique ordinal $\beta+1$ such that $\cM_{\beta+1}^\cT$ is obtained as an ultrapower by a strongness witness $E_\beta^\cT$ and $i_{\beta+1,\pred_T(\alpha+1)} \colon \cM_\beta^{*,\cT} \rightarrow \cM_{\pred_T(\alpha+1)}^\cT$ is an iteration via backgrounded extenders,
          and  % Recall: $E_\alpha^\cT$ gets applied to the model indexed at $\cT-\pred(\alpha+1)$ and this results in the model $\cM_{\alpha+1}^\cT$.
          \item any $\alpha<\lh(\cT)$ has only finitely many $T$-successors $\beta+1$ such that $E_\beta^\cT$ %(this is the extender that gets applied to $\cM_\alpha^\cT$)
          is a strongness witness.
    \end{enumerate}
   % \later{Do we need to allow a bit more branching? When we go back to an earlier model, we might go back to model that is in the fully backgrounded part of the tree on a different branch.}
  \end{enumerate}
\end{definition}

The following lemma uses similar ideas as the proof of \cite[Lemma 2.4]{Sch02}.

\begin{lemma}\label{lem:iterationtreealmostlinear}
  Let $\cN$ be the result of an $\cM$-construction in some translatable structure. Then any normal iteration tree $\cT$ on $\cN$ is almost linear on strongness witnesses.
\end{lemma}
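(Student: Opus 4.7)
The plan is to argue by induction on $\lh(\cT)$, adapting the argument of \cite[Lemma 2.4]{Sch02} to the present setting, where two kinds of extenders co-exist on the sequence of $\cN$: the backgrounded extenders coming from the fully backgrounded part of the $\cM$-construction (clause~\eqref{item:backgroundext} of Definition~\ref{def:translationprocedure}), and the strongness witnesses added in clause~\eqref{item:strongext} with critical point some $\kappa_i^\cN$ and index above $\delta_{i+1}$ in their own ultrapower. The special indexing of strongness witnesses, namely at the successor of the least $i$-weak cutpoint above $\crit(E)^+$ in $\Ult(\cM,E)$, is the lever: it forces $\pi_E(\kappa_i) > \delta_{i+1}^{\Ult}$, while backgrounded extenders cannot ``undo'' the movement of the $\kappa_j$-sequence at or below the level where they are applied.

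For condition (a), let $E = E_\alpha^\cT$ be a strongness witness with $\crit(E) = \kappa = \kappa_i^{\cM_\alpha^\cT}$ and set $\alpha^\ast = \pred_T(\alpha+1)$. The first task is to show $\alpha^\ast \leq_T \alpha$. Using the standard agreement of normal iteration trees, the sequence $(\kappa_j^{\cM_\gamma^\cT} \mid j \leq i)$ is determined by the common portion of $\cM_\gamma^\cT$ and $\cM_\alpha^\cT$ below $\hat\lambda(E_\alpha^\cT)$; tracking how this sequence is (not) moved by successive ultrapowers along the relevant part of $\cT$ forces $\alpha^\ast$ onto the branch $[0,\alpha]_T$. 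Once $\alpha^\ast \leq_T \alpha$ is secured, I would walk backward along $[0,\alpha^\ast]_T$ to the largest $\beta+1 \leq_T \alpha^\ast$ at which a strongness witness was applied; by maximality, every extender on the segment $(\beta+1,\alpha^\ast]_T$ is backgrounded, so the composition $i_{\beta+1,\alpha^\ast}$ is an iteration by backgrounded extenders, giving $\pred_T^\ast(\alpha+1) = \beta+1 \in [0,\alpha]_T$ as required.

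For condition (b), suppose toward a contradiction that $\{\beta_n+1 \mid n<\omega\}$ is an infinite family of $T$-successors of $\alpha$ with each $E_{\beta_n}^\cT$ a strongness witness. By normality of $\cT$ on Jensen-indexed premice, the critical points $\crit(E_{\beta_n}^\cT)$ form a strictly increasing sequence; by agreement between $\cM_{\beta_n}^\cT$ and $\cM_\alpha^\cT$ below $\hat\lambda(E_\alpha^\cT)$, each $\crit(E_{\beta_n}^\cT)$ is some $\kappa_{i_n}^{\cM_\alpha^\cT}$, and hence the sequence of indices $i_n$ strictly increases. I would then argue that the indexing conventions pin $\hat\lambda(E_\alpha^\cT)$ below $\kappa_{i^\ast+1}^{\cM_\alpha^\cT}$ for a specific $i^\ast$ depending on the type and critical point of $E_\alpha^\cT$: if $E_\alpha^\cT$ is backgrounded with $\crit(E_\alpha^\cT) = \mu$, then $\lh(E_\alpha^\cT) = (\mu^+)^{\Ult}$ sits below the next $\kappa_j$; if $E_\alpha^\cT$ is itself a strongness witness with $\crit = \kappa_j$, then its index at the successor of a weak cutpoint still lies below $\kappa_{j+1}^{\cM_\alpha^\cT}$ by choice of $\kappa_{j+1}$ as the \emph{next} $<\delta_{j+2}$-strong cardinal. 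Either way only finitely many $\kappa_i^{\cM_\alpha^\cT}$ fit below $\hat\lambda(E_\alpha^\cT)$, yielding the desired contradiction.

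The main obstacle is the book-keeping for part~(b): verifying rigorously that the unusual indexing for strongness witnesses really does confine $\hat\lambda(E_\alpha^\cT)$ below some $\kappa_{i^\ast+1}$ of $\cM_\alpha^\cT$, in each subcase of the critical point of $E_\alpha^\cT$, and propagating this through agreement when $\alpha$ itself sits deep in the tree. Schindler's argument in \cite{Sch02} handles a single special cardinal; here one must thread the argument across the entire $\omega$-sequence $(\kappa_i^\cN)_{i<\omega}$ and interlace it with the possibility of backgrounded extender applications that fix most of this sequence but can nonetheless shift a bounded initial segment. The case analysis should carry through cleanly once the right agreement statement between $\cM_\gamma^\cT$ and $\cM_\alpha^\cT$ on $(\kappa_j)_{j\leq i}$ is isolated.
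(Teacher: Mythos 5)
Your proposal for condition (b) rests on the assertion that, by normality, the critical points $\crit(E_{\beta_n}^\cT)$ of the strongness witnesses applied to a fixed node form a strictly \emph{increasing} sequence. Normality orders the \emph{indices} of the extenders used in $\cT$, not their critical points: all extenders applied to $\cM_\alpha^\cT$ merely have critical points lying in the same window determined by $\alpha$, and within that window no monotonicity is automatic. In fact the statement one needs (and the one that is true here) is the opposite: the critical points must be strictly \emph{decreasing}, and finiteness in (b) then follows from well-foundedness of the ordinals, not from a bound on how many $\kappa_i$ fit below $\hat\lambda(E_\alpha^\cT)$. That proposed bound is itself false: a strongness witness with critical point $\kappa_j$ is by definition indexed above $\delta_{j+1}$, but its index can lie above arbitrarily many (indeed all) of the $\kappa_l$ --- this is unavoidable, since $\kappa_j$ is supposed to end up fully strong in $\cM$ --- so $\hat\lambda(E_\alpha^\cT)$ need not sit below $\kappa_{j+1}^{\cM_\alpha^\cT}$.

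The missing engine is the analogue of Claim 1 in \cite[Lemma 2.4]{Sch02}: if a strongness witness $E = E_i^\cU$ with critical point $\kappa_E$ and length $\lambda_E$ is applied along the branch leading to $\cM_\lambda^\cU$, then no strongness witness $F$ on the sequence of $\cM_\lambda^\cU$ satisfies $\kappa_E \leq \kappa_F < \lambda_E \leq \lambda_F$. The proof is fine-structural rather than combinatorial: one shows $\rho_\omega(J_{\lambda_F}^{\cM_\lambda^\cU}) \geq \lambda_E$, so that $F \upharpoonright \xi \in J_{\lambda_F}^{\cM_\lambda^\cU}$ for all $\kappa_F < \xi < \lambda_E$ and hence $\kappa_F$ is $\lambda_E$-strong in $\cM_\lambda^\cU$, and therefore in $\cM_{i+1}^\cU = \Ult(\cM_i^{*,\cU},E)$; this contradicts coherence of the ultrapower by $E$, which admits no extender with critical point in $[\kappa_E,\lambda_E)$ overlapping $\lambda_E$. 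Applied to two strongness witnesses $E_j^\cU$ and $E_\lambda^\cU$ with the same $T$-predecessor, the claim forces $\crit(E_j^\cU) > \crit(E_\lambda^\cU)$, which yields both (a) and (b). Your sketch for (a) (``tracking how the $\kappa_j$-sequence is moved'') also needs this claim to get off the ground; without it there is no mechanism preventing $\pred_T^*(\alpha+1)$ from falling off the branch $[0,\alpha]_T$.
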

\begin{proof}
    Analogous to the proof of \cite[Lemma 2.4]{Sch02}, we start with the following claim.
    
    \begin{claim}
      Let $\cU$ be a normal iteration tree on $\cN$ of successor length $\lambda+1$ and let $i+1 \in (0,\lambda]_U$ be such that $E = E_i^\cU$ is a strongness witness with critical point $\kappa_E$ and length $\lambda_E$. Then there is no strongness witness $F$ on the sequence of $\cM_\lambda^\cU$ with critical point $\kappa_F$ and length $\lambda_F$ such that \[ \kappa_E \leq \kappa_F < \lambda_E \leq \lambda_F. \] 
    \end{claim}
    \begin{proof}
    Suppose the claim does not hold and let $F$ be a strongness witness on the sequence of $\cM_\lambda^\cU$ with $\kappa_E \leq \kappa_F < \lambda_E \leq \lambda_F$.  
    %  Let $J_\eta^{\cM_{i^*}^\cT}$ be the model $E$ gets applied to in $\cT$ and let \[ \pi^\cT_{i^*i+1} \colon J_\eta^{\cM_{i^*}^\cT} \rightarrow \cM_{i+1}^\cT \] be the corresponding ultrapower embedding. Suppose there is an extender $F$ as in the statement of the claim. 
    Then it is not hard to see that $\rho_\omega(J_{\lambda_F}^{\cM_\lambda^\cU}) \geq \lambda_E$, cf. the subclaim in the proof of \cite[Lemma 2.4, Claim 1]{Sch02}.
    %  \later{Here we are using that $\lambda_F$ is also the index of $F$ in $\cM_\lambda^\cT$. Is this actually correct in Jensen indexing (index = length = successor of the image of the critical point)?}
      % CHECK: What indexing is Ralf using in his paper? He defines $\lambda = E(\kappa)$.
      This implies that $F|\xi \in J_{\lambda_F}^{\cM_\lambda^\cU}$ for all $\kappa_F < \xi < \lambda_E$. In particular, $\kappa_F$ is $\lambda_E$-strong in $\cM_\lambda^\cU$. 
      % This is witnessed by the direct limit of the embeddings given by F|\xi. For this we need that their sequence is also in the model.
      Therefore, $\kappa_F$ is $\lambda_E$-strong in $\cM_{i+1}^\cU$.
    This yields a contradiction as $\cM_{i+1}^\cU$ is an ultrapower by $E$, so there is no extender with critical point between $\kappa_E$ and $\lambda_E$ that overlaps $\lambda_E$ in $\cM_{i+1}^\cU$.
    \end{proof}
    
    The claim implies that for every normal iteration tree $\cU$ on $\cN$ of length $\lambda+2$ and every $i<j+1<\lambda+1$ with $j+1 \in (i,\lambda]_U$ and $i = U-\pred(j+1) = U-\pred(\lambda+1)$ such that $E_j^\cU$ and $E_\lambda^\cU$ are both strongness witnesses \[ \crit(E_j^\cU) > \crit(E_\lambda^\cU). \] Using this observation, a short argument as in the proof of \cite[Lemma 2.4]{Sch02} yields that any normal iteration tree $\cT$ on $\cN$ is almost linear on strongness witnesses.
\end{proof}

%\expl{It suffices to prove that we have enough iterability for the comparison argument against the construction. But we might as well prove that all hulls are iterable (i.e., the translated structure is countably iterable or something like that).}

% To show iterability we argue that we always have realization maps (in small generic extensions). There are many different cases depending on the properties of the extender we are using. Sometimes we want to take several hulls ``iteratively'' and get the realization map by composing the hull embeddings. When applying an overlapping extender back at the original model, We argue directly that we can define the realization map $\tau$ in a small generic extension collapsing the size of the extender $E$ using absoluteness of well-foundedness.

The next lemma proves $(\mathsf{IH}.1)_{i+1}$ from $(\mathsf{IH})_{i}$.

\begin{lemma}\label{lem:iterability}
  Let $i < \omega$, suppose $(\mathsf{IH})_{i}$ and let \[\pi \colon \bar\cW \rightarrow \cW|\Omega \] for some sufficiently large\footnote{Note that we have not shown at this point that the construction of $\cM^{(i+1)}$ converges. In case $\cM^{(i+1)} \cap \Ord < \Ord$ pick $\Omega > \cM^{(i+1)} \cap \Ord$ and let $\cM^{(i+1)}|\Omega = \cM^{(i+1)}$.} $\Omega$ be such that letting $\crit(\pi) = \nu$, $\cW|\kappa_i \subseteq \bar\cW$, $(\kappa_j, \delta_j \mid j < \omega) \in \rng(\pi)$, the sequence of models of the construction of $\cM^{(i+1)}|\Omega$ is in the range of $\pi$, $\nu \in (\kappa_i, \delta_{i+1})$ is an inaccessible cardinal in $\cW$, and $\pi(\nu) = \delta_{i+1}$. Let $\bar\cM$ be the collapse of $\cM^{(i+1)}|\Omega$.
  Then $\bar\cM$ is $(\delta_{i+1},\delta_{i+1})$-iterable with respect to extenders with critical point above $\kappa_i$.
\end{lemma}

\begin{proof}
  Let $i<\omega$ and $\pi \colon \bar\cW \rightarrow \cW|\Omega$ be as in the statement of the lemma. Let $(\bar\kappa_j, \bar\delta_j \mid j<\omega)$ be the preimages of $(\kappa_j, \delta_j \mid j < \omega)$ under $\pi$ (so, in particular, $\bar\kappa_j = \kappa_j$ and $\bar\delta_j = \delta_j$ for $j \leq i$). 
  For notational simplicity we write $\cM$ and $\cW$ instead of $\cM^{(i+1)}|\Omega$ and $\cW|\Omega$ as it will be clear from the context if we refer to the proper class model or the restriction to $\Omega$ and suppressing the superscript ${(i+1)}$ simplifies the notation. We recursively describe an iteration strategy $\Lambda$ for $\bar\cM$ acting on iteration trees $\cT$ of length ${<}\delta_{i+1}$ which use extenders with critical point above $\kappa_i$. The recursive construction will depend on what type of extenders get used -- backgrounded extenders or non-backgrounded strongness witnesses. For notational simplicity, we describe the iteration strategy $\Lambda$ only for normal trees $\cT$ and leave it to the reader to generalize the construction to stacks of normal trees. Simultaneously to the construction of $\Lambda$, we define a \emph{realization of $\cT$ into a backgrounded iteration of $\cM$}. That means, for every normal iteration tree $\cT$ on $\bar\cM$ we recursively define a stack of iteration trees $\vec\cU = (\cU_\alpha \mid \alpha < \gamma)$ on $\cM$ and an embedding \[ k \colon \lh(\cT) \rightarrow \gamma \times \bigcup_{\alpha < \gamma}\lh(\cU_\alpha) \] such that $\vec\cU$ only uses fully backgrounded extenders in $\cM$ and its iterates and for every ordinal $\xi < \lh(\cT)$ if $k(\xi) = (\alpha, \eta)$ there is an elementary embedding \[ \iota_\xi \colon \cM_\xi^\cT \rightarrow \cM_{\eta}^{\cU_\alpha}. \] Here, $\pi \upharpoonright \bar\cM = \iota_0 \colon \bar\cM \rightarrow \cM$ and the other embeddings $\iota_\xi$ for $\xi > 0$ will be obtained as copy or realization maps. Note that for example in the special case that $\cT$ only uses non-backgrounded extenders, $\gamma = 1$, $\lh(\cU_0) = 1$ and all $\iota_\xi$ for $0<\xi<\lh(\cT)$ are realization maps of $\cM_\xi^\cT$ into $\cM = \cM_0^{\cU_0}$.
  
  %Moreover, $k(\gamma) = \gamma$ for every limit ordinal $\gamma < \lh(\cT)$. \prpl{Check if this is also defined in case $\cT$ uses cofinally many overlapping extenders. At least $k(\gamma) = \gamma$ is not true for example if $\cT$ only uses non-backgrounded extenders. In this case $\cU$ is empty.}

  Now we turn to the recursive definition of $\Lambda$ together with realizations into a backgrounded iteration of $\cM$. Let $\cT$ be a normal iteration tree of limit length $\lambda < \delta_{i+1}$ on $\bar\cM$ according to $\Lambda$ and recall that $\cT$ is almost linear on strongness witnesses by Lemma \ref{lem:iterationtreealmostlinear}. We distinguish the following cases.

  \begin{case}
    $\cT$ only uses fully backgrounded extenders.
  \end{case}
  
  The cofinal well-founded branches through $\cT \upharpoonright \eta$ for limit ordinals $\eta \leq \lambda$ that get picked by $\Lambda$ are given by the usual strategy of lifting the iteration tree $\cT$ to a tree on the background model $\bar\cW$ via the backgrounds of the extenders used in $\cT$ and choosing the cofinal branches according to the pullback strategy for $\bar\cW$ of the iteration strategy of $\cW$. This yields canonical cofinal well-founded branches through $\cT \upharpoonright \eta$ for all limit ordinals $\eta \leq \lambda$. Let $\iota_\xi$ for $\xi < \lh(\cT)$ be the copy maps according to $\pi \upharpoonright \bar\cM$ and let $\cU_0 = \pi^\cT$ be the corresponding copy tree. 

  \begin{case}\label{case:iterabilityoneextender}
    $\cT$ uses a single non-backgrounded extender $E$ in addition to fully backgrounded extenders.
  \end{case}
  
  The iteration tree $\cT$ naturally splits into an iteration tree $\cT_1$ before applying $E$ and an iteration tree $\cT_2$ after applying $E$ as backgrounded extenders after $E$ cannot be applied to a model before the application of $E$, i.e., a model in $\cT_1$, according to the rules of a normal iteration tree. Let $\bar\cM^1 = \cM_\eta^{\cT_1}$ denote the model in $\cT_1$ to which $E$ gets applied and write $\cT_1 \upharpoonright \bar\cM^1$ for the tree $\cT_1 \upharpoonright (\eta+1)$ with last model $\bar\cM^1$. Moreover, let $\iota_0\cT_1$ be the copy tree and $\iota_\eta$ the copy map resulting from copying $\cT_1$ up to $\bar\cM^1$ onto $\cM$, cf. Figure \ref{fig:RealizationSingleNonBackgroundedExtender} below.
  
  \begin{claim}\label{cl:realizationtau}
    There is a realization map $\tau \colon \Ult(\bar\cM^1,E) \rightarrow \cM^1$.
  \end{claim}
  \begin{proof}
  Let $\iota_\xi \colon \bar\cN \rightarrow \cN$ be the copy map resulting from copying $\cT_1$ onto $\cM$ via $\iota_0$. Let $\cW^1$ and $\cW_\cN$ be the results of lifting the iteration from $\cM$ to $\cM^1$ as well as the iteration from $\cM^1$ to $\cN$ onto an iteration of the background model $\cW$. 
  
      \begin{figure}[htb]
      \begin{tikzpicture}[decoration=snake]
       
        \draw[->] (2.65,-2.45) node[below=0.1cm] {$\bar\cM^1$} -- node[left] {$\iota_\eta$} (2.65,-0.25) node[above]
        {$\cM^1$};

        \draw[->,decorate] (3.1,-2.75) node[left] {}-- node[below=0.1cm] {} (6.6,-2.75) node[right] {$\bar\cN \ni E$};
        
        \draw[->,decorate] (3.1,-3.75) node[left] {$\bar\cW^1$}-- node[below=0.1cm] {} (6.6,-3.75) node[right] {$\bar\cW_\cN$};
        
        \draw[->,decorate] (3.1,-0.1) node[left] {}-- node[above=0.1cm] {} (6.6,-0.1) node[right] {$\cN$};
        
        \draw[->,decorate] (3.1,0.9) node[left] {$\cW^1$}-- node[above=0.1cm] {} (6.6,0.9) node[right] {$\cW_\cN$};
        
        \draw[->] (2.8,-2.5) node[left] {}-- node[right] {$\;\;\pi_E$} (4,-1.4) node[right]
        {$\Ult(\bar\cM^1,E)$}; 

        \draw[->,dashed] (4,-1.3) node[left] {}-- node[right] {$\;\tau$} (2.8,-0.25) node[right] {};
        
        \draw[->] (6.85,-2.45) node[below=0.1cm] {} -- node[right] {$\iota_\xi$} (6.85,-0.3) node[above] {};
        
      \end{tikzpicture}
      \caption{The realization map $\tau$.}\label{fig:RealizationSingleNonBackgroundedExtenderClaim}
    \end{figure}
  
  As $E$ is non-backgrounded with critical point above $\kappa_i$, $\iota_\xi(E)$ is generically countably complete in $\cW_\cN$ and has critical point at least $\kappa_{i+1}$. Recall that $|\bar\cN| < \delta_{i+1} < \kappa_{i+1}$ as $\lambda = \lh(\cT) < \delta_{i+1}$.
  Therefore, $\iota_\xi(E)$ is countably complete in $\cW_\cN[G]$, where $G$ is $\Col(\omega,\delta_{i+1})$-generic over $\cW_\cN$ and the length $\lambda_E$ of $E$ is countable in $\cW_\cN[G]$. So if we write $\kappa = \crit(\iota_\xi(E))$ we can fix a function
  \[ \rho \colon \iota_\xi \pwimg [\lambda_E]^{{<}\omega} \rightarrow \kappa \] such that for each $a \in [\lambda_E]^{{<}\omega}$ and $A \subseteq [\crit(E)]^{|a|}$, \[ A \in E_a \Leftrightarrow \iota_\xi(A) \in \iota_\xi(E)_{\iota_\xi(a)} \Leftrightarrow \rho \pwimg \iota_\xi(a) \in \iota_\xi(A). \] 
  Here, note that $\iota_\xi(A) \in \iota_\xi(E)_{\iota_\xi(a)} \Rightarrow \rho \pwimg \iota_\xi(a) \in \iota_\xi(A)$ holds by definition of countable completeness, cf. Definition \ref{def:genctblecompleteness}, and $\iota_\xi(A) \in \iota_\xi(E)_{\iota_\xi(a)} \Leftarrow \rho \pwimg \iota_\xi(a) \in \iota_\xi(A)$ follows easily as $\iota_\xi(A) \notin \iota_\xi(E)_{\iota_\xi(a)} \Leftrightarrow [\kappa]^{|a|} \setminus \iota_\xi(A) \in \iota_\xi(E)_{\iota_\xi(a)} \Rightarrow \rho \pwimg \iota_\xi(a) \in [\kappa]^{|a|} \setminus \iota_\xi(A) \Leftrightarrow
  \rho \pwimg \iota_\xi(a) \notin \iota_\xi(A).$
  % a is finite, so |a| = |\iota_\xi(a)|.
  
    Let $\tau \colon \Ult(\bar\cM^1,E) \rightarrow \cM^1$ be given by \[ [a,f] \mapsto \iota_\eta(f) (\rho \pwimg \iota_\xi(a)), \] for $a \in [\lambda_E]^{{<}\omega}$ and $f \in \bar\cM^1$, cf. Figure \ref{fig:RealizationSingleNonBackgroundedExtenderClaim}. Note that $\tau$ is well-defined, i.e., does not depend on the choice of $a$ and $f$ representing $[a,f]$. As $\bar\cM^1$ and $\bar\cN$ agree on their subsets of $\crit(E)$ as well as $\cM^1$ and $\cN$ agree on their subsets of $\kappa = \crit(\iota_\xi(E))$, we have $\iota_\xi(A) = \iota_\eta(A)$ for all $A \in \cP(\crit(E)) \cap \bar\cM^1$.
    
    We now argue that $\tau$ is elementary to finish the proof of Claim \ref{cl:realizationtau}. Let $[a,f] \in \Ult(\bar\cM^1,E)$ and let $\varphi$ be a formula. Let \[ A = \{ u \in \bar\cM^1 \mid \bar\cM^1 \vDash \varphi(f(u)) \}. \] Then 
    \begin{align*}
        \Ult(\bar\cM^1, E) \vDash \varphi([a,f]) & \text{ iff } 
        %a \in E(\{u \mid \bar\cM^1 \vDash \varphi(f(u))\}) \\
        A \in E_a \\
        & \text{ iff } \iota_\xi(A) \in \iota_\xi(E)_{\iota_\xi(a)}\\
        & \text{ iff } \rho \pwimg \iota_\xi(a) \in \iota_\xi(A)\\
        & \text{ iff } \rho \pwimg \iota_\xi(a) \in \iota_\eta(A)\\
        & \text{ iff } \rho \pwimg \iota_\xi(a) \in \{ u \in \cM^1 \mid \cM^1 \vDash \varphi(\iota_\eta(f)(u)) \}\\
        & \text{ iff } \cM^1 \vDash \varphi(\iota_\eta(f)(\rho \pwimg \iota_\xi(a)))\\
        & \text{ iff } \cM^1 \vDash \varphi(\tau([a,f])).
    \end{align*}
    
    %Finally, we have \[ \tau \circ \pi_E(x) = \tau([\emptyset, c_x]) = \iota_\eta(c_x)(\emptyset) = \iota_\eta(x) \] for every $x \in \bar\cM^1$, where $c_x$ denotes the constant function with value $x$.
  \end{proof}

    Let $\iota_{\xi+1}$ be the realization map $\tau$ in Claim \ref{cl:realizationtau}. Then we consider the copy tree $\iota_{\xi+1}\cT_2$ and obtain a realization of $\cT$ into a backgrounded iteration of $\cM$ as in Figure \ref{fig:RealizationSingleNonBackgroundedExtender}. Here $\bar\cN$ is the last model of $\cT_1$ and $\cN$ is the last model of the copy tree $\iota_0\cT_1$. Moreover, we write $\iota_0\cT_1 \upharpoonright \cM^1$ for the copy tree of $\cT_1 \upharpoonright \bar\cM^1$ with last model $\cM^1$.
    
    \begin{figure}[htb]
      \begin{tikzpicture}[decoration=snake]
        \draw[->,decorate] (0,0) node[left] {$\cM$}-- node[above=0.1cm] {$\iota_0\cT_1$} (2.3,0) node[right]
        {$\cM^1$}; 

        \draw[->,decorate] (0,-2.75) node[left] {$\bar\cM$}-- node[below=0.1cm] {$\cT_1$} (2.3,-2.75) node[right] {$\bar\cM^1$};
       
        \draw[->] (-0.3,-2.4) -- node[left] {$\iota_0$} (-0.3,-.25);
        \draw[->] (2.65,-2.4) -- node[left] {$\iota_\eta$} (2.65,-0.25);
        
        \draw[->,decorate] (3.1,0) node[left] {}-- node[above=0.1cm] {$\;\;\;\;\iota_{\xi+1}\cT_2$} (5.7,0) node[right] {}; % {$\cM^{2}$}; 

        \draw[->,decorate] (3.1,-2.75) node[left] {}-- node[below=0.1cm] {$\cT_1\;$} (4.5,-3.5) node[right] {$\bar\cN \ni E$};
        
        \draw[->,decorate] (3.1,0.1) node[left] {}-- node[above=0.1cm] {$\iota_0\cT_1\;\;\;\;\;\;$} (4.5,1.1) node[right] {$\cN$};
        
        \draw[->] (2.8,-2.5) node[left] {}-- node[right] {$\;\;\pi_E$} (4,-1.4) node[right]
        {$\Ult(\bar\cM^1,E)$}; 

        \draw[->] (4,-1.3) node[left] {}-- node[right] {$\;\iota_{\xi+1}$} (2.8,-0.25) node[right] {};
        
        \draw[->,decorate] (6.0,-1.4) node[left] {}-- node[below=0.1cm] {$\cT_2$} (7.2,-1.4) node[right] {}; % {$\bar\cM^{2}$};
        
        %\draw[->] (7.4,-1.2) node[left] {}-- node[right] {$\;\iota_\zeta$} (6.2,-0.25) node[right] {};
      \end{tikzpicture}
      \caption{Realization into a backgrounded iteration of $\cM$ when applying a single non-backgrounded extender $E$.}\label{fig:RealizationSingleNonBackgroundedExtender}
    \end{figure}
    
    Finally, we define $\Lambda(\cT)$ to be the canonical branch through $\cT$ (or more precisely through $\cT_2$) obtained from the iteration strategy for the background model $\cW$ by lifting the stack of the two trees $(\iota_0\cT_1\upharpoonright\cM^1, \iota_{\xi+1}\cT_2)$ to a stack of two trees on $\cW$. Note that $\Lambda(\cT)$ depends on our choice of realization map $\tau$ from Claim \ref{cl:realizationtau}.

  %\expl{We argue directly that we can define the realization map $\tau$ of the overlapping ultrapower in a small generic extension collapsing the size of the extender $E$ using absoluteness of well-foundedness. Find a branch by copying the rest of the tree to the iterate of $\cM$ into which we realize. Argue that this is a well-defined iteration tree on $\cM$ (maybe a stack of iteration trees of length two?), lift to the background $\cW$ and find a cofinal well-founded branch there. This gives us a branch for $\cT$.}

\setcounter{ca}{2}
  \begin{case}\label{case:iterabilityfinitelymanyextenders}
    $\cT$ uses more than one extender that is not fully backgrounded but these overlapping extenders are not used cofinally often in the tree.
  \end{case}
  
  This case is a straightforward generalization of the previous case. Suppose for example that $\cT$ uses two non-backgrounded extenders $E$ and $F$. If the second non-backgrounded extender $F$ gets applied to an earlier model in the tree, before the first non-backgrounded extender $E$ got used, we realize the ultrapower into the corresponding iterate of $\cM$ and continue there. This case is illustrated in Figure \ref{fig:RealizationTwoNonBackgroundedExtendersA}, where we for simplicity assume that $F$ gets applied to $\bar\cM$. We leave the technical details to the reader.
  
  \begin{figure}[htb]
      \begin{tikzpicture}[decoration=snake]
        \draw[->,decorate] (-3,0) node[left] {$\cM$}-- node[above=0.1cm] {$\iota_0\cT_1$} (2.3,0) node[right]
        {$\cM^1$}; 

        \draw[->,decorate] (-3,-2.75) node[left] {$\bar\cM$}-- node[below=0.1cm] {$\cT_1$} (2.3,-2.75) node[right] {$\bar\cM^1$};
       
        \draw[->] (-3.3,-2.4) -- node[left] {$\iota_0$} (-3.3,-.25);
        \draw[->] (2.65,-2.4) -- node[left] {} (2.65,-0.25);
        
        \draw[->,decorate] (3.1,0) node[left] {}-- node[above=0.1cm] {$\;\;\;\;\iota_{\xi+1}\cT_2$} (5.7,0) node[right] {$\cM^{2}$}; 

        \draw[->,decorate] (3.1,-2.75) node[left] {}-- node[below=0.1cm] {$\cT_1\;$} (4.5,-3.5) node[right] {$\bar\cN \ni E = E_\xi^\cT$};
        
        \draw[->,decorate] (3.1,0.1) node[left] {}-- node[above=0.1cm] {$\iota_0\cT_1\;\;\;\;\;\;$} (4.5,1.1) node[right] {$\cN$};
        
        \draw[->] (2.8,-2.5) node[left] {}-- node[right] {$\;\;\pi_E$} (4,-1.4) node[right]
        {$\Ult(\bar\cM^1,E)$}; 

        \draw[->] (4,-1.3) node[left] {}-- node[right] {$\;\iota_{\xi+1}$} (2.8,-0.25) node[right] {};
        
        \draw[->,decorate] (6.0,-1.4) node[left] {}-- node[below=0.1cm] {$\cT_2$} (7.2,-1.4) node[right] {$\bar\cM^{2} \ni F$};
        
        \draw[->] (7.4,-1.2) node[left] {}-- node[right] {$\;\iota_\zeta$} (6.2,-0.25) node[right] {};
        
        \draw[->] (-3,-2.5) node[left] {}-- node[right] {$\;\;\pi_F$} (-1.8,-1.4) node[right]
        {$\Ult(\bar\cM,F)$}; 

        \draw[->] (-1.8,-1.3) node[left] {}-- node[right] {$\;\iota_{\zeta+1}$} (-3,-0.25) node[right] {};
        
        \draw[->,decorate] (0.05,-1.4) node[left] {}-- node[below=0.1cm] {$\cT_3$} (1.3,-1.4) node[right] {}; % {$\bar\cM^{3}$};
        
        \draw[->,decorate] (-2.95,0.15) node[left] {}-- node[above=0.1cm] {$\iota_{\zeta+1}\cT_3\;\;\;\;\;\;\;\;$} (-1.5,1.1) node[right] {}; %{$\cM^3$};
      \end{tikzpicture}
      \caption{Realization into a backgrounded iteration of $\cM$ when applying two non-backgrounded extenders $E$ and $F$ with $\crit(E) > \crit(F)$.}\label{fig:RealizationTwoNonBackgroundedExtendersA}
    \end{figure}
    
    %BE CAREFUL WITH ENUM OF IOTA, the domain of $\iota_\xi$ is $\cM^{\cT}_\xi$, where $\cM^{\cT}_\xi$ is the model where $E_\xi \in \cM^{\cT}_\xi$ comes from. It will be applied to some $\cM_\xi^{\cT,*}$ and the ultrapower will be $\cM^\cT_{\xi+1}$.
    
    The cofinal well-founded branch $\Lambda(\cT)$ through $\cT$ (or more precisely through $\cT_3$) is as in the previous case obtained by lifting the iteration tree $\iota_{\zeta+1}\cT_3$ of $\cM$ to the background $\cW$.
  
  If we do not go back to an earlier model in the tree, e.g. if the critical point of the second non-backgrounded extender $F$ is larger than the critical point of the first non-backgrounded extender $E$ that got used, we continue similarly as in Case \ref{case:iterabilityoneextender} and copy to a stack of length three on $\cM$, cf. Figure \ref{fig:RealizationTwoNonBackgroundedExtendersB}. We again leave the technical details to the reader.
  
  \begin{figure}[htb]
      \begin{tikzpicture}[decoration=snake]
        \draw[->,decorate] (0,1) node[left] {$\cM$}-- node[above=0.1cm] {$\iota_0\cT_1$} (2.3,1) node[right]
        {$\cM^1$}; 

        \draw[->,decorate] (0,-2.75) node[left] {$\bar\cM$}-- node[below=0.1cm] {$\cT_1$} (2.3,-2.75) node[right] {$\bar\cM^1$};
       
        \draw[->] (-0.3,-2.4) -- node[left] {$\iota_0$} (-0.3,0.75);
        \draw[->] (2.65,-2.4) -- node[left] {} (2.65,0.75);
        
        \draw[->,decorate] (3.05,1) node[left] {}-- node[above=0.1cm] {$\;\;\;\;\iota_{\xi+1}\cT_2$} (7.2,1) node[right] {$\cM^{2}$}; 

        \draw[->,decorate] (3.1,-2.75) node[left] {}-- node[below=0.1cm] {$\cT_1\;$} (4.5,-3.5) node[right] {$\bar\cN \ni E = E_\xi^\cT$};
        
        \draw[->,decorate] (3.1,1.1) node[left] {}-- node[above=0.1cm] {$\iota_0\cT_1\;\;\;\;\;\;$} (4.5,2.1) node[right] {$\cN$};
        
        \draw[->] (2.8,-2.5) node[left] {}-- node[right] {$\;\;\pi_E$} (4,-1.4) node[right]
        {$\Ult(\bar\cM^1,E)$}; 

        \draw[->] (4,-1.3) node[left] {}-- node[right] {$\;\iota_{\xi+1}$} (2.8,0.75) node[right] {};
        
        \draw[->,decorate] (6.0,-1.4) node[left] {}-- node[below=0.1cm] {$\cT_2$} (7.2,-1.4) node[right] {$\bar\cM^{2}$};
        
        \draw[->] (7.5,-1.2) node[left] {}-- node[right] {} (7.5,0.75) node[right] {};
        
        \draw[->,decorate] (7.75,-1.65) node[left] {}-- node[below] {$\cT_2\;\;\;\;$} (9.1,-2.5) node[right] {$\bar\cN^* \ni F = E_\zeta^\cT$};
        
        \draw[->,decorate] (7.95,1.1) node[left] {}-- node[above=0.1cm] {$\iota_{\xi+1}\cT_2\;\;\;\;\;\;\;\;$} (9.35,2.1) node[right] {$\cN$};
        
        \draw[->] (7.8,-1.2) node[left] {}-- node[below] {$\;\;\;\;\pi_F$} (9,-0.55) node[right]
        {$\Ult(\bar\cM^2,F)$}; 

        \draw[->] (9,-0.45) node[left] {}-- node[right] {$\;\iota_{\zeta+1}$} (7.75,0.75) node[right] {};
        
        \draw[->,decorate] (11.0,-0.55) node[left] {}-- node[below=0.1cm] {$\cT_3$} (12.2,-0.55) node[right] {}; %{$\bar\cM^{3}$};
        
        \draw[->,decorate] (8,1) node[left] {}-- node[above=0.1cm] {$\;\;\;\;\iota_{\zeta+1}\cT_3$} (11,1) node[right] {}; %{$\cM^{3}$};
        
      \end{tikzpicture}
      \caption{Realization into a backgrounded iteration of $\cM$ when applying two non-backgrounded extenders $E$ and $F$ with $\crit(E) < \crit(F)$.}\label{fig:RealizationTwoNonBackgroundedExtendersB}
    \end{figure}
  
   The cofinal well-founded branch $\Lambda(\cT)$ through $\cT$ (or more precisely through $\cT_3$) is as in the previous case obtained by lifting the stack of iteration trees $(\iota_0\cT_1\upharpoonright\cM^1, \iota_{\xi+1}\cT_2\upharpoonright\cM^2,\iota_{\zeta+1}\cT_3)$ on $\cM$ to the background $\cW$. Here $\iota_0\cT_1\upharpoonright\cM^1$ and $\iota_{\xi+1}\cT_2\upharpoonright\cM^2$ denote the copy trees of $\cT_1 \upharpoonright \bar\cM^1$ and $\cT_2 \upharpoonright \bar\cM^2$ with last models $\cM^1$ and $\cM^2$ respectively. 

   The case when applying two non-backgrounded extenders $E$ and $F$ with $\crit(E) = \crit(F)$ is again similar.

  \begin{case}
    $\cT$ uses cofinally many non-backgrounded extenders.
  \end{case}
  
  By Lemma \ref{lem:iterationtreealmostlinear}, $\cT$ is almost linear on strongness witnesses. Therefore, there is a canonical cofinal branch $b$ through $\cT$ in this case and we can let $\Lambda(\cT) = b$. To continue, we can consider the tree $\cT^\frown b$ of length $\lambda+1$ and let $\iota_\lambda$ be the limit of $\iota_\xi$ for $\xi \in b$.
  %\expl{Argue that these overlapping extenders are almost linear in the tree. Since as in the previous case, fully backgrounded extenders cannot make us go back to a model before we applied the previous overlapping extender, this yields a canonical cofinal well-founded branch through $\cT$.}
  
  This finishes the definition of $\Lambda$. It is now straightforward to check that $\Lambda$ is an iteration strategy for $\bar\cM$ above $\kappa_i$ for trees of length ${<}\delta_{i+1}$.
\end{proof}

%\begin{remark*}
%  The iteration strategy $\Lambda$ described in the proof of Lemma \ref{lem:iterability} may not be universally Baire. Therefore, we cannot simply apply the standard argument to show that the construction of $\cM$ does not move when compared with a hull. We will use a more elaborate argument below, which will use the standard argument for backgrounded extenders but deals with the overlapping extenders separately showing that they never cause a disagreement on the $\cM$-side by using the certification of these extenders defined in Section \ref{sec:certifiedextenders}.
%\end{remark*}

%\subsection{The first strong cardinal}

%For the first strong cardinal it is easier to see that it is fully strong, because the characterization of the extender is easier. Maybe we do this as a warm up case here.

\subsection{Comparison against a hull}

The following lemma will be crucial in the proof that the construction of $\cM$ converges and all $\kappa_i$ for $i<\omega$ are fully strong in $\cM$. 

\begin{lemma}\label{lem:CompAgainstHull}
    Let $i < \omega$, suppose $(\mathsf{IH})_{i}$ and let \[\pi \colon \bar\cW \rightarrow \cW|\Omega \] for some sufficiently large\footnote{Note that we have not shown at this point that the construction of $\cM^{(i+1)}$ converges. In case $\cM^{(i+1)} \cap \Ord < \Ord$ pick $\Omega > \cM^{(i+1)} \cap \Ord$ and let $\cM^{(i+1)}|\Omega = \cM^{(i+1)}$.} $\Omega$ be such that letting $\crit(\pi) = \nu$, $\cW|\kappa_i \subseteq \bar\cW$, $(\kappa_j, \delta_j \mid j < \omega) \in \rng(\pi)$, the sequence of models of the construction of $\cM^{(i+1)}|\Omega$ is in the range of $\pi$, $\nu \in (\kappa_i, \delta_{i+1})$ is an inaccessible cardinal in $\cW$, and $\pi(\nu) = \delta_{i+1}$. Let $\bar\cM$ be the collapse of $\cM^{(i+1)}|\Omega$. 
    Then there is an iterate $\cM^*$ of $\bar\cM$ such that $\bar\cM$-to-$\cM^*$ is non-dropping, $\cM^* = \cM_\chi$ for some level $\cM_\chi$ in the construction of $\cM^{(i+1)}$, and if \[i \colon \bar\cM \rightarrow \cM^*\] denotes the iteration embedding, the critical point of $i$ is above $\kappa_i$.
\end{lemma}

\begin{proof}
  Let $(\bar\kappa_j, \bar\delta_j \mid j<\omega)$ be the preimages of $(\kappa_j, \delta_j \mid j < \omega)$ under $\pi$ (so, in particular, $\bar\kappa_j = \kappa_j$ and $\bar\delta_j = \delta_j$ for $j \leq i$). For notational simplicity we write $\cM$ for $\cM^{(i+1)}|\Omega$ and $\cW$ for $\cW|\Omega$.
  By Lemma \ref{lem:iterability} we have that $\bar\cM$ is $(\delta_{i+1}, \delta_{i+1})$-iterable with respect to extenders with critical point above $\kappa_i$ and we can compare its construction against the construction of $\cM$ (cf., e.g. \cite{SaZe} or \cite{St22}). We have $|\bar\cM| < \delta_{i+1}$ and we compare the construction of $\bar\cM$ against models $\cM_\xi$ of the construction of $\cM$ with $|\cM_\xi| < \delta_{i+1}$. We will see below that the comparison against the construction of $\cM$ will finish successfully before reaching models $\cM_\xi$ of size larger than $\delta_{i+1}$. So the iteration trees we consider on models in the construction of $\bar\cM$ will have length at most $\max(|\bar\cM|, |\cM_\xi|)^+ < \delta_{i+1}$ for ordinals $\xi$ with $|\cM_\xi| < \delta_{i+1}$. In addition, we will argue in Claim \ref{cl:noextenderleqi} below that all extenders used in this comparison on the $\bar\cM$-side have critical point above $\kappa_i$, so the iterability given by Lemma \ref{lem:iterability} suffices for $\bar\cM$. Moreover, we start with arguing that no disagreement is caused by the $\cM$-side, so the construction of $\cM$ does not move in the comparison process. This also shows that we do not need iterability for $\cM$ itself for the comparison argument.

  \begin{claim}\label{cl:Mdoesnotmove}
    The construction of $\cM$ does not move in the comparison.
  \end{claim}
  \begin{proof}
    Let $(\cM_\xi,\cN_\xi \mid \xi \leq \Omega)$ be the models in the construction of $\cM$ and suppose toward a contradiction that the construction of $\cM$ does move in the comparison.
    Let $E$ be the first extender on some $\cM_\xi$ 
 in the construction of $\cM$ that is used. Write $\cM^*$ for the iterate of the $\bar\cM$-side we have obtained up to this stage in the comparison and $\cT$ for the corresponding iteration tree of length $\lambda+1$ for some ordinal $\lambda$, i.e., $\cM_\lambda^\cT = \cM^*$. As the construction of $\cM$ wins the comparison, there is no drop on the main branch of the $\bar\cM$-side and we have an elementary embedding $i = i_{0\lambda}^\cT \colon \bar\cM_\xi \rightarrow \cM^*$ for $\bar\cM_\xi$ a model in the construction of $\bar\cM$.

 It is a standard argument that fully backgrounded extenders with critical point above $\kappa_i$ do not move in such a comparison (see for example \cite[Lemma 2.11]{Sa15} or \cite{SaZe}), so we can suppose that $\crit(E) \in \{\kappa_j \mid j < \omega\}$.
 $\bar\cM$ cannot iterate up to $\delta_{i+1} < \kappa_{i+1}$ as, by the choice of $\bar\cW$, $\bar\delta_{i+1}$ is not Woodin in $\cW$ and (as we will argue below) the iteration of the construction of $\bar\cM$ takes place above $\bar\delta_{i+1}$.

 \medskip

\noindent\textbf{Extenders on both sides:}
    We start by arguing that if there is an extender $F$ in $\cM^*$ with the same index as $E$ then $F$ is certified via the same tail strategy of $\Sigma_j$ as $F$ and hence $E = F$, contradicting the assumption that $E$ is used because of a disagreement. To see that $F$ is certified by the same strategy certifying $E$, we consider different cases depending on how the iteration from the construction of $\bar\cM$ to $\cM^*$ looks like. Let $\bar\Sigma$ denote the iteration strategy of $\bar\cW$ obtained as the pullback of $\Sigma$ and $\bar\cM_\xi$ the current model in the construction of $\bar\cM$.

    %\prpl{ Idea: Once we know that the $\vec A$-iterability embeddings exist, we know that they are unique (by (strong) $\vec A$-iterability). Does this also hold if the embeddings are in different models? The fact that the branches / embeddings are $\vec A$-correct should be local, it only involves the $\tau^\cR_{\vec A, \eta}$'s and these are in $\cR$.}
    
    %\expl{Use the fact that the extenders are still given by the characterization and therefore have to agree on the $\bar\cM$ side and the construction side $\cM$ (if there is an extender indexed on both sides - having the same index implies having the same critical point).}

    \begin{case}\label{case:allfullybackgrdd}
      On the main branch in the iteration from $\bar\cM_\xi$ to $\cM^*$ all extenders used are fully backgrounded, i.e., for every $E_\alpha^\cT$, $\alpha \in [0,\lambda]_T$, and every $l<\omega$, \[ \crit(E_\alpha^\cT) \neq i_{0\alpha}^\cT(\kappa_l^{\bar\cM_\xi}). \] % The model M_\alpha where E_alpha comes from and the model to which it gets applied agree on the powerset of the critical point of E_\alpha (otherwise we could not apply the extender there), so it is probably ok to use i_{0\alpha}^\cT(\kappa_l) here.
  \end{case}

  As the construction of $\cM$ did not move up to this point of the comparison, this means that disagreements were only caused by fully backgrounded extenders. %By Steel's comparison theorem \cite{St22}, coiterating against the construction by least disagreement of backgrounded extenders not only iterates away extender disagreements between $\bar\cM$ and $\cM$ but also strategy disagreements between the models. 
  All extenders used in the iteration from $\bar\cM_\xi$ to $\cM^*$ have critical point above $\kappa_i$ as $\bar\cM | \kappa_i = \cM | \kappa_i$ and none of the backgrounded extenders overlaps $\kappa_i$. % or has critical point $\kappa_i$.
  Write $\kappa_k^*$ for the image of $\bar\kappa_k$ in $\cM^*$ for all $k < \omega$ and note that $\kappa_k^* = \kappa_k$ for all $k \leq i$. Recall that $\crit(E) = \kappa_j < \kappa_i$, so the extender $F$ on $\cM^*$ has critical point $\kappa_j^* = \kappa_j$.
  
  Since we are comparing the constructions of $\bar\cM$ and $\cM$, we can at the same time consider the iterate $\cW^*$ of $\bar\cW$ via the background extenders and maintain the property that $\cM^*$ is the result of an $\cM$-construction inside $\cW^*$. In particular, $F$ is generically countably complete and hence by Lemma \ref{lem:genctblycompleteextenderscertified} certified in $\cW^*$ as it has critical point $\kappa_j = \kappa_j^* < \kappa_i$ and index above $\kappa_i$.

  By \cite[Theorem 3.3]{Sa15}, inside $\cW$, each $\cP^i$ has a unique iteration strategy $\Sigma_i$. So, since $\bar\cW$ is a hull inside $\cW$, $\bar\Sigma_i$ and $\Sigma_i$ agree in $\cW$ for all $i<\omega$. The $\bar\cW$-to-$\cW^*$ iteration can also be performed inside $\cW$ and by strategy coherence in $\cW$, the tail strategy $\Sigma_i^*$ of $\bar\Sigma_i$ along the $\bar\cW$-to-$\cW^*$ iteration is equal to $\Sigma_i$ for all $i<\omega$. Strategy coherence holds for $\Sigma_i$ by the argument\footnote{This argument works for Mitchell-Steel indexing without the additional fine structural assumptions made in \cite{St22}. We thank Benjamin Siskind for pointing this out.} for \cite[Corollary 7.6.9]{St22} or as a consequence of positionality, which holds for $\Sigma_i$ as shown in \cite{Sa15}. Therefore, not only $\cM^*$ and $\cM$ agree below the index of $F$ but also the strategies of their backgrounds $\cW^*$ and $\cW$ on $\cP^j$. So, in particular, they have the same tail strategy $(\Sigma_j)_{\cM_\infty^{j,-}}$ of $\Sigma_j$. As $E$ and $F$ are both certified via this strategy, this implies $E=F$. 

   \begin{case}\label{case:oneextender}
    The iteration from $\bar\cM_\xi$ to $\cM^*$ uses in addition to fully backgrounded extenders a single extender $G$ that is not fully backgrounded.\footnote{This case might actually occur for extenders $G$ with critical point the image of some $\bar\kappa_l$ for $l>i$.}
  \end{case}

  We again aim to show that $F$ is certified via the same tail strategy of $\Sigma_j$ that certifies $E$ and hence $E$ does not cause a disagreement in the comparison. 
  For notational simplicity we suppose that the iteration from $\bar\cM_\xi$ to $\cM^*$ only uses $G$, i.e., $\cM^* = \Ult(\bar\cM_\xi, G)$. Taking additional iterations via fully backgrounded extenders into account is a straightforward generalization of the argument below so we leave that to the reader. Write $(\kappa_i^*, \delta_i^* \mid i < \omega)$ for the image of $(\bar\kappa_i, \bar\delta_i \mid i < \omega)$ under $\pi_G \colon \bar\cM_\xi \rightarrow \Ult(\bar\cM_\xi, G)$, if these cardinals are defined in $\bar\cM_\xi$.

  %\prpl{Go through these subcases again and work out the details. Note: The index of $G$ needs to be below the index of $F$ (which is equal to the index of $E$) as we suppose that $G$ was used in the comparison before $E$. This should exclude Subcase 2.5 below.}

  %\expl{$\bar\cM$ and $\cM$ agree up to $\kappa_i$. As $E$ is the first extender that we want to use on the $\cM$-side of the comparison and $G$ already got used on the $\bar\cM$-side, we know that the critical point of $G$ has to be above $\kappa_i$ (otherwise it would have moved $\kappa_i$ and we would no longer have agreement up to $\kappa_i$).}

 % \begin{subcase}
 %   $\crit(F) < \crit(G)$. 
 % \end{subcase}

  %In this case we can use the same direct limit system that certifies extenders with critical point $\crit(F)$ in $\bar\cM$ to certify $F$ in $\cM^*$ but the tree on $\cM_\infty$ giving rise to $F$ is different as the extender we want to characterize gets stretched by the iteration embedding.

  %Suppose toward a contradiction that $F$ is not certified via a tail strategy of $\Sigma_j$. 
  Let $\dbar\cW$ be a countable hull of $\bar\cW$ and hence in particular a countable hull of $\cW|\Omega$, let \[ \bar\pi \colon \dbar\cW \rightarrow \bar\cW \] and \[ \sigma \colon \dbar\cW \rightarrow \cW \] be the hull embeddings and $k$ an embedding such that $\sigma = k \circ \bar\pi$. Suppose all relevant objects in $\bar\cW$ and $\cW$ are in the range of $\bar\pi$ and $\sigma$ respectively. Let $\dbar\cM$ be the $\bar\pi$-collapse of $\bar\cM$, $\bar\cM^*$ the $\bar\pi$-collapse of $\cM^*$ and let $\bar\pi(\bar G, \bar F) = (G,F)$. Write $\dbar\Sigma$ for the strategy of $\dbar\cW$, i.e., $\sigma(\dbar\Sigma) = \Sigma$.
  
   \begin{figure}[htb]
      \begin{tikzpicture}
        \draw (0,0) node[left] {$\cM$};

        \draw[->] (0,-1.75) node[left] {$\bar\cM$}-- node[below] {} (1.2,-1.75) node[right] {$\Ult(\bar\cM,G) = \cM^*$};
       
       \draw[->] (0,-3.5) node[left] {$\dbar\cM$}-- node[below] {} (4.5,-3.5) node[right] {$\Ult(\dbar\cM,\bar G) = \bar\cM^*$};
       
       \draw[->] (7.5,-3.5) node[left] {}-- node[below] {} (9,-3.5) node[right] {$\Ult(\bar\cM^*,\bar F)$};

        \draw[->] (-0.35,-1.4) -- node[left] {$k$} (-0.35,-0.25);
        \draw[->] (-0.35,-3.15) -- node[left] {$\bar\pi$} (-0.35,-2);
        %\draw[<-, bend left] (-0.45,-0.25) -- (-1.35,-1.4) -- (-1.35,-2) -- (-0.45,-3.15);
        \draw[<-, bend right=40] (-0.7,-0.1) to node[left] {$\sigma$} (-0.7,-3.45);
        
        \draw[->] (5,-3.15) -- node[below] {$\tau_{\bar G}^{\bar\cM}\;\;\;$} (0,-2);
        
        \draw[->, bend right=30] (5.5,-3.1) to node[right] {$\;\;\;\tau_{\bar G}^\cM$} (0,-0.15);
        
        \draw[->, bend right=30, dashed] (10,-3.1) to node[right] {$\;\;\;\tau$} (0,0);
      \end{tikzpicture}
      \caption{Realizing $\Ult(\bar\cM^*,\bar F)$.}\label{fig:RealizationDoubleHull}
    \end{figure}

  \begin{subclaim}%\label{cl:realizingUlt(barcM*,bar F)}
    There is a realization map $\tau \colon \Ult(\bar\cM^*,\bar F) \rightarrow \cM$.
  \end{subclaim}
  \begin{proof}
    By case assumption, $G$ is generically countably complete in $\bar\cW$. In particular, $\Ult(\dbar\cM,\bar G)$ can be realized into $\bar\cM$. Let $\tau_{\bar G}^{\bar\cM} \colon \Ult(\dbar\cM,\bar G) \rightarrow \bar\cM$ be the realization map and let \[\tau_{\bar G}^{\cM} = k \circ \tau_{\bar G}^{\bar\cM}\] be a realization of $\Ult(\dbar\cM,\bar G)$ into $\cM$, cf. Figure \ref{fig:RealizationDoubleHull}. Write $H = \tau_{\bar G}^{\cM}(\bar F)$. Recall that $E$ is on the sequence of $\cM$ and $\crit(E) = \kappa_j$ for $j \leq i$. As $\cM^*$ results from a comparison against the construction of $\cM$, $\cM^*$ and $\cM$ agree below the index of $E$. In particular, $\kappa_j^* = \kappa_j$. Therefore, in $\cM^* = \Ult(\bar\cM, G)$, $\crit(F) = \kappa_j^*$ and $F$ is on the sequence of $\cM^*$. By elementarity, the analogous statement holds for $\bar F$ in $\Ult(\dbar\cM,\bar G)$.
    Therefore, the critical point of $H$ is some $\kappa_j$, $j<\omega$, and $H$ is on the sequence of $\cM$. This yields that $H$ is generically countably complete so there is a realization map $\tau \colon \Ult(\bar\cM^*,\bar F) \rightarrow \cM$, as desired.
  \end{proof}

  Using the previous subclaim we can argue as in the proof of Lemma \ref{lem:genctblycompleteextenderscertified} to obtain that $\bar F$ is certified via a tail of $\dbar\Sigma_j$. By elementarity and an argument as in Case \ref{case:allfullybackgrdd} we can obtain that $F$ is certified via the same tail strategy of $\Sigma_j$ that certifies $E$.

 \begin{case}\label{case:finitelymanyextenders}
    The iteration from $\bar\cM$ to $\cM^*$ uses in addition to fully backgrounded extenders finitely many extenders that are not fully backgrounded.
  \end{case}

  This is a straightforward generalization of the previous case using the fact that we can still find realization maps for iterates of countable substructures as in the iterability proof, cf. Lemma \ref{lem:iterability}. Therefore, we also get in Case \ref{case:finitelymanyextenders} that $E$ does not cause a disagreement.

  \begin{case}
    $\cM^*$ results from a limit stage of the iteration and cofinally often below this stage an extender with critical point the image of some $\bar\kappa_i$, $i<\omega$, was applied.
  \end{case}
  Let $(E_l \mid l<\gamma)$ be the sequence of extenders used that have critical point the image of some $\bar\kappa_l$ for $l<\omega$. Recall that the extenders need to be of increasing length and note that if there is some $l < k < \gamma$ such that $\crit(E_l) > \crit(E_k)$ and $E_k$ is the first extender where this happens after stage $l$, then $E_k$ gets applied to $\bar\cM_\xi$. As we are only interested in the extenders on the main branch leading from $\bar\cM_\xi$ to $\cM^*$, we might as well assume that the sequence $(\crit(E_l) \mid l<\gamma)$ is nondecreasing.

  \begin{subcase}\label{subcase:samecriticalpoint}
    The critical points of all $E_l$ are images of the same $\bar\kappa_k$ for some fixed $k<\omega$.
  \end{subcase}

  Note that even in the case that $\crit(F) = \kappa_k^*$ we can define the direct limit system that gives rise to the direct limit $(\cM_\infty^k)^*$ to characterize $F$ in $\cM^*$ as in Section \ref{sec:directlimitsystems} via the internalized limit of $\cN_{\delta^*}$'s for $\kappa_k^*$-weak cutpoints $\delta^*$ in $\cM^*|\kappa_k^*$ using elementarity between $\bar\cM_\xi$ and $\cM^*$. By agreement in the comparison process, $\cM^*|\kappa_k^* = \cM | \kappa_k$ and $(\cM_\infty^k)^* = \cM_\infty^k$.
  
  %\prpl{Write the details of the iterability proof and then maybe simplify this part of the argument. If we can always realize $\bar\cM^*$, we might not need to distinct all these different cases and subcases.}
  
  Now we can finish the argument in this subcase by arguing as in Case \ref{case:oneextender}: Consider a countable hull $\dbar\cW$ of $\bar\cW$ and let $\bar\cM^*$, $\bar F$ be the collapses of $\cM^*$, $F$. By the iterability proof, cf. the proof of Lemma \ref{lem:iterability}, there is an elementary embedding $\iota \colon \bar\cM^* \rightarrow \cM^*$. Hence, we can use the same argument as in Case \ref{case:oneextender} to argue that $\Ult(\bar\cM^*,\bar F)$ can be realized and therefore $\bar F$ is certified.

  \begin{subcase}
    The critical points of the $E_l$ are increasing, i.e., not all $E_l$ have the same critical point, but the critical points are not cofinal in the images of the $\bar\kappa_l$.
  \end{subcase}
  
  Then the sequence $(E_l \mid l<\gamma)$ will on a tail arise from extenders with critical points that are images of the same $\kappa_k$ for some $k<\omega$. In fact, we can split the sequence $(E_l \mid l<\gamma)$ into finitely many blocks, each block consisting of extenders with critical points that are images of the same $\bar\kappa_k$ for some $k<\omega$.
  Now the argument is a straightforward generalization of Subcase \ref{subcase:samecriticalpoint} and Case \ref{case:finitelymanyextenders}.

  \begin{subcase}
    The critical points of the $E_l$ are cofinal in the images of the $\bar\kappa_l$, i.e., in $\cM^*$ the supremum of the critical points of the $E_l$ is equal to the supremum of the images of $\bar\kappa_l$.
  \end{subcase}
  
  In this case the index of $F$ is above the supremum of $\kappa_k^*$ for $k<\omega$ and the critical point of $F$ is $\kappa_j^* = \kappa_j$ by agreement between $\cM^*$ and $\cM$. As in Case \ref{case:oneextender} we can consider a countable hull $\dbar\cW$ of $\bar\cW$ and let $\bar\cM^*$, $\bar F$ be the collapses of $\cM^*$, $F$. By the iterability proof, cf. the proof of Lemma \ref{lem:iterability}, there is an elementary embedding $\iota \colon \bar\cM^* \rightarrow \cM^*$. Hence, we can use the same argument as in Case \ref{case:oneextender} to argue that $\Ult(\bar\cM^*,\bar F)$ can be realized and therefore $\bar F$ is certified.
  
 This finishes the argument that $E$ does not cause a disagreement in the case that there is an extender $F$ with the same index in the $\bar\cM$-side of the comparison.
  
  \medskip

  \noindent\textbf{Extender only on the $\cM$ side:}
  We finally argue that the reason $E$ is used on the $\cM$-side of the comparison cannot be that there is no extender $F$ with the same index as $E$ on $\cM^*$. So say $E = E_\alpha^\cM$ and suppose toward a contradiction that $E_\alpha^{\cM^*} = \emptyset$. Recall that $\crit(E_\alpha^\cM) = \kappa_j \leq \kappa_i$. We start by showing that the iteration on the $\bar\cM$-side does not use any extenders with critical point $\leq \kappa_j$.
  
  \begin{subclaim}
    The iteration from $\bar\cM_\xi$ to $\cM^*$ does not use any extenders with critical point ${\leq}\kappa_j$.
  \end{subclaim}
  \begin{proof}
  Suppose toward a contradiction that there is some extender $F$ with critical point the image of $\kappa_l = \bar\kappa_l \leq \bar\kappa_j = \kappa_j$ that gets used in the iteration from $\bar\cM_\xi$ to $\cM^*$.
  
  \setcounter{ca}{0}
  \begin{case}
      $F$ is used as there is no extender with the same index on the corresponding model $\cM_\zeta$ in the construction of $\cM$.
    \end{case}
    
    In case $\kappa_j < \kappa_i$, recall that we suppose that $\kappa_l$ is fully strong in $\cM$. So there is some extender $E_\beta^\cM$ on the sequence of $\cM$ with critical point $\kappa_l$ and index $\beta$ above the index of $F$ on the $\cM^*$-sequence. If $\kappa_l = \kappa_j = \kappa_i$, $F$ is indexed on the $\cM^*$-sequence below $\delta_{i+1}$ as $\cM^* \cap \Ord < \delta_{i+1}$. Since $\kappa_i$ is ${<}\delta_{i+1}$-strong in $\cM$, there is an extender $E_\beta^\cM$ on the $\cM$-sequence with critical point $\kappa_i$ and $\ind^\cM(E_\beta^\cM) > \ind^{\cM^*}(F)$. In both cases, let $\beta$ be minimal with this property and write $\alpha$ for the $\cM^*$-index of $F$. Then $\alpha$ is a cutpoint in $\Ult(\cM, E_\beta^\cM)$ and the $(\kappa_l,\alpha)$-extender $E$ derived from the ultrapower embedding $\pi^\cM_{E_\beta^\cM}$ is on the $\cM$-sequence by the initial segment condition. So $E_\alpha^\cM = E$ and $E$ is on the sequence of $\cM_\gamma$ for some $\gamma \geq \zeta$. Therefore, even if $F$ is used in the comparison against $\cM_\zeta$ it will not be used in the iteration of $\bar\cM_\xi$ into the construction of $\cM$ as this part of the comparison gets collapsed when moving on to the comparison with $\cM_\gamma$. So we might and will assume that the iteration from $\bar\cM_\xi$ to $\cM^*$ involves comparisons against $\cM_\gamma$ or larger models from the construction of $\cM$.

    \begin{case}
      $F$ is used as there is an extender $G$ with the same index on the corresponding model $\cM_\zeta$ in the construction of $\cM$ and $F \neq G$.
    \end{case}
    
    In this case we can argue that $F$ and $G$ are both certified with the same critical point $\kappa_l$ and via the same strategies. Therefore, $F=G$ and hence this case cannot occur. 
  \end{proof}

  We will now argue that there is an extender $F$ with the same index as $E$ on $\cM^*$. For notational simplicity, we write $\bar\cM_\xi = \bar\cM$ in what follows.
  
  Suppose first that $\kappa_j < \kappa_i$. Then, by our hypothesis, $\kappa_j$ is strong in $\bar\cM$. Hence by the subclaim $\kappa_j$ is strong in $\cM^*$, so there is some extender $E_\beta^{\cM^*}$ with critical point $\kappa_j$ and $\beta > \alpha$. Let $\beta>\alpha$ be minimal such that $E_\beta^{\cM^*}$ is an extender with critical point $\kappa_j$. Then $\alpha$ is a cutpoint in $\Ult(\cM^*,E_\beta^{\cM^*})$ and the $(\kappa_j,\alpha)$-extender $F$ derived from the ultrapower embedding $\pi_{E_\beta^{\cM^*}}^{\cM^*}$ is on the sequence of $\cM^*$ by the initial segment condition. I.e., $F = E_\alpha^{\cM^*}$, contradicting the assumption that $E_\alpha^{\cM^*} = \emptyset$.

  Now consider the case $\kappa_j = \kappa_i$. In this case we use a more complicated argument to derive a contradiction. Let, as before, $E$ be the extender causing the disagreement on the $\cM$-side or any  long enough extender with critical point $\kappa_i$ on the model $\cM_\zeta$ in the construction of $\cM$ that is currently compared against $\cM^*$. Consider the extender $F$ with critical point $\kappa_i$ given by \[ (a,A) \in F \text{ iff } (i(a),A) \in E \] for $a \in [\Ord]^{<\omega}$ and $A \subseteq [\kappa_i]^{|a|}$. By the last subclaim, $\bar\cM \cap \Pot(\kappa_i) = \cM^* \cap \Pot(\kappa_i) = \cM_\zeta \cap \Pot(\kappa_i)$ and this defines an extender $F$ over $\bar\cM$. Let $\bar\xi$ be the least $i$-weak cutpoint above $\kappa_i$ in $\Ult(\bar\cM,F)$. Recall that an ordinal $\alpha$ is an $i$-weak cutpoint above $\kappa_i$ if all extenders overlapping $\alpha$ have critical point $\kappa_j$ for some $j<i$. Then $\bar\eta = (\bar\xi^+)^{\Ult(\bar\cM,F)}$ is the ordinal where $F$ would be indexed if it is on the sequence of $\bar\cM$.

  We argue that $F$ coheres $\bar\cM | \bar\eta$ and is generically countably complete in $\cW$.
    %Let $\xi$ be the least weak cutpoint above $\kappa_i$ in $\Ult(\cM,F)$ and $\bar\xi$ the least weak cutpoint above $\kappa_i$ in $\Ult(\bar\cM,E)$. 
    Consider the commuting diagram in Figure \ref{fig:CoherenceDiagram}, where $\sigma$ is given by \[ [a,f]_F \mapsto [i(a),i(f)]_E. \]

    \begin{figure}[htb]
      \begin{tikzpicture}
        \draw[->] (0,0) node[left] {$\cM_\zeta|i(\bar\xi)$}-- node[above] {$j_E$} (3.3,0) node[right]
        {$j_E(\cM_\zeta|i(\bar\xi))$}; 

        \draw[->] (-0.25,-1.75) node[left] {$\bar\cM | \bar\xi$}-- node[below] {$j_F$} (3.55,-1.75) node[right] {$j_F(\bar\cM|\bar\xi)$};
       
        \draw[->] (-0.75,-1.4) -- node[left] {$i$} (-0.75,-0.25);
        \draw[->] (4.2,-1.4) -- node[right] {$\sigma$} (4.2,-0.25);
      \end{tikzpicture}
      \caption{Coherence.}\label{fig:CoherenceDiagram}
    \end{figure}

    For $a \in [\bar\eta]^{<\omega}$, we have $a = [a, \id]_F^{\bar\cM}$, so in particular for all ordinals $\chi \leq \bar\xi$, \[ \sigma(\chi) = \sigma([\chi, \id]_F^{\bar\cM}) = [i(\chi), \id]_E^{\cM_\zeta} = i(\chi). \]
    
    \begin{subclaim}\label{subcl:AgreementBetweenMbarandUltMbarE}
      %$i \upharpoonright (\bar\cM|\bar\xi) = \sigma \upharpoonright (\bar\cM|\bar\xi)$
      $\bar\cM|\bar\xi = \Ult(\bar\cM,F)|\bar\xi.$
    \end{subclaim}
    \begin{proof}
      Let $X = \sigma \pwimg (\Ult(\bar\cM, F)|\bar\xi)$ and $Y = i \pwimg (\bar\cM | \bar\xi)$. We will argue that $X = Y$ using that they are both elementary substructures of the same model and have the same ordinals. %This yields the subclaim as the transitive collapse of $X$ is $\Ult(\bar\cM,E)|\bar\xi$ and the transitive collapse of $Y$ is $\bar\cM|\xi$.
      
      First, note that $X$ and $Y$ are both (uncollapsed) elementary substructures of $\Ult(\cM_\zeta,E)|i(\bar\xi)$ as $\cM_\zeta | i(\bar\xi) = \Ult(\cM_\zeta,E)|i(\bar\xi)$ using that $E$ coheres $\cM_\zeta | \eta$.
      
      $\Ult(\cM_\zeta,E)$ is a fine structural premouse, so every set in $\Ult(\cM_\zeta,E)$ is ordinal definable (with the extender sequence as a predicate). Let $x \in X$ be arbitrary and let $\varphi_x(v, v_0, \dots, v_n)$ be a formula defining $x$ in $\Ult(\cM_\zeta,E)|i(\bar\xi)$. Then \[ \Ult(\cM_\zeta,E)|i(\bar\xi) \vDash \exists\alpha_0, \dots, \alpha_n \, \varphi_x(x, \alpha_0, \dots, \alpha_n) \wedge \forall z (\varphi_x(z, \alpha_0, \dots, \alpha_n) \rightarrow x = z). \] By elementarity, the same statement holds in $X$. So there are ordinals $\xi_0, \dots, \xi_n \in X$ such that  \[ X \vDash  \varphi_x(x, \xi_0, \dots, \xi_n) \wedge \forall z (\varphi_x(z, \xi_0, \dots, \xi_n) \rightarrow x = z). \] As $X \cap \Ord = Y \cap \Ord$, in particular $\xi_0, \dots, \xi_n \in Y$ and we have by elementarity between $X$ and $\Ult(\cM_\zeta,E)|i(\bar\xi)$ as well as between $Y$ and $\Ult(\cM_\zeta,E)|i(\bar\xi)$, \[ Y \vDash \exists y \,
      \varphi_x(y, \xi_0, \dots, \xi_n) \wedge \forall z (\varphi_x(z, \xi_0, \dots, \xi_n) \rightarrow y = z). \] 
      As $Y \preceq \Ult(\cM_\zeta,E)|i(\bar\xi)$ and $x$ is the unique element in $\Ult(\cM_\zeta,E)|i(\bar\xi)$ satisfying $\varphi_x(x, \xi_0, \dots, \xi_n)$, this in fact implies $x \in Y$. So we have shown that $X \subseteq Y$. The same argument shows $Y \subseteq X$ and we have $X = Y$, as desired.
    \end{proof}
    
    The subclaim implies $\bar\cM|\bar\xi \unlhd j_F(\bar\cM|(\kappa_i^+)^{\bar\cM})$. Recall that for coherence we aim to show that in fact \[ \bar\cM|(\bar\xi^+)^{\Ult(\bar\cM,F)} =  j_F(\bar\cM|(\kappa_i^+)^{\bar\cM})|(\bar\xi^+)^{\Ult(\bar\cM,F)}. \] We will use the operators for this argument.
    In the following subclaim we show that $F$ is given by operators and hence is the extender derived from the hull embedding $k \colon S_i(\bar\cM|\kappa_i) \rightarrow S_i(\bar\cM|\bar\xi)$.

    \begin{subclaim}
      $F$ is given by operators.
    \end{subclaim}
    \begin{proof}
    Recall that $E$ is given by operators by Lemmas \ref{lem:backgroundedextenderscertified} and \ref{lem:certifiedextendersaregivenbyoperators}. We will use this fact together with ideas from the argument for condensation of the operators in \cite[Lemma 2.27]{Sa17} to show that $F$ is given by operators as well. Let $\sigma \colon \Ult(\bar\cM,F) \rightarrow \Ult(\cM_\zeta,E)$ be the canonical factor map as defined above and let $\nu < \kappa_0$ be the least Woodin cardinal above $\delta_0$ in $\cM_\zeta$ and $\bar\cM$. 
    % This is really pure $Lp(X)$ without any strategy $\Sigma$.
    Note that $Lp^\infty_\omega(\bar\cM|\nu) = Lp^\infty_\omega(\cM_\zeta|\nu)$ is suitable at $\kappa_0$ and $(\Ord,\Ord)$-iterable via the canonical iteration strategy obtained from $\Sigma$ by lifting the iteration trees to the background model $\cW$ using that all extenders in $Lp^\infty_\omega(\cM_\zeta|\nu)$ are fully backgrounded. Via this iteration strategy we can make $\bar\cM|\bar\xi$ and $\cM_\zeta|\xi$ for $\xi = i(\bar\xi)$ generic over iterates $\bar\cN$ and $\cN$ of $Lp^\infty_\omega(\bar\cM|\nu) = Lp^\infty_\omega(\cM_\zeta|\nu)$. Note that $\cM_\zeta|\xi \in \Ult(\cM_\zeta,E)$ by coherence of $E$ and $\bar\cM|\bar\xi \in \Ult(\bar\cM,F)$ by Subclaim \ref{subcl:AgreementBetweenMbarandUltMbarE}. So we in fact have $\bar\cN \in \Ult(\bar\cM,F)$ and $\cN \in \Ult(\cM_\zeta,E)$. Let $\bar\pi$ and $\pi$ denote the corresponding iteration embeddings, cf. Figure \ref{fig:CoherenceProofGenericityIteration}.

     \begin{figure}[htb]
      \begin{tikzpicture}
        \draw[->] (0,-1.65) node[left] {}-- node[above] {$\pi$} (3.3,0) node[right]
        {$\cN$ \text{ with } $\cN[G] \ni \cM_\zeta | \xi$}; 

        \draw[->] (0,-1.75) node[left] {$Lp^\infty_\omega(\bar\cM|\nu) = Lp^\infty_\omega(\cM_\zeta|\nu)$}-- node[below] {$\bar\pi$} (3.3,-1.75) node[right] {$\bar\cN$ \text{ with } $\bar\cN[\bar G] \ni \bar\cM | \bar\xi$};
       
        \draw[->] (3.6,-1.4) -- node[right] {$\sigma \upharpoonright \bar\cN$} (3.6,-0.25);
      \end{tikzpicture}
      \caption{Genericity iterations.} \label{fig:CoherenceProofGenericityIteration}
    \end{figure}
    
    %\prpl{Wait, we might need to iterate segments of $\cW$ not of $\cM$ here. $\cM$ does not even have a nice iteration strategy and we probably want to apply the operator to segments of $\cW$ below. But be careful, $E$ and $F$ only cohere $\bar\cM$ and $\cM$, not $\bar\cW$ and $\cW$.}
    
    By branch condensation, not only $\pi$ but also $\bar\pi$ is given by the strategy obtained from $\Sigma$ via lifting the iteration trees to the background model. 
    So as $\Sigma$ restricted to trees on $Lp^\infty_\omega(\cM_\zeta|\nu)$ is strongly guided by term relations, we have $\pi = (\sigma \upharpoonright \bar\cN) \circ \bar\pi$.

    Using the ideas from the proof of \cite[Lemma 2.24]{Sa17}, $S_i(\bar\cN) \in \Ult(\bar\cM,F)$ and analogously $S_i(\cN) \in \Ult(\cM_\zeta,E)$. In particular, $\sigma$ is defined on $S_i(\bar\cN)$. As $\bar\pi$ and $\pi$ are via the canonical iteration strategy obtained from $\Sigma$ by lifting trees to the background model, we have the same for their extensions to $S_i(Lp^\infty_\omega(\bar\cM|\nu))$. By \cite[Lemma 2.23]{Sa17}, \[ \bar\pi(S_i(Lp^\infty_\omega(\bar\cM|\nu))) = S_i(\bar\cN) \] and similarly \[ \pi(S_i(Lp^\infty_\omega(\cM_\zeta|\nu))) = S_i(\cN). \] %(Note: A good universal mouse to define $S(Lp_\omega(\bar\cM|\nu))$ is an initial segment of $\cM$ itself, therefore $S(Lp_\omega(\bar\cM|\nu))$ is an element of $\cM$ and it makes sense to lift the iteration to it.) 
      
      We still have $\pi = (\sigma \upharpoonright S_i(\bar\cN)) \circ \bar\pi$ for the lifted embeddings and in particular $\sigma: S_i(\bar\cN) \rightarrow S_i(\cN)$.
      As in addition $\sigma \upharpoonright \bar\cM|\bar\xi \colon \bar\cM|\bar\xi \rightarrow \cM_\zeta|\xi$, standard lifting arguments for elementary embeddings through generic extensions yield that we can lift $\sigma$ to \[ \sigma^+ \colon S_i(\bar\cN)[\bar\cM|\bar\xi] \rightarrow S_i(\cN)[\cM_\zeta|\xi]. \]

      Now, by the argument in \cite[Lemma 2.23]{Sa17}, $S_i(\bar\cM|\bar\xi)$ is the result of a $\cP$-construction\footnote{$\cP$-constructions were introduced by Steel, cf. \cite{SchSt09} or \cite{St08b} for the definition and basic facts about $\cP$-constructions.} over $\bar\cM|\bar\xi$ in $S_i(\bar\cN)[\bar\cM|\bar\xi]$. Analogously, $S_i(\cM_\zeta|\xi)$ is the result of a $\cP$-construction over $\cM_\zeta|\xi$ in $S_i(\cN)[\cM_\zeta|\xi]$. Therefore, by definability of the $\cP$-construction from the extender sequence, \[ \sigma^+ \upharpoonright S_i(\bar\cM|\bar\xi) \colon S_i(\bar\cM|\bar\xi) \rightarrow S_i(\cM_\zeta|\xi). \]

      Write $m \colon S_i(\cM_\zeta|\kappa_i) \rightarrow S_i(\cM_\zeta|\xi)$ and $l \colon S_i(\cM_\zeta | \kappa_i) \rightarrow S_i(\bar\cM|\bar\xi)$ for the hull embeddings. Then by the definition of the hull embeddings and since $\sigma^+ \upharpoonright \cM_\zeta|\kappa_i = \sigma \upharpoonright \cM_\zeta|\kappa_i = \id$, \[ m = (\sigma^+ \upharpoonright S_i(\bar\cM|\bar\xi)) \circ l. \]

      Finally, this implies that $F$ is the extender derived from $l$ and hence given by operators by the following computation. Let $a \in [\bar\eta]^{{<}\omega}$ and $A \subseteq [\kappa_i]^{|a|}$. Then, as the extender $E$ is given by operators and hence equal to the extender derived from the embedding $m$,
      \begin{align*}
          (a,A) \in F &\text{ iff } a \in \pi_F(A)\\
          &\text{ iff } \sigma(a) \in \pi_E(A)\\
          &\text{ iff } \sigma(a) \in m(A)\\
          &\text{ iff } \sigma^+(a) \in \sigma^+(l(A))\\
          &\text{ iff } a \in l(A).
      \end{align*}
    \end{proof}

    As $F$ is given by operators, it agrees with the extender derived from the hull embedding $k \colon S_i(\bar\cM | \kappa_i) \rightarrow S_i(\bar\cM | \bar\xi)$. This implies $\bar\cM | (\bar\xi^+)^{\Ult(\bar\cM,F)} \unlhd j_F(\bar\cM | (\kappa_i^+)^\cM)$ and hence finishes the argument that $F$ coheres $\bar\cM | \bar\xi$.

    To show that $F$ is generically countably complete, let $G$ be generic over $\cW$ for a forcing of size ${<}\kappa_i$ and let $\tau_E \colon \bigcup_k a_k \rightarrow \kappa_i$ be a function witnessing that $E$ is countably complete in $\cW[G]$. Then $\tau_F$ given by $\tau_F(a) = x$ if and only if $\tau_E(i(a)) = x$ witnesses that $F$ is countably complete in $\cW[G]$ by the following argument: Let $\langle (a_k,A_k) \mid k < \omega\rangle$ be a sequence such that $(a_k,A_k) \in F$ for all $k<\omega$. Then $(i(a_k),A_k) \in E$ and therefore $\tau_E \pwimg (i \pwimg a_k) = \tau_E \pwimg i(a_k) \in A_k$. Hence, $\tau_F \pwimg a_k \in A_k$, as desired.

    As $F$ is given by operators and $\Ult(\cM_\zeta, F)|\bar\xi = \Ult(\bar\cM, F) | \bar\xi = \bar\cM | \bar\xi$ by the previous subclaims, Lemma \ref{lem:extenderdefinable} yields $F \in M_1^{\#,\Sigma_i}(\bar\cM|\bar\xi)$. In particular, $F \in \bar\cW$.

Finally, a straightforward search tree argument shows that $F$ is also generically countably complete in $\bar\cW$ by absoluteness of well-foundedness. So $F$ must be added to $\bar\cM$, the result of an $\cM$-construction in $\bar\cW$. But then we can argue that $E_\alpha^{\cM^*} \neq \emptyset$. 

  This finishes the proof that the construction of $\cM$ does not move in the comparison.
  \end{proof}

The following claim is another key observation in the comparison argument.
  
  \begin{claim}\label{cl:noextenderleqi}
    On both sides of the iteration, no extender with critical point ${\leq}\kappa_i$ is used.
  \end{claim}
  \begin{proof}
    By Claim \ref{cl:Mdoesnotmove} it suffices to show this for $\bar\cM$. Suppose the $\bar\cM$-side of the coiteration uses an extender with critical point ${\leq}\kappa_i$ and let $F$ be the first such extender that is used. Let $\cM^*$ be the current stage of the iteration on the $\bar\cM$-side, i.e., $F \in \cM^*$, and let $\cU$ be the iteration tree of length $\lambda+1$ on some model $\bar\cM_\xi$ in the construction of $\bar\cM$  such that $\cM^* = \cM_\lambda^\cU$. As $\cM|\kappa_i = \bar\cM|\kappa_i = \bar\cM_\xi | \kappa_i$, $F$ is indexed above $\kappa_i$. % by minimality of $F$, $\bar\cM|\kappa_i = \cM^*|\kappa_i$.
    As in $\cM$ no fully backgrounded extender overlaps $\kappa_j$ for any $j<\omega$, $F$ has critical point $\kappa_j$ for some $j \leq i$. We can argue as in the proof of Claim \ref{cl:Mdoesnotmove}, cf. Case \ref{case:oneextender}, to obtain that $F$ is certified.
 
    \begin{case}
      $F$ is used as there is no extender with the same index on the corresponding model $\cM_\zeta$ in the construction of $\cM$.
    \end{case}
    
    \begin{subcase}
      $\crit(F) = \kappa_j$ for $j<i$.
    \end{subcase}
    
    Recall that we suppose that $\kappa_j$ is fully strong in $\cM$ and argue as in the case $\kappa_j < \kappa_i$ in the proof of the previous claim. So there is some extender $E_\beta^\cM$ on the sequence of $\cM$ with critical point $\kappa_j$ and $\beta$ above the index of $F$ (in the $\cM^*$-sequence). Let $\beta$ be minimal with this property and write $\alpha$ for the $\cM^*$-index of $F$. Then $\alpha$ is a cutpoint in $\Ult(\cM, E_\beta^\cM)$ and the $(\kappa_j,\alpha)$-extender $E$ derived from the ultrapower embedding $\pi^\cM_{E_\beta^\cM}$ is on the $\cM$-sequence by the initial segment condition. So $E_\alpha^\cM = E$ and $E$ is on the sequence of $\cM_\nu$ for some $\nu \geq \zeta$. Therefore, even if $F$ is used in the comparison of the constructions of $\bar\cM$ and $\cM$ it will not be used in the iteration from $\bar\cM_\xi$ to $\cM^*$ as the part of the coiteration using $F$ gets collapsed when moving on to the comparison with $\cM_\nu$.
    
    \begin{subcase}
      $\crit(F) = \kappa_i$.
    \end{subcase}
    
    As $\cM^* \cap \Ord < \delta_{i+1}$, there is some extender $E_\beta^\cM$ on the sequence of $\cM$ with critical point $\kappa_i$ and index $\beta$ above the $\cM^*$-index of $F$. Therefore, we can argue as in the previous subcase.

    \begin{case}
      $F$ is used as there is an extender $G$ with the same index on the corresponding model $\cM_\zeta$ in the construction of $\cM$ and $F \neq G$.
    \end{case}
    
    As $F$ and $G$ are both certified with the same critical point $\kappa_j$, $F=G$. Hence, this case cannot occur. 

    %Note that by minimality of $F$, all extenders used in the iteration from $\bar\cM$ to $\cM^*$ are fully backgrounded. \prpl{??? Why can't we use extenders with critical point above $\kappa_i$ that are not fully backgrounded?} By Steel's comparison theorem \cite{St22}, coiterating by least disagreement of backgrounded extenders not only iterates away extender disagreements between $\bar\cM$ and $\cM$ but also strategy disagreements between the models. %All extenders used in the iteration tree $\cU$ have critical point above $\kappa_i$ as $\bar\cM | \kappa_i = \cM | \kappa_i$ and none of the backgrounded extenders overlaps $\kappa_i$. % or has critical point $\kappa_i$.
    %By the proof of Lemma \ref{lem:iterability} we can find realizations and $\cM^*$ is a hull of $\cM$ and $\cM^* | \kappa_i = \cM | \kappa_i$. Recall that the extender $F$ on $\cM^*$ has critical point $\kappa_j$ for some $j < i$. So Lemma \ref{lem:characterizationhull} yields that $F$ is characterized via $\Sigma_{\cM_\infty^j}$. As a result of the comparison, not only $\cM^*$ and $\cN_\zeta$ agree below the index of $F$ but also their strategies, meaning the results of their fully backgrounded constructions relative to $\Sigma$ up to $\kappa_i$ agree (cf. \cite{St22}). This implies that $F=G$.
  \end{proof}
  
  So all in all there is an iterate $\cM^*$ of $\bar\cM$ such that the main branch from $\bar\cM$ to $\cM^*$ does not drop and if we let $i \colon \bar\cM \rightarrow \cM^*$ denote the corresponding iteration embedding, then $\cM^* = \cM_\gamma$ for some $\cM_\gamma$ in the construction of $\cM$ and the critical point of $i$ is above $\kappa_i$.
  \end{proof}

\subsection{The construction converges and the $\kappa_i$ are strong all the way} \label{subsec:constructionworks}

Now we can prove $(\mathsf{IH}.2)_{i+1}$ from $(\mathsf{IH})_i$.

\begin{lemma}\label{lem:constructionconverges}
    Let $i<\omega$ and suppose $(\mathsf{IH})_i$. Then $\cM^{(i+1)}$ is a proper class premouse, i.e., the $\cM$-construction in $\cW$ does not break down when restricting Clause \eqref{item:strongext} to extender with critical points in $\{\kappa_j \mid j \leq i\}$.
\end{lemma}

  \begin{proof}
    Let $(\cM_\xi,\cN_\xi \mid \xi < \zeta), \cN_\zeta$ be the models in the construction of $\cM^{(i+1)}$ and suppose toward a contradiction that the $\cM$-construction breaks down at stage $\zeta$. That means $\cM_\xi = \cC_\omega(\cN_{\xi})$ for $\xi < \zeta$, $\zeta$ is a successor ordinal and $\cC_\omega(\cN_\zeta)$ is not defined. Say $\cM^{(i+1)} = \cN_\zeta = (J_\alpha^{\vec E}, \in, \vec E, F)$ for some generically countably complete extender $F$ with critical point $\kappa_i$ for some $i<\omega$ such that $\delta_{i+1}<\alpha$. Let \[\pi \colon \bar\cW \rightarrow \cW|\Omega \] for some sufficiently large $\Omega > \cM^{(i+1)} \cap \Ord$ be such that letting $\crit(\pi) = \nu$, $\cW|\kappa_i \subseteq \bar\cW$, $(\kappa_j, \delta_j \mid j < \omega) \in \rng(\pi)$, the sequence of models of the construction of $\cM^{(i+1)}$ is in the range of $\pi$, $\nu \in (\kappa_i, \delta_{i+1})$ is an inaccessible cardinal in $\cW$, and $\pi(\nu) = \delta_{i+1}$. Let $\bar\cM$ be the collapse of $\cM^{(i+1)}$ and let $(\bar\kappa_j, \bar\delta_j \mid j<\omega)$ be the preimages of $(\kappa_j, \delta_j \mid j < \omega)$ (so in particular, $\bar\kappa_j = \kappa_j$ and $\bar\delta_j = \delta_j$ for $j \leq i$). By Lemma \ref{lem:CompAgainstHull} there is an iterate $\cM^*$ of $\bar\cM$ such that the main branch from $\bar\cM$ to $\cM^*$ does not drop and if we let \[i \colon \bar\cM \rightarrow \cM^*\] denote the corresponding iteration embedding, then $\cM^* = \cM_\gamma$ for some $\cM_\gamma$ in the construction of $\cM$ and the critical point of $i$ is above $\kappa_i$.
% The fact that the critical point of $i$ is above $\kappa_i$ is important here!
    But this implies that $\cC_\omega(\cM^*)$ and hence $\cC_\omega(\bar\cM)$ is defined, a contradiction.
    %Doesn't iterability automatically give that the core exists? Where does the model need to be iterable for this? Well, we only prove that hulls of $\cM$ are iterable. So formally, consider a counterexample. Take a hull $\bar\cW$ of $\cW$. Then the $\cM$-construction in $\bar\cW$ breaks down, say its last model is $\bar\cM_\xi$. But $\bar\cM_\xi$ is iterable as it is a hull of $\cM$. So $\cC_\omega(\bar\cM_\xi)$ exists, contradiction. (cf. \cite{MS94} for the argument that iterability implies solidity) Is soundness clear? Or is this why we want to argue that it iterates to an initial segment of $\cM$? Taking the core is what ensures soundness. Being sound includes being solid.
  \end{proof}

The next lemma proves $(\mathsf{IH}.3)_{i+1}$ from $(\mathsf{IH})_i$, finishing the inductive argument for $(\mathsf{IH})_{i+1}$ from $(\mathsf{IH})_i$.

\begin{lemma}\label{lem:kappaiarestrong}
  Let $(\delta_i \mid i < \omega)$ be the sequence of Woodin cardinals in $\cW$ and for each $i<\omega$, let $\kappa_i$ be the least ${<}\delta_{i+1}$-strong cardinal above $\delta_i$. Let $i<\omega$ and assume $(\mathsf{IH})_i$. Then $\kappa_j$ is a strong cardinal in $\cM^{(i+1)}$ for all $j \leq i$.
\end{lemma}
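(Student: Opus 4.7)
The plan is to argue by induction on $i$, assuming inductively that $\kappa_j$ is strong in $\cM$ for every $j<i$, so that the hypothesis of Lemma~\ref{lem:CompAgainstHull} is available at stage $i$. Fix $\lambda \geq \delta_i$; the goal is to exhibit an extender $E$ on the $\cM$-sequence with $\crit(E)=\kappa_i$ and $V_\lambda^\cM \subseteq \Ult(\cM,E)$. The argument splits at the dividing line $\delta_{i+1}$, and uses the characterization of certified extenders via the operators $S_i$ (Lemma~\ref{lem:certifiedextendersaregivenbyoperators}) together with Lemma~\ref{lem:CompAgainstHull} to place the desired extenders on the $\cM$-sequence.

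For $\lambda < \delta_{i+1}$, I would invoke directly that $\kappa_i$ is ${<}\delta_{i+1}$-strong in $\cW$. Fix an extender $F^*$ on the $\cW$-sequence with $\crit(F^*) = \kappa_i$ and $V_\lambda^\cW \subseteq \Ult(\cW,F^*)$. Since the derived extender $F$ over $\cM$ is indexed below $\delta_{i+1}$, clause \eqref{item:backgroundext} of Definition~\ref{def:translationprocedure} places $F$ on the $\cM$-sequence, yielding $\lambda$-strongness of $\kappa_i$ in $\cM$. For $\lambda \geq \delta_{i+1}$ I would instead reflect: fix $\Omega \gg \lambda$ and take the transitive collapse $\bar\cW$ of a Skolem hull of $\cW|\Omega$ containing $\cW|\kappa_i$ together with a name for the appropriate $\kappa_i$-good universal mouse over $\cM|\xi$ (cf. Lemma~\ref{lem:gooduniversalmiceexist}), of size ${<}\delta_{i+1}$. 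Let $\pi \colon \bar\cW \to \cW$ be the uncollapse and $\bar\cM$ the collapse of $\cM|\Omega$. Lemma~\ref{lem:CompAgainstHull} then provides a non-dropping iteration $\iota \colon \bar\cM \to \cM^* \unlhd \cM$ with $\crit(\iota) > \kappa_i$, and a factor map $\sigma \colon \cM^* \to \cM$ with $\pi \upharpoonright \bar\cM = \sigma \circ \iota$. The uncollapse map $k \colon S_i(\cM|\kappa_i) \to S_i(\cM|\xi)$ induced by this factorization (where $\xi$ is the least $i$-weak cutpoint above $\kappa_i$ in the relevant ultrapower) derives an extender $E$ with $\crit(E) = \kappa_i$, and by Lemma~\ref{lem:certifiedextendersaregivenbyoperators} this $E$ is certified. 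Generic countable completeness of $E$ follows as in the proof of Lemma~\ref{lem:genctblycompleteextenderscertified} (the realization $\sigma$ together with the background strategy of $\cW$ witness the required countably complete lifts); hence clause~\eqref{item:strongext} of Definition~\ref{def:translationprocedure} places $E$ on the $\cM$-sequence. By elementarity and the choice of the hull, $V_\lambda^\cM \subseteq \Ult(\cM,E)$.

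The main obstacle I anticipate is the bookkeeping required to show that the extender $E$ produced from the hull in the second case is indexed on $\cM$ exactly where clause~\eqref{item:strongext} will act — that is, at the successor of the least $i$-weak cutpoint above $\kappa_i$ of $\Ult(\cM,E)$, as dictated by the modified Jensen indexing for extenders whose critical point is a limit of Woodin cardinals. This is precisely the content that the operator $S_i$ and the hull-condition in Lemma~\ref{lem:certifiedextendersaregivenbyoperators} are designed to encode, but one must check carefully that the weak-cutpoint structure computed inside the ultrapower coming from the reflection lines up with that read off from $S_i(\cM|\xi)$. The induction hypothesis enters exactly here: strongness of the earlier $\kappa_j$ in $\cM$ is needed both to apply Lemma~\ref{lem:CompAgainstHull} and to guarantee that the weak-cutpoint computation in $\cM^*$ agrees with that in $\cM$ below $\kappa_i$.
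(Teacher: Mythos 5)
Your first case ($\lambda<\delta_{i+1}$, via clause \eqref{item:backgroundext} and the ${<}\delta_{i+1}$-strongness of $\kappa_i$ in $\cW$) is fine and is also taken for granted in the paper. The gap is in the case $\lambda\geq\delta_{i+1}$, where your direct construction cannot work as stated. An extender witnessing $\lambda$-strongness of $\kappa_i$ in $\cM$ for $\lambda\geq\delta_{i+1}$ must be indexed above $\delta_{i+1}$ and cohere $\cM$ there; but every object you produce by reflection lives in a hull of size ${<}\delta_{i+1}$, and the operators $S_i$ are only defined on premice $X\in\cW|\delta_{i+1}$, so ``$S_i(\cM|\xi)$'' is meaningless for $\xi\geq\delta_{i+1}$ and cannot be used to derive or certify an extender indexed above $\delta_{i+1}$ on the actual $\cM$-sequence. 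Moreover, Lemma \ref{lem:CompAgainstHull} gives only $\cM^*\unlhd\cM$; it does \emph{not} give a factor map $\sigma\colon\cM^*\rightarrow\cM$ with $\pi\upharpoonright\bar\cM=\sigma\circ\iota$ (the comparison map $\iota$ and the uncollapse $\pi$ are different embeddings, and $\pi$ does not factor through $\iota$ in general), so the extender you propose to derive from ``the uncollapse map induced by this factorization'' is not defined. Finally, the definition of $\xi$ as the least $i$-weak cutpoint ``in the relevant ultrapower'' is circular as written, and the concluding ``by elementarity and the choice of the hull, $V_\lambda^\cM\subseteq\Ult(\cM,E)$'' cannot hold: nothing derived from a structure of size ${<}\delta_{i+1}$ can absorb $V_\lambda^\cM$ for $\lambda\geq\delta_{i+1}$.

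The paper's proof runs in the opposite direction, by contradiction. Assuming $\kappa_i$ is not $\gamma$-strong in $\cM$ (with $i$ minimal), the \emph{failure} is reflected into $\bar\cM$, where the preimage $\bar\gamma$ of $\gamma$ is small ($<\delta_{i+1}$). Lemma \ref{lem:CompAgainstHull} yields $i\colon\bar\cM\rightarrow\cM^*\unlhd\cM$ with $\crit(i)>\kappa_i$ and $\cM^*\cap\Ord<\delta_{i+1}$; since $\kappa_i$ \emph{is} ${<}\delta_{i+1}$-strong in $\cM$, one picks an extender $F$ on the $\cM$-sequence indexed between $i(\bar\gamma)$ and $\delta_{i+1}$ and pulls it back along $i$ to an extender $E$ over $\bar\cM$. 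The bulk of the work --- entirely missing from your proposal --- is then to show that $E$ coheres $\bar\cM$ at the correct index (via the $\cP$-construction and genericity-iteration argument showing $E$ is given by the operators), that $E$ is generically countably complete, and that $E\in\bar\cW$ (using Lemma \ref{lem:extenderdefinable}); the $\cM$-construction in $\bar\cW$ must therefore have added $E$, contradicting the reflected failure of $\bar\gamma$-strongness in $\bar\cM$. You should restructure your argument around this contradiction; the direct construction of extenders above $\delta_{i+1}$ is not available.
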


\begin{proof}
  Suppose there is a $j \leq i$ such that $\kappa_j$ is not $\gamma$-strong in $\cM^{(i+1)}$ for some $\gamma$. We may assume that $\gamma$ is a cardinal. Let $j$ be minimal with this property, i.e., suppose that all cardinals $\kappa_l$ with $l<j$ are fully strong in $\cM^{(i+1)}$. By Lemma \ref{lem:constructionconverges} $\cM^{(i+1)}$ is a proper class model. We may assume $j=i$ as the argument for $j<i$ is similar. Let \[\pi \colon \bar\cW \rightarrow \cW|\Omega \] for some sufficiently large $\Omega$ be such that letting $\crit(\pi) = \nu$, $\cW|\kappa_i \subseteq \bar\cW$, $(\kappa_j, \delta_j \mid j < \omega) \in \rng(\pi)$, the sequence of models of the construction of $\cM^{(i+1)}|\Omega$ is in the range of $\pi$, $\nu \in (\kappa_i, \delta_{i+1})$ is an inaccessible cardinal in $\cW$, and $\pi(\nu) = \delta_{i+1}$. Let $\bar\cM$ be the collapse of $\cM^{(i+1)}|\Omega$ and $(\bar\kappa_j, \bar\delta_j, \bar\gamma \mid j<\omega)$ the preimages of $(\kappa_j, \delta_j, \gamma \mid j < \omega)$ (so in particular, $\bar\kappa_j = \kappa_j$ and $\bar\delta_j = \delta_j$ for $j \leq i$). By Lemma \ref{lem:CompAgainstHull} there is an iterate $\cM^*$ of $\bar\cM$ such that the main branch from $\bar\cM$ to $\cM^*$ does not drop and if we let \[ i \colon \bar\cM \rightarrow \cM^* \] denote the corresponding iteration embedding, then $\cM^* = \cM_\chi$ for some model $\cM_\chi$ in the construction of $\cM^{(i+1)}$ and the critical point of $i$ is above $\kappa_i$.
  As in the proof of Claim \ref{cl:Mdoesnotmove} in the proof of Lemma \ref{lem:CompAgainstHull} we have $\cM^* \cap \Ord < \delta_{i+1}$. Recall that $\kappa_i$ is ${<}\delta_{i+1}$-strong in $\cM^{(i+1)}$, so there are extenders on the $\cM^{(i+1)}$-sequence with arbitrarily large length $\eta$ between $i(\bar\gamma)$ and $\delta_{i+1}$. Let $E$ be an extender on the $\cM_\chi$-sequence with critical point $\kappa_i$ and length $\eta$ between $i(\bar\gamma)$ and $\delta_{i+1}$. %such that $\eta = i(\bar\eta)$ for some ordinal $\bar\eta > \bar\gamma$. 
  Now consider the extender $F$ with critical point $\kappa_i$ given by \[ (a,A) \in F \text{ iff } (i(a),A) \in E \] for $a \in [\Ord]^{<\omega}$ and $A \subseteq [\kappa_i]^{|a|}$. As the critical point of $i$ is above $\kappa_i$, we have $\cM \cap \Pot(\kappa_i) = \bar\cM \cap \Pot(\kappa_i)$ and this defines an extender $F$ over $\bar\cM$. Let $\bar\xi$ be the least $i$-weak cutpoint above $\kappa_i$ in $\Ult(\bar\cM,F)$. Recall that an ordinal $\alpha$ is an $i$-weak cutpoint above $\kappa_i$ if all extenders overlapping $\alpha$ have critical point $\kappa_j$ for some $j<i$. So $\bar\eta = (\bar\xi^+)^{\Ult(\bar\cM,F)}$ is the ordinal where $F$ would be indexed if it is on the sequence of $\bar\cM$. As in the proof of Claim \ref{cl:Mdoesnotmove} in the proof of Lemma \ref{lem:CompAgainstHull} we obtain the following claim.

  \begin{claim}
    $F$ coheres $\bar\cM | \bar\eta$ and is generically countably complete in $\bar\cW$.
   \end{claim}

Hence, $F$ must be added to $\bar\cM$, the result of an $\cM^{(i+1)}$-construction in $\bar\cW$, contradicting the fact that $\kappa_i$ is not $\bar\gamma$-strong in $\bar\cM$.
\end{proof}

As all $\delta_i$, $i<\omega$, remain Woodin in $\cM$, Lemmas \ref{lem:constructionconverges} and \ref{lem:kappaiarestrong} inductively yield that $\delta_\omega = \sup_{i<\omega} \delta_i$ is a limit of Woodin cardinals and a limit of strong cardinals in the proper class model $\cM$. This finishes the proof of Theorem \ref{thm:main}.

\bibliographystyle{plain}
\bibliography{References}

\end{document}